\title[Exact categories, triangulated adjoints and model structures]
{On exact categories and applications to triangulated adjoints and model structures}
\author{Manuel Saor\'\i{}n}
\address{
Departamento de Matem\'aticas\\
Universidad de Murcia, Aptdo. 4021\\
30100 Espinardo, Murcia\\
SPAIN}
\email{msaorinc@um.es}
\author{Jan \v{S}\v{t}ov\'\i\v{c}ek}
\address{
Charles University in Prague, Faculty of Mathematics and Physics\\
Department of Algebra \\
Sokolovska 83, 186 75 Praha 8\\
CZECH REPUBLIC
}
\email{stovicek@karlin.mff.cuni.cz}
\subjclass[2000]{18E10, 18G10 (primary), 18E30, 55U35 (secondary)}
\keywords{Exact categories, cotorsion pairs, approximations, triangulated adjoint functors, model categories}
\date{\today}
\thanks{%
The first named author is supported by research projects from the D.G.I. of the Ministerio de Ciencia e Innovaci\'{o}n and the  Fundaci\'{o}n `S\'{e}neca' of Murcia (Grupos de Excelencia), with a part of FEDER funds. Part of his contribution to the paper was done  while he was on sabbatical leave at the Universit\'{e} de Paris 7 supported by a grant from the same Ministerio. He thanks all these institutions for their support. The second named author is supported by the research project MSM~0021620839 and by the grant GA\v{C}R P201/10/P084.
}
\theoremstyle{plain}
\newtheorem{theor}{Theorem}[section]
\newtheorem{lemma}[theor]{Lemma}
\newtheorem{prop}[theor]{Proposition}
\newtheorem{cor}[theor]{Corollary}
\theoremstyle{definition}
\newtheorem{defi}[theor]{Definition}
\newtheorem{nota}[theor]{Notation}
\newtheorem{setup}[theor]{Setup}
\theoremstyle{remark}
\newtheorem{rem}[theor]{Remark}
\newtheorem{warn}[theor]{Warning}
\newtheorem{exem}[theor]{Example}
\newtheorem{exems}[theor]{Examples}
\renewcommand{\iff}{if and only if }
\newcommand{\st}{such that }
\newcommand{\wrt}{with respect to }
\def\Hom{\mathop{\rm Hom}\nolimits}
\def\stHom{\mathop{\underline{\rm Hom}}\nolimits}
\def\End {\mathop{\rm End}\nolimits}
\def\Ext {\mathop{\rm Ext}\nolimits}
\def\Ker {\mathop{\rm Ker}\nolimits}
\def\Im {\mathop{\rm Im}\nolimits}
\def\Coker {\mathop{\rm Coker}\nolimits}
\def\cf {\mathop{\rm cf}\nolimits}
\newcommand{\Mod}[1]{\hbox{\rm Mod-}{#1}}
\newcommand{\Proj}[1]{\hbox{\rm Proj-}{#1}}
\newcommand{\Flat}[1]{\hbox{\rm Flat-}{#1}}
\newcommand{\Qco}[1]{\mathfrak{Qco}({#1})}
\newcommand{\coh}[1]{\mathfrak{coh}({#1})}
\newcommand{\Vect}[1]{\hbox{\rm Vect-}{#1}}
\newcommand{\vect}[1]{\hbox{\rm vect-}{#1}}
\newcommand{\Filt}[1]{\hbox{\rm Filt-}{#1}}
\newcommand{\dg}[1]{\hbox{\rm dg-}\tilde{#1}}
\newcommand{\codg}[1]{\hbox{\rm codg-}\tilde{#1}}
\newcommand{\Iinj}{\mathcal{I}\hbox{\rm-inj}}
\newcommand{\ICells}{\mathcal{I}\hbox{\rm-Cells}}
\newcommand{\Cpx}[1]{\mathbf{C}({#1})}
\newcommand{\Cplus}[1]{\mathbf{C}^{+}({#1})}
\newcommand{\Htp}[1]{\mathbf{K}({#1})}
\newcommand{\Hplus}[1]{\mathbf{K}^+({#1})}
\newcommand{\Hpos}[1]{\mathbf{K}^{\ge 0}({#1})}
\newcommand{\Hneg}[1]{\mathbf{K}^{\le 0}({#1})}
\newcommand{\Hmock}[1]{\mathbf{K}_m({#1})}
\newcommand{\Der}[1]{\mathbf{D}({#1})}
\DeclareMathOperator{\colim}{colim}
\DeclareMathOperator{\Sum}{Sum}
\DeclareMathOperator{\Add}{Add}
\DeclareMathOperator{\Loc}{Loc}
\newcommand{\A}{\mathcal{A}}
\newcommand{\B}{\mathcal{B}}
\newcommand{\C}{\mathcal{C}}
\newcommand{\D}{\mathcal{D}}
\newcommand{\E}{\mathcal{E}}
\newcommand{\F}{\mathcal{F}}
\newcommand{\G}{\mathcal{G}}
\newcommand{\I}{\mathcal{I}}
\newcommand{\K}{\mathcal{K}}
\newcommand{\clL}{\mathcal{L}}
\newcommand{\M}{\mathcal{M}}
\newcommand{\clP}{\mathcal{P}}
\newcommand{\clS}{\mathcal{S}}
\newcommand{\T}{\mathcal{T}}
\newcommand{\U}{\mathcal{U}}
\newcommand{\V}{\mathcal{V}}
\newcommand{\W}{\mathcal{W}}
\newcommand{\X}{\mathcal{X}}
\newcommand{\Y}{\mathcal{Y}}
\newcommand{\stC}{\underline{\C}}
\newcommand{\stF}{\underline{\F}}
\newcommand{\stS}{\underline{\clS}}
\newcommand{\stT}{\underline{\T}}
\newcommand{\stX}{\underline{\X}}
\newcommand{\stY}{\underline{\Y}}
\newcommand{\bbX}{\mathbb{X}}
\newcommand{\bbZ}{\mathbb{Z}}
\newcommand{\PoneC}{{\mathbb{P}_\mathbb{C}^1}}
\newcommand{\OX}[1]{\mathcal{O}_{#1}}
\newcommand{\la}{\longrightarrow}
\newcommand{\infl}{\rightarrowtail}
\newcommand{\defl}{\twoheadrightarrow}
\newcommand{\laplus}{{\overset{+}\la}}
\newcommand{\xypushout}{\ar@{}[ul]|(.3)*\txt{$\bigstar$}}
\newcommand{\xypullback}{\ar@{}[dr]|(.3)*\txt{$\bigstar$}}
\newcommand{\xymoremargin}[1]{\xy *+{#1} \endxy}
\newcommand{\arinfl}{\ar@{ >->}}
\newcommand{\ardefl}{\ar@{->>}}
\newcommand{\smallpmatrix}[1]{
  \left(\begin{smallmatrix} #1 \end{smallmatrix}\right)
}
\begin{document}

\begin{abstract}
We show that Quillen's small object argument works for exact categories under very mild conditions. This has immediate applications to cotorsion pairs and their relation to the existence of certain triangulated adjoint functors and model structures. In particular, the interplay of different exact structures on the category of complexes of quasi-coherent sheaves leads to a streamlined and generalized version of recent results obtained by Estrada, Gillespie, Guil Asensio, Hovey, J\o{}rgensen, Neeman, Murfet, Prest, Trlifaj and possibly others.
\end{abstract}

\maketitle

\setcounter{tocdepth}{1}
\tableofcontents

\section*{Introduction}

Within the last few years, several authors presented very inventive results which at some point relied on some sort of approximation theory for exact categories. This dependency was, however, also the sticky part. Various original and elegant ideas had to carry a load of technicalities necessary to construct the approximations, a fact that can considerably hinder further development. Let us be more specific on what results we have in mind:
\begin{enumerate}
\item As explained in~\cite{Nee4}, Neeman~\cite{Nee3,Nee} and Murfet~\cite{Mur} gave a new interpretation of Grothendieck duality. At some point, they needed to construct a triangulated adjoint functor without having Brown representability (in the sense of~\cite[8.2.1]{Nee2}) at their disposal. Their workaround relied on the fact that certain classes of complexes of sheaves were precovering.

\item Following a program described in~\cite{H3}, Hovey~\cite{H1} and Gillespie~\cite{G3,G2,G1} constructed monoidal model structures on categories of complexes of sheaves. That is, the model structures are compatible with the tensor product, making, among other things, the standard properties of the derived tensor product obvious. The construction heavily relies on some cotorsion pairs being complete. See also~\cite{EGPT} for the most recent development.
\end{enumerate}

As one sees from comments like~\cite[\S8.7]{Nee2} or the paragraph before~\cite[Definition 7.8]{H3}, existence results for approximations and adjoint functors have always been viewed as something intriguing. This is despite the fact that we already have monographs like~\cite{GT} having approximations as their main topic. One reason may be that the results on approximations have not been put into a suitable context. For instance, \cite{GT} deals with modules while the papers mentioned before would preferably need the corresponding theory at least for Grothendieck categories.

In this situation, we felt that one should foster future progress by backing up the developments above by an appropriate approximation theory. Our setup is based on the intended applications---we study approximations and cotorsion pairs in exact categories which satisfy certain widespread left exactness and local smallness conditions.

Somewhat surprisingly, this approach brings ingredients which are new, or at least not well covered in the literature, also to the original setting of~\cite{GT}: approximation theory for module categories. In particular we prove that every deconstructible class of modules is precovering. Although the main tool we use for this, the so-called Quillen's small object argument, has been advocated by Hovey~\cite{H3} and actually used for constructing approximations by Rosick\'y~\cite{Ro}, its real power has not been exploited.

\smallskip

Let us briefly outline the structure of the paper. After recalling terminology and preliminary facts, we develop the technical core in Section~\ref{sec:small_obj}. What we present in Theorem~\ref{thm:orthogonality of morphisms - cotorsion pairs} and its corollary can be summarized as follows. Given a ``nice'' exact category $\C$ and a set $\clS$ of objects, then:
\begin{enumerate}
 \item The closure of $\clS$ under all transfinite extensions is a precovering class in $\C$.
 \item If either $\C$ has enough projectives or $\clS$ is a generating set, then the cotorsion pair $(\F,\T)$ generated by $\clS$ is complete and $\F$ is the smallest class which is closed under transfinite extensions and direct summands and contains $\clS$ and all projective objects.
\end{enumerate}

In Section~\ref{sec:frobenius} we establish the connection of cotorsion pairs to triangulated categories, triangulated adjoint functors, t-structures, weight structures and Bousfield localizations. Although the results presented there are hardly surprising, the section is important for having a solid background for applications.

Finally, in Section~\ref{sec:applications} we establish results directly related to intended applications to triangulated categories and model structures, and illustrate them on several examples. In particular, we give:

\begin{itemize}
\item in \S\ref{subsec:well generated categories} a handy criterion to recognize that some subcategories of the homotopy category of complexes are well generated. As an illustrative example, we look at the results of J\o{}rgensen~\cite{Jo} and Neeman~\cite{Nee3} for $\Htp{\Proj{R}}$;

\item in \S\ref{subsec:adjoints without Brown representability} criteria for existence of left and right adjoints to fully faithful triangulated functors without using Brown representability. As examples, we look among others at results by Neeman~\cite{Nee3,Nee} and Murfet~\cite{Mur};

\item in \S\ref{subsec:constructing monoidal model structures} a general condition which allows us to construct the derived category $\Der\G$ of a Grothendieck category $\G$ using ``$\F$-resolutions'' and ``$\C$-co\-res\-olutions'' of complexes. Here, $\F$ and $\C$ are suitable classes of objects of $\G$. This is closely related to the work of Gillespie~\cite{G3,G2,G1}, and Estrada, Guil, Prest and Trlifaj~\cite{EGPT}.
\end{itemize}

When finishing the work on our paper, we learned about an alternative result regarding the construction of approximations and adjoint functors due to Krause~\cite{Kr6}. We discuss briefly the relation between the two methods in Remark~\ref{rem:relation to El Bashir's result}.

\section{Preliminaries}
\label{sec:prelim}

\subsection{A little terminology from model categories}
\label{subsec:model categories}

We start with recalling some terminology from~\cite[Chapter 2]{H2} and~\cite[Chapter 10]{Hir} which is necessary for the technical core of the paper in Section~\ref{sec:small_obj}. We will also use some set-theoretic concepts, for which we refer to~\cite{Jech}.

Given a class $\I$ of morphisms in a category $\C$, a morphism $j:X\la Y$ is called a {\bf pushout of a map in $\mathcal{I}$} provided there exists a pushout
$$
\xymatrix{ A \ar[r]^{} \ar[d]_{i} & X \ar[d]^{j}
\\
B \ar[r]_{} & Y \xypushout }
$$
in $\C$ with $i\in\I$. We say that $\mathcal{I}$ is {\bf closed under pushouts} when a pushout of any morphism in $\I$ is
again in $\I$.

A direct system $[(X_\alpha)_{\alpha<\lambda},(i_{\alpha\beta})_{\alpha<\beta<\lambda})]$ indexed by an ordinal number $\lambda$ is called a {\bf $\lambda$-sequence} if, for each limit ordinal $\gamma<\lambda$, the colimit $\colim_{\alpha<\gamma}X_\alpha$ exists and  the colimit morphism $\colim_{\alpha<\gamma}X_\alpha \la X_\gamma$ is an
isomorphism. If a colimit of the whole direct system exists, we call the morphism $X_0\la \colim_{\alpha<\lambda}X_\alpha$ the {\bf composition} of the $\lambda$-sequence. If $\M$ is a class of morphisms in $\C$ and every morphism $i_{\alpha,\alpha+1}: X_\alpha \la X_{\alpha+1}$ for $\alpha+1<\lambda$ belongs to $\M$, we refer to the composition of the $\lambda$-sequence as a {\bf transfinite composition} of morphisms of $\M$.

A {\bf relative $\I$-cell complex} will stand for a transfinite composition of pushouts of maps from $\I$. If $\C$ has an initial object $0$, then an object $X$ in $\C$ is called an {\bf $\I$-cell complex} provided that the unique morphism $0 \la X$ is a relative $\I$-cell complex. The class of all $\I$-cell complexes is denoted by $\ICells$.

A morphism $p$ in $\C$ is called {\bf $\I$-injective} if it has the {\bf right lifting property} \wrt maps in $\I$: That is, given a commutative square
$$
\xymatrix{ A \ar[r]^{} \ar[d]_{i} & X \ar[d]^{p}
\\
B \ar[r]_{} & Y}
$$
in $\C$ with $i \in \I$, there exists $h: B \la X$ making both triangles commutative. The class of all $\I$-injective morphisms will be denoted by $\Iinj$.

\smallskip

In the sequel, we will need a few basic notions from~\cite[pp. 31--32]{Jech}. Given an ordinal $\lambda$, we say that an increasing sequence $(\lambda_\xi)_{\xi<\mu}$ of strictly smaller ordinals indexed by an ordinal $\mu$ is {\bf cofinal} in $\lambda$ if $\lambda = \sup_{\xi<\mu} \lambda_\xi$. The {\bf cofinality} of $\lambda$, denoted by $\cf \lambda$, is the smallest ordinal $\mu$ for which there is a cofinal sequence $(\lambda_\xi)_{\xi<\mu}$. We recall that $\cf\lambda$ is always a cardinal, see~\cite[Lemma 3.8]{Jech}.

A cardinal $\kappa$ is called is called {\bf regular} is $\kappa = \cf\kappa$, that is, $\kappa$ cannot be obtained as a sum of a collection of fewer than $\kappa$ cardinals, all of which are strictly smaller than $\kappa$. The countable cardinal $\aleph_0$ is regular (see~\cite[Example 10.1.12]{Hir}), and so is any infinite cardinal successor by~\cite[Lemma 5.8]{Jech} or~\cite[Proposition 10.1.14]{Hir}. However, there do exist cardinals which are not regular, for example $\aleph_\omega = \sup_{n<\omega} \aleph_n$.

\smallskip

In the spirit of~\cite[\S 2.1.1]{H2}, we now define what it means for an object $X$ of $\C$ to be small. Let $\kappa$ be an infinite regular cardinal and $\M$ a class of morphisms in $\C$. We say that $X$ is {\bf $\kappa$-small relative to $\M$} if for all ordinals $\lambda$ of cofinality greater than or equal to $\kappa$ and all $\lambda$-sequences
$$ Y_0 \la Y_1 \la Y_2 \la \dots \la Y_\alpha \la Y_{\alpha+1} \la \dots $$
\st the composition exists in $\C$ and $Y_\alpha \la Y_{\alpha+1}$ belongs to $\M$ for each $\alpha+1<\lambda$, the canonical morphism
$$
\colim_{\alpha <\lambda} \Hom_\C(X,Y_\alpha) \la \Hom_\C(X,\colim_{\alpha <\lambda}Y_\alpha)
$$
is an isomorphism. Finally, $X$ is said to be {\bf small relative to $\M$} if it is $\kappa$-small relative to $\M$ for some infinite regular cardinal $\kappa$.

Note that this smallness property is very natural and widespread. For instance, in an arbitrary accessible category $\C$ in the sense of Definition~\ref{def:accessible categories}, any object $X$ is small relative to the class $\M$ of all morphisms in $\C$.

\subsection{Basics on exact categories}
\label{subsec:exact categories}

The main topic of our paper is the approximation theory for exact categories $\C$. The concept of an exact category is originally due to Quillen~\cite{Q}, but the common reference for a simple axiomatic description is~\cite[Appendix A]{Kst} and an extensive treatment of the concept is also given in~\cite{Bu}. A few properties of exact categories for which we could not find a suitable reference are discussed in Appendix~\ref{sec:appendix exact catg} of this paper.

For our results, we will mostly consider exact categories meeting two additional axioms:

\begin{setup} \label{setup:nice exact category}
Let $\C$ be an exact category satisfying the following axioms:
\begin{enumerate}
\item[(Ax1)] Arbitrary transfinite compositions of inflations exist and are again inflations;
\item[(Ax2)] Every object of $\C$ is small relative to the class of all inflations.
\end{enumerate}
\end{setup}

In fact, a similar property restricted to compositions of countable chains of inflations was considered in~\cite[Appendix B]{Kst}. To convince the reader that these two conditions, although somewhat technical, are natural and very often met, we present two broad classes of examples to keep in mind:

\begin{exem} \label{expl:grothendieck category}
Let $\C$ be a Grothendieck category with the abelian exact structure. That is, we take for conflations all short exact sequences. Then it is well known that $\C$ satisfies both (Ax1) and (Ax2).
\end{exem}

\begin{exem} \label{expl:semisplit structure}
Let $\A$ be an additive category with arbitrary coproducts and define $\C = \Cpx\A$, the category of all chain complexes over $\A$:
$$
\cdots \la A^{-2} \overset{\partial^{-2}}\la
A^{-1} \overset{\partial^{-1}}\la
A^0 \overset{\partial^0}\la
A^1 \overset{\partial^1}\la A^2 \la \cdots
$$
The morphisms in $\C$ are usual chain complex morphisms and we equip $\C$ with the componentwise split exact structure. The fact that $\C$ satisfies (Ax1) of Setup~\ref{setup:nice exact category} follows from the easily verifiable fact that the same axiom holds for the additive category $\A$ itself, when taken with the split exact structure. If all objects of $\A$ are small \wrt the class of all split monomorphisms, such as if $\A$ is an accessible category in the sense of Definition~\ref{def:accessible categories} later on (see also~\cite[Chapter 2]{AR}), then $\C$ satisfies (Ax2) as well. This holds in particular if $\A = \Mod{R}$ for some ring $R$ or, more generally, if $\A$ is a Grothendieck category.

More generally, if $\A$ is an exact category satisfying (Ax1) and (Ax2), then the same is true for the category $\Cpx\A$ of chain complexes over $\A$, with conflations defined as the sequences of complexes which are conflations in $\A$ in each component.
\end{exem}

The following helpful lemma shows that exact categories satisfying (Ax1) always have arbitrary coproducts and these are exact:

\begin{lemma} \label{lem:existence of exact coproducts}
The following hold for any exact category $\C$ satisfying \emph{(Ax1)} of Setup~\ref{setup:nice exact category}:
\begin{enumerate}
    \item The category $\C$ has arbitrary coproducts.
    \item Coproducts of conflations are conflations.
\end{enumerate}
\end{lemma}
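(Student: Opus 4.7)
The plan is to produce both coproducts and the conflation property by well-ordering the index set and constructing the coproduct as the composition of a $\lambda$-sequence of partial coproducts. The only non-formal ingredient is axiom (Ax1): at successor ordinals one adjoins a single summand through the split inflation $Y \infl Y \oplus X$ (the split sequence $Y \infl Y\oplus X \defl X$ is a conflation in any exact structure), so the tower of partial coproducts has all successor transitions inflations, and (Ax1) supplies the colimits at limit ordinals together with the fact that the resulting transfinite composition is itself an inflation out of $Y_0 = 0$.

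For part (1), fix a bijection of the index set $I$ with some ordinal $\lambda$ and re-index as $(X_\alpha)_{\alpha<\lambda}$. Define $Y_0 = 0$, set $Y_{\beta+1} = Y_\beta \oplus X_\beta$ with transition the canonical split inflation $Y_\beta \infl Y_{\beta+1}$, and take $Y_\gamma = \colim_{\beta<\gamma} Y_\beta$ at limit ordinals, as provided by (Ax1). Let $Y = \colim_{\beta<\lambda} Y_\beta$; the structural morphism $u_\alpha\colon X_\alpha \la Y$ is the composite $X_\alpha \la Y_{\alpha+1} \la Y$. Given a cocone $(f_\alpha\colon X_\alpha \la Z)_{\alpha<\lambda}$, a parallel transfinite induction produces the unique factorization $f\colon Y \la Z$: $g_0 = 0$, $g_{\beta+1} = (g_\beta, f_\beta)\colon Y_\beta\oplus X_\beta \la Z$, and $g_\gamma$ induced by the universal property of $Y_\gamma$ at limit ordinals. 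Uniqueness at each stage is automatic and yields the uniqueness of $f$.

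For part (2), run the same construction simultaneously on $(A_i)$, $(B_i)$, $(C_i)$. At every $\beta<\lambda$ one obtains a conflation of partial coproducts
$$
Y_\beta^A \infl Y_\beta^B \defl Y_\beta^C,
$$
since finite direct sums of conflations are conflations in any exact category, and the transition maps to the next stage are componentwise split inflations. By (Ax1) the limit map $Y^A \infl Y^B$ is an inflation; let $K$ be its cokernel. The composites $Y_\beta^A \la Y^B \la K$ vanish, so they induce a compatible family $Y_\beta^C \la K$ and thus a comparison morphism $Y^C \la K$, whose inverse is built from the universal property of $Y^C$. The crux of the argument, and its main obstacle, is this last step: that a $\lambda$-sequence of conflations with inflation transitions (componentwise) has conflation colimit, i.e.\ the cokernel commutes with the transfinite colimit. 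This is the sole point where one really uses the exact structure beyond formal manipulation; it is precisely the kind of standard fact about exact categories satisfying (Ax1) that is verified in Appendix~\ref{sec:appendix exact catg}, and once it is in hand the lemma is assembled.
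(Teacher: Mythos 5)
Your proof of part (1) is essentially the paper's: well-order the index set, build the $\lambda$-sequence of partial coproducts through split inflations at successors, and invoke (Ax1) at limit steps; the verification of the universal property is exactly as you describe.

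Part (2), however, has a genuine gap, and you have in fact put your finger on it yourself: the claim that a $\lambda$-sequence of conflations with (componentwise) inflation transition maps has a conflation as its colimit does \emph{not} follow from (Ax1), nor is it verified in Appendix~\ref{sec:appendix exact catg}. Axiom (Ax1) only asserts that a transfinite composition of inflations --- a single chain $Y_0 \infl Y_1 \infl \cdots$ --- is an inflation; it says nothing about the induced map on colimits of a \emph{ladder} of inflations $Y^A_\beta \infl Y^B_\beta$ being an inflation. (Proposition~\ref{prop:exact category with enough injectives} in the appendix does prove exactness of coproducts, but only under the additional hypotheses of enough injectives and existence of cokernels of sections, which are not available here.) Moreover, your intermediate assertion that $Y^A_\beta \infl Y^B_\beta \defl Y^C_\beta$ is a conflation for every $\beta<\lambda$ is already unjustified at limit ordinals $\beta$: there the partial coproducts are not finite direct sums but transfinite colimits, so the assertion is precisely an instance of the statement being proved. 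The paper circumvents all of this by a different, interleaving construction: it sets $X_\alpha = (\coprod_{\beta<\alpha} B_\beta)\oplus(\coprod_{\beta\ge\alpha} A_\beta)$, so that each successor step $X_\alpha \infl X_{\alpha+1}$ replaces the single summand $A_\alpha$ by $B_\alpha$ via $i_\alpha$ (an inflation, being the direct sum of $i_\alpha$ with an identity), and the map $\coprod i_\alpha : \coprod A_\alpha \la \coprod B_\alpha$ is then \emph{literally} a transfinite composition of inflations, to which (Ax1) applies directly; the identification of its cokernel with $\coprod C_\alpha$ is a purely formal colimit computation. You should replace your ladder argument by this interleaving.
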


\begin{proof}
(1) Suppose we are given a family $(A_\alpha)_{\alpha\in K}$ of objects of $\C$. We can assume that $K=\lambda$ is an ordinal. Then the coproduct $\coprod_{\alpha<\lambda} A_\alpha$ can be constructed as the codomain of the composition of a $\lambda$-sequence. Namely, we inductively construct $[(X_\alpha)_{\alpha<\lambda},(i_{\alpha\beta})_{\alpha <\beta<\lambda}]$ \st for each $\alpha+1<\lambda$ we have $X_{\alpha+1} = X_\alpha \oplus A_\alpha$ and $i_{\alpha,\alpha+1}: X_\alpha \infl X_{\alpha+1}$ is the canonical split morphism. Here we use (Ax1) for limit steps.

(2) Suppose $(i_\alpha: A_\alpha\infl B_\alpha)_{\alpha\in K}$ is a family of inflations and assume again that $K=\lambda$ is an ordinal. Let us put $X_\alpha =(\coprod_{\beta<\alpha} B_\beta) \oplus (\coprod_{\beta\geq\alpha}A_\beta)$ for each $\alpha\le\lambda$. Note that we have $X_{\alpha+1}=(\coprod_{\beta<\alpha}B_\beta)\oplus B_\alpha \oplus (\coprod_{\beta\geq\alpha+1}A_\beta)$, so that we have an obvious inflation $j_{\alpha,\alpha+1}:X_\alpha\infl X_{\alpha+1}$. It is straightforward to inductively extend this data to a $\lambda$-sequence $[(X_\alpha)_{\alpha<\lambda},(j_{\alpha\beta})_{\alpha <\beta<\lambda})]$ and to check that $\coprod i_\alpha: \coprod_{\alpha<\lambda}A_\alpha \la \coprod_{\alpha<\lambda} B_\alpha$ is a transfinite composition of the inflations $j_{\alpha,\alpha+1}$, hence itself an inflation by (Ax1).
\end{proof}

\smallskip

Finally, we set up appropriate definitions for projective and injective objects and for generators. We call an object $I$ of an
exact category  $\C$ {\bf injective} if any inflation $i: I\infl Y$ splits. Equivalently, $\Ext^1_\C(Z,I) = 0$ for each $Z \in \C$. Here, given arbitrary objects $X$ and $Y$ in $\C$, we denote by $\Ext_\C^1(X,Y)$ the corresponding version of the {\bf Yoneda Ext}, that is, the abelian group of equivalence classes of conflations $0\rightarrow Y\longrightarrow Z\longrightarrow X\rightarrow 0$. We refer to~\cite[\S\S XII.4 and 5]{McL} for basic properties. We say that $\C$ has {\bf enough injectives} provided that for every object $X$, there is an inflation $X\infl I$ with $I$ injective. The terms {\bf projective object} and the existence of {\bf enough projectives} are defined dually.

A class $\G$ of objects of $\C$ is said to be {\bf generating} or a {\bf class of generators} of $\C$ if for
any object $M\in\C$ there is a deflation $\pi: G\defl M$, where $G$ is a (set-indexed) coproduct of objects from $\G$. An object $G \in \C$ is called a {\bf generator} if $\{G\}$ is generating.

\begin{rem} \label{rem:on projectives and generators, case I}
If $\C$ is a Grothendieck category with the abelian exact structure (Example~\ref{expl:grothendieck category}), it must have a generator by the very definition, and it is well known that $\C$ has enough injectives. On the other hand, there are several examples coming from algebraic geometry without enough projectives.
\end{rem}

\begin{rem} \label{rem:on projectives and generators, case II}
Assume now $\C = \Cpx\A$ is the category of complexes over $\A$ with the componentwise split exact structure as in Example~\ref{expl:semisplit structure}. Then $\C$ always has enough projectives and injectives by~\cite[\S I.3]{Ha}, but often no generating set. To be more specific, if $\A = \Mod{R}$ for a ring $R$, it is not hard to see that the existence of a generating set $\G$ of $\C$ forces the existence of a module $X \in \Mod{R}$ such that $\Mod{R}=\Add X$. In such a case $R$ is necessarily a right pure-semisimple ring (cf. \cite[Proposition 2.2]{St}). This is a very restrictive condition on $R$, for example $R = \bbZ$ does not satisfy it.
\end{rem}

\begin{rem}
By Proposition~\ref{prop:exact category with enough injectives}, an exact category $\C$ satisfies (Ax1) provided it has enough injectives, it has transfinite compositions of inflations, and it has the property that each section has a cokernel.
\end{rem}

\subsection{Approximations and cotorsion pairs}
\label{subsec:approx}

For homological algebra, it is crucial that certain classes provide for approximations, the classes of projective and injective modules giving the most well known examples. Let us introduce the necessary terminology here.

Given a class $\F$ of objects of a category $\C$, we say that a morphism $f: F \la X$ is an {\bf $\F$-precover} if $F \in \F$ and any other morphism $f': F' \la X$ with $F' \in \F$ factors through $f$. Sometimes, $\F$-precovers are also called right $\F$-approximations or weak $\F$-coreflections. The class $\F$ is called {\bf precovering} if each object $X$ of $\C$ admits an $\F$-precover $f: F \la X$. The notions of an {\bf $\F$-preenvelope} (also known as a left $\F$-approximation or a weak $\F$-reflection) and a {\bf preenveloping class} are defined dually.

In connection with approximations, it has proved useful to consider so called cotorsion pairs---see for instance~\cite{BEE,GT}. This concept, originating in a more than three decade old work of Salce~\cite{Sal}, generalizes in a straightforward way from module categories to an exact category $\C$. Given a class $\clS$ of objects of $\C$, we shall denote by $\clS^{\perp_1}$ the class of objects $\{Y \mid \Ext_\C^1(S,Y) = 0 \textrm{ for all } S \in \clS \}$. Dually, ${^{\perp_1}\clS}$ stands for the class $\{X \mid \Ext_\C^1(X,S) = 0 \textrm{ for all } S \in \clS \}$.

We call a pair $(\X,\Y)$ of full subcategories of $\C$ a {\bf cotorsion pair} provided that $\X^{\perp_1}=\Y$ and $\X={^{\perp_1}\Y}$. If $\clS$ is any class
of objects, then $(^{\perp_1}(\clS^{\perp_1}),\clS^{\perp_1})$ is always a cotorsion pair, called the {\bf cotorsion pair generated}%
\footnote{In the literature this is sometimes called the cotorsion pair cogenerated by $\clS$; then the cotorsion pair generated by $\clS$ is $(^{\perp_1}\clS,(^{\perp_1}\clS)^{\perp_1})$. Here, however, we use the terminology from~\cite{GT}.}%
{\bf by $\clS$}. A cotorsion pair $(\X,\Y)$ is called {\bf complete}
if for every object $M\in\C$ we have two conflations, usually called
{\bf approximation sequences}, of the form
$$
0\to Y\la X\la M\to 0
\qquad \textrm{and} \qquad
0\to M\la Y'\la X'\to 0
$$
with $X,X'\in\mathcal{X}$ and $Y,Y'\in\mathcal{Y}$. As one can readily check, the deflation $X \defl M$ is necessarily an $\X$-precover while the inflation $M \infl Y'$ is a $\Y$-preenvelope.

We will show in the following section that there is a general
construction for precovers, preenvelopes and complete cotorsion
pairs, which is important for applications.

\section{Exploiting the small object argument}
\label{sec:small_obj}

We start with Quillen's famous small object argument. We state it in a form which is based on Hovey's presentation in~\cite{H2}. Note that if $\C$ is an exact category as in Setup~\ref{setup:nice exact category} and $\M$ is the class of all inflations, then the assumptions below are satisfied for any subset $\I \subseteq \M$.

\begin{prop}[Quillen's small object argument] \label{prop:small object argument}
Let $\C$ be an arbitrary category and $\M$ be a class of morphisms satisfying the following
properties:

\begin{enumerate}
\item Arbitrary pushouts of morphisms in $\M$ exist and belong again to $\M$.
\item Arbitrary coproducts of morphisms in $\M$ exist and belong again to $\M$.
\item Arbitrary transfinite compositions of maps in $\M$ exist and belong to $\M$.
\end{enumerate}
Suppose that $\I \subseteq \M$ is a set of maps such that, for every $i:A\la B$ in $\I$, the domain $A$ is small with respect to relative $\mathcal{I}$-cell complexes.
Then every morphism $f:X\la Y$ in $\C$ admits a factorization $f=pj$, where $j$ is a relative $\I$-cell complex and $p$
is $\I$-injective.
\end{prop}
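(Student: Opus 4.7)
My plan is to run Quillen's classical transfinite induction argument in this axiomatic setting. First I would pick an infinite regular cardinal $\kappa$ large enough that every domain $A$ of a morphism $i:A\la B$ in the set $\I$ is $\kappa$-small with respect to relative $\I$-cell complexes; such a $\kappa$ exists because $\I$ is a set and every successor of an infinite cardinal is regular.

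I would then construct, by transfinite recursion on $\alpha<\kappa$, a $\kappa$-sequence $(Z_{\alpha})_{\alpha<\kappa}$ together with morphisms $p_{\alpha}:Z_{\alpha}\la Y$, starting from $Z_{0}=X$ and $p_{0}=f$. At each successor $\alpha+1<\kappa$, form the set $S_{\alpha}$ of all commutative squares
$$
\xymatrix{ A_{s}\ar[r]^{g_{s}}\ar[d]_{i_{s}} & Z_{\alpha}\ar[d]^{p_{\alpha}} \\ B_{s}\ar[r]_{h_{s}} & Y }
$$
with $i_{s}\in\I$, construct the coproduct morphism $\coprod_{s}i_{s}$ (available by (2)), and define $Z_{\alpha+1}$ to be the pushout along $\coprod_{s}g_{s}$ (available by (1)); the map $p_{\alpha+1}:Z_{\alpha+1}\la Y$ is induced by the universal property from the maps $h_{s}$. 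At a limit $\gamma<\kappa$ I set $Z_{\gamma}=\colim_{\alpha<\gamma}Z_{\alpha}$, which exists by (3), and let $p_{\gamma}$ be induced. Finally put $Z=\colim_{\alpha<\kappa}Z_{\alpha}$, let $j:X\la Z$ be the composition of the $\kappa$-sequence, and let $p:Z\la Y$ be the induced morphism; by construction $f=pj$. To see that $j$ is a relative $\I$-cell complex, I would well-order each $S_{\alpha}$ and realize $Z_{\alpha}\la Z_{\alpha+1}$ as a further transfinite composition of pushouts of the individual maps $i_{s}\in\I$; concatenating these across $\alpha<\kappa$ presents $j$ as a single transfinite composition of pushouts of maps from $\I$.

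The heart of the proof is showing that $p$ is $\I$-injective. Given a lifting problem with $i:A\la B$ in $\I$, a morphism $g:A\la Z$ on top and $h:B\la Y$ below, the smallness hypothesis---applied to the $\kappa$-sequence $(Z_{\alpha})_{\alpha<\kappa}$, whose bonding maps are by the previous step relative $\I$-cell complexes---ensures that $g$ factors through some $g_{\alpha}:A\la Z_{\alpha}$ with $\alpha<\kappa$. The triple $(i,g_{\alpha},h)$ then belongs to $S_{\alpha}$, and the construction at stage $\alpha+1$ has explicitly produced a morphism $B\la Z_{\alpha+1}$ compatible with $g_{\alpha}$ and $h$; composing with the canonical $Z_{\alpha+1}\la Z$ yields the required diagonal $B\la Z$.

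The main technical obstacle I anticipate is the bookkeeping needed to make the smallness hypothesis literally applicable: one must verify that each $Z_{\alpha}\la Z_{\alpha+1}$ is a \emph{relative $\I$-cell complex}, not merely a morphism in $\M$, so that the $\kappa$-sequence qualifies as input to the smallness definition for each domain $A$. This is handled by the well-ordering argument above, after which the remainder is a routine application of transfinite induction and the universal properties of pushouts and colimits.
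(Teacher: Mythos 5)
Your proposal is correct and is precisely the classical Quillen/Hovey argument that the paper itself invokes (its proof consists of citing \cite[Theorem 2.1.14]{H2} and \cite[Proposition 10.5.16]{Hir}); in particular you correctly identify and handle the one point that needs care in this axiomatic setting, namely that each step $Z_\alpha\la Z_{\alpha+1}$ must be exhibited as a relative $\I$-cell complex (via individual pushouts, which exist by hypotheses (1)--(3)) so that the smallness hypothesis applies to the $\kappa$-sequence.
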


\begin{proof}
Exactly the same argument as for \cite[Theorem 2.1.14]{H2} or \cite[Proposition 10.5.16]{Hir} applies.
\end{proof}

Now, we can start with preparations to employ the small object argument. We state an auxiliary lemma first.

\begin{lemma} \label{lem:auxiliary lemma}
Let $\C$ be as in Setup~\ref{setup:nice exact category} and $\I$ be a set of inflations. The following assertions
hold:

\begin{enumerate}
\item The class of relative $\mathcal{I}$-cell complexes is closed under taking pushouts and coproducts.
\item If $p$ is a deflation in $\Iinj$, then the map $\Ker(p)\longrightarrow 0$ belongs to $\Iinj$.
\end{enumerate}
\end{lemma}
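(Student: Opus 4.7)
My plan is to handle the two parts separately, since they rely on different ideas: part~(1) is a bookkeeping exercise with transfinite sequences and pasting of pushouts, while part~(2) is a short diagram chase using the universal property of the kernel of a deflation.

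For part~(1), I begin with closure under pushouts. Given a relative $\I$-cell complex $j\colon X\la Y$ presented by a $\lambda$-sequence $[(X_\alpha)_{\alpha\le\lambda},(i_{\alpha\beta})]$ with $X_0=X$, $X_\lambda=Y$, and a morphism $f\colon X\la X'$, I will construct a $\lambda$-sequence $(X'_\alpha)_{\alpha\le\lambda}$ with $X'_0=X'$. At successor stages $X'_{\alpha+1}$ is the pushout of $i_{\alpha,\alpha+1}\colon X_\alpha\infl X_{\alpha+1}$ along the canonical map $X_\alpha\la X'_\alpha$; by the pasting lemma for pushouts, $X'_\alpha\la X'_{\alpha+1}$ is itself a pushout of the same map in $\I$ that $i_{\alpha,\alpha+1}$ was, so it lies in the class of pushouts of $\I$. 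At limit stages I set $X'_\gamma:=\colim_{\alpha<\gamma}X'_\alpha$; this exists by~(Ax1), since each $X'_\alpha\la X'_{\alpha+1}$ is an inflation. Because colimits commute with pushouts, the codomain of the resulting transfinite composition is the pushout of $j$ along $f$, proving closure under pushouts. For closure under coproducts of a family $(j_\gamma\colon X_\gamma\la Y_\gamma)_{\gamma<\mu}$, I will iterate this: put $Z_\gamma=(\coprod_{\delta<\gamma}Y_\delta)\oplus(\coprod_{\delta\ge\gamma}X_\delta)$ (which makes sense by Lemma~\ref{lem:existence of exact coproducts}) and build a $\mu$-sequence in which the successor step $Z_\gamma\la Z_{\gamma+1}$ is the pushout of $j_\gamma$ along the coproduct injection $X_\gamma\la Z_\gamma$. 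By the pushout case, each such step is a relative $\I$-cell complex, and the limit stages coincide with $Z_\gamma$ because coproducts commute with filtered colimits. Finally I invoke the standard fact that a transfinite composition of relative $\I$-cell complexes is again a relative $\I$-cell complex (just concatenate the constituent $\lambda_\gamma$-sequences), which yields the coproduct $\coprod j_\gamma$.

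For part~(2), assume $p\colon X\defl Y$ is a deflation in $\Iinj$, and let $K:=\Ker p$. Since $p$ is a deflation it fits into a conflation $K\infl X\defl Y$, so $K\la X$ is the categorical kernel of $p$. To test that $K\la 0$ is $\I$-injective, I take any $i\colon A\infl B$ in $\I$ and any morphism $a\colon A\la K$; composing with the inclusion $K\la X$ gives $a'\colon A\la X$, and the resulting square
\[
\xymatrix{ A \ar[r]^{a'} \ar[d]_{i} & X \ar[d]^{p} \\
B \ar[r]_{0} & Y }
\]
commutes because $pa'=0$. Using the $\I$-injectivity of $p$ I obtain a lift $h\colon B\la X$ with $hi=a'$ and $ph=0$. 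The universal property of $K=\Ker p$ factors $h$ uniquely as $h=(K\la X)\circ h'$ for some $h'\colon B\la K$, and a second application of the kernel's universal property (applied to $hi=a'$ and to $a$) gives $h'i=a$, as required.

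The easy verifications are the diagram chases; the real bookkeeping lies in part~(1), and specifically in the coproduct case, where one must juggle the inductive construction, the colimit identifications at limit stages, and the reduction to successive pushouts. Once the pushout case is in place and one appeals to (Ax1) to guarantee that all the needed transfinite compositions exist and are inflations, the coproduct case falls out as an iteration.
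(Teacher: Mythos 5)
Your proof is correct and follows essentially the same route as the paper: for (1) the transfinite induction pushing out each successor step and the reduction of the coproduct to the $Z_\gamma$-sequence of successive pushouts (which is exactly the paper's construction from Lemma~\ref{lem:existence of exact coproducts}(2), concluded via the closure of relative $\I$-cell complexes under transfinite composition), and for (2) the same lifting-through-the-kernel diagram chase, with your ``second application of the kernel's universal property'' playing the role of the paper's appeal to $j$ being a monomorphism.
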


\begin{proof}
(1) This is very similar to the proof of \cite[Lemma 2.1.13]{H2}. Any relative $\I$-cell complex $i:A\la B$ is a composition of a $\lambda$-sequence $[(B_\alpha)_{\alpha<\lambda},(i_{\alpha\beta})_{\alpha <\beta <\lambda}]$ \st all $B_\alpha \infl B_{\alpha+1}$ are pushouts of morphisms from $\I$. If $u_0:A\la X$ is any morphism, we construct by transfinite induction pushouts
$$
\xymatrix{
B_\alpha \ar[d]_{u_\alpha} \ar[r]^{i_{\alpha,\alpha+1}} &
B_{\alpha+1}  \ar[d]^{u_{\alpha +1}}
\\
Y_\alpha \ar[r]_{j_{\alpha,\alpha+1}} &
Y_{\alpha+1} \xypushout
}
$$
Clearly $[(Y_\alpha)_{\alpha<\lambda},(j_{\alpha\beta})_{\alpha <\beta <\lambda}]$ is a $\lambda$-sequence whose composition is the pushout of $i$ along $u_0$.

In order to prove that the class of relative $\I$-cell complexes is closed under coproducts, suppose we are given such a family $(i_\alpha: A_\alpha\infl B_\alpha)_{\alpha<\lambda}$. Then we construct a $\lambda$-sequence $[(X_\alpha)_{\alpha<\lambda},(j_{\alpha\beta})_{\alpha <\beta<\lambda})]$ exactly as in the proof of Lemma \ref{lem:existence of exact coproducts}(2). It is easy to see that each $X_\alpha \la X_{\alpha+1}$ is a pushout of $i_\alpha: A_\alpha \la B_\alpha$ along the split inclusion $A_\alpha \infl X_\alpha$. It follows by the first part that $\coprod i_\alpha: \coprod A_\alpha \la \coprod B_\alpha$ is a transfinite composition of relative $\I$-cell complexes, hence $\coprod i_\alpha$ is a relative $\I$-cell complex itself by~\cite[2.1.12]{H2}.

(2). Let $p:X\twoheadrightarrow Y$ be a deflation in $\Iinj$. Then the conflation $0 \to \Ker(p) \longrightarrow X \longrightarrow Y \to 0$ gives rise to the pullback diagram:
$$
\xymatrix{
\Ker(p) \ar[r]^j \ar[d] \xypullback &
X \ar[d]^p
\\
0 \ar[r] &
Y
}
$$
We need to prove that if $i: A \la B$ is a morphism in $\I$ and $f: A \la \Ker(p)$ is any morphism, then there is a morphism $g: B \la \Ker(p)$ \st $gi = f$. To see that, note that the fact that $p \in \Iinj$ implies the existence of a morphism $h: B \la X$ \st $hi = jf$ and $ph = 0$. By the pullback property, we get a unique morphism $g: B \la \Ker(p)$ \st $jg = h$, so that $jgi = jf$. Hence $gi = f$ since $j$ is a monomorphism. The latter is an instance of a more general fact, namely that $\Iinj$ is closed under taking pullbacks.
\end{proof}

Next, we make definitions inspired by~\cite{G1}, which will be very useful in our study of cotorsion pairs in exact categories.

\begin{defi} \label{def:homological set of inflations}
Let $\C$ be an arbitrary exact category and let $\I$ be a set of inflations. We denote by $\Coker(\I)$ the class of objects isomorphic to $\Coker(i)$,
where $i$ runs over elements of $\I$. We shall say that

\begin{enumerate}
\item $\I$ is {\bf homological} if the following two conditions are equivalent for any object $T\in\C$:

\begin{enumerate}
\item $\Ext_\C^1(S,T)=0$ for all $S\in\Coker(\I)$;
\item The map $T\la 0$ belongs to $\mathcal{I}$-inj (that is, the map $\Hom_\C(i,T): \Hom_\C(B,T) \la \Hom_\C(A,T)$ is surjective,
for all $i: A\la B$ in $\I$).
\end{enumerate}

\smallskip

\item $\mathcal{I}$ is {\bf strongly homological} if given any inflation $j:A\rightarrowtail B$ whose cokernel $S$ belongs to $\Coker(\mathcal{I})$, there is a morphism $i:A'\rightarrowtail B'$ in $\mathcal{I}$ giving rise to a commutative diagram with conflations in rows:
$$
\xymatrix{
0 \ar[r] &
A' \ar[d] \ar[r]^i &
B' \ar[d] \ar[r] &
S \ar@{=}[d] \ar[r] &
0
\\
0 \ar[r] &
A \ar[r]^j &
B \ar[r] &
S \ar[r] &
0
}
$$
\end{enumerate}
\end{defi}

\begin{rem} \label{rem:str.homological implies small Ext}
Note that given a set of objects $\clS$, the existence of a strongly homological set of inflations $\I$ \st $\Coker(\I) = \clS$ is equivalent to the following condition: For each object $S \in \clS$ there is a family of objects $(A_j)_{j \in J}$ together with an epimorphism of functors $\coprod_{j \in J} \Hom_\C(A_j,-) \la \Ext_\C^1(S,-)$. In particular, $\Ext_\C^1(S,Y)$ is a set rather than a proper class for each $S \in \clS$ and $Y \in \C$. Although having ``set-sized'' extension spaces is a very natural property for an exact category, it does not come for free, see~\cite[Exercise 1, p. 131]{Fr}.
\end{rem}

The terminology of Definition~\ref{def:homological set of inflations} is justified by the following simple lemma:

\begin{lemma} \label{lem:homological sets}
Given any exact category $\C$, a strongly homological set of inflations is always homological.
\end{lemma}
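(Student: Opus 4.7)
The plan is to prove both implications in the definition of a homological set of inflations, observing that only the direction (b) $\Rightarrow$ (a) genuinely uses the strong homologicity.

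For (a) $\Rightarrow$ (b), suppose $\Ext_\C^1(S,T)=0$ for every $S\in\Coker(\I)$ and fix $i:A\infl B$ in $\I$ with cokernel $S$, together with an arbitrary morphism $f:A\la T$. Forming the pushout of $i$ along $f$ produces a conflation $0\to T\infl Z\defl S\to 0$ (a standard axiom of an exact category), which by hypothesis represents the zero class in $\Ext_\C^1(S,T)$ and therefore splits. Composing a retraction $Z\la T$ of the inflation $T\infl Z$ with the pushout morphism $B\la Z$ yields a map $B\la T$ whose restriction along $i$ equals $f$, so the map $T\la 0$ belongs to $\Iinj$.

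For the harder direction (b) $\Rightarrow$ (a), assume $T\la 0$ is $\I$-injective and fix $S\in\Coker(\I)$ together with any conflation $\eta\colon 0\to T\overset{j}\infl Z\overset{\pi'}\defl S\to 0$. Strong homologicity supplies a morphism $i:A'\infl B'$ in $\I$, with the same cokernel $S$ via a deflation $\pi:B'\defl S$, together with a morphism of conflations $(\phi,\psi,\mathrm{id}_S)$ from the top row to $\eta$. By hypothesis (b), the morphism $\phi:A'\la T$ lifts to some $\tilde\phi:B'\la T$ with $\tilde\phi\circ i=\phi$.

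A section of $\pi'$ is then constructed from $\tilde\phi$. The morphism $\psi-j\tilde\phi:B'\la Z$ satisfies $(\psi-j\tilde\phi)\circ i=\psi i-j\phi=0$ by commutativity of the left square, so it factors through the cokernel $\pi$ of $i$, giving $\sigma:S\la Z$ with $\sigma\pi=\psi-j\tilde\phi$. Postcomposing with $\pi'$ and using $\pi'\psi=\pi$ together with $\pi'j=0$ yields $\pi'\sigma\pi=\pi$, whence $\pi'\sigma=\mathrm{id}_S$ because $\pi$ is epic as a deflation. Thus $\eta$ splits and $\Ext_\C^1(S,T)=0$. The only delicate point is this final epi-cancellation; everything else is routine diagram chasing in an exact category.
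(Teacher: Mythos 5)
Your proof is correct and follows essentially the same route as the paper: use the strongly homological hypothesis to produce a morphism of conflations $(\phi,\psi,\mathrm{id}_S)$ from a member of $\I$ onto the given extension, lift $\phi$ through $i$ via the $\I$-injectivity of $T\la 0$, and conclude that the extension splits. The only (harmless) difference is in the last step: the paper observes that the left-hand square is a pushout (\cite[Proposition 2.12]{Bu}) and splits the inflation $j$ directly, whereas you split the deflation $\pi'$ by factoring $\psi-j\tilde\phi$ through the cokernel $\pi$ of $i$ and cancelling the epimorphism $\pi$ --- both are standard and valid.
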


\begin{proof}
The implication (a) $\implies$ (b) in Definition~\ref{def:homological set of inflations} is always satisfied, so we only have to prove (b) $\implies$ (a). Let $\I$ be a strongly homological set of inflations and let $T$ be an object such that $T\longrightarrow 0$ belongs to $\Iinj$. Let us fix $S\in \Coker(\I)$ and let $\varepsilon\in \Ext_\C^1(S,T)$ be an extension represented by the conflation $0 \to T \overset{j}\la X \la S \to 0.$ By the assumption on $\I$, we get a commutative diagram with conflations in rows and $i\in\I$:
$$
\xymatrix{
&
0 \ar[r] &
A' \ar[d]_f \ar[r]^i &
B' \ar[d] \ar[r] &
S \ar@{=}[d] \ar[r] &
0 &
\\
\varepsilon: &
0 \ar[r] &
T \ar[r]^j &
X \ar[r] &
S \ar[r] &
0 &
}
$$
The left hand square is a pushout by~\cite[Proposition 2.12]{Bu}. Since $T \la 0$ is $\I$-injective, $f$ factors through $i$ and $\varepsilon$ splits. Hence $\Ext_\C^1(S,T) = 0$ for each $S \in \Coker(\I)$.
\end{proof}

In order to obtain reasonable properties, we now add one more axiom to Setup~\ref{setup:nice exact category}:

\begin{defi} \label{def:efficient exact category}
An exact category $\C$ will be called {\bf efficient} if it satisfies the following conditions:
\begin{enumerate}
\item[(Ax1)] Arbitrary transfinite compositions of inflations exist and are again inflations;
\item[(Ax2)] Every object of $\C$ is small relative to the class of all inflations;
\item[(Ax3)] For each set of objects $\clS$, there is a strongly homological set of inflations $\I$ \st  $\Coker(\I)$ is the class of objects isomorphic to objects of $\clS$.
\end{enumerate}
Here, (Ax1) and (Ax2) are precisely the conditions of Setup~\ref{setup:nice exact category}.
\end{defi}

A crucial observation is that many natural exact categories are indeed efficient
(see Examples~\ref{expl:efficient exact categories} below).

\begin{prop} \label{prop:examples of strongly homological sets of maps}
Let $\C$ be an exact category \st

\begin{enumerate}
\item $\C$ has enough projectives, or
\item $\C$ has a set of generators, arbitrary coproducts, and every section in $\C$ has a cokernel.
\end{enumerate}
Then, for each set of objects $\clS$, there is a strongly homological set of inflations $\I$ such that $\Coker(\I)$ is the class of objects isomorphic to objects of $\clS$. In particular, if $\C$ satisfies \emph{(Ax1)} and \emph{(Ax2)} of Definition~\ref{def:efficient exact category}, then it is an efficient exact category.
\end{prop}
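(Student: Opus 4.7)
The plan is to construct $\I$ case by case, handling the two hypotheses in parallel but with noticeably different techniques. In both cases the final claim that $\C$ is efficient then follows immediately from Definition~\ref{def:efficient exact category}, once (Ax3) is established.

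For case~(1), with enough projectives, the construction is direct. For each $S\in\clS$, pick a deflation $\pi_S:P_S\defl S$ with $P_S$ projective and let $K_S=\ker(\pi_S)$; this provides an inflation $i_S:K_S\infl P_S$ with $\Coker(i_S)\cong S$. Set $\I=\{i_S\mid S\in\clS\}$. Since $\clS$ is a set, so is $\I$, and $\Coker(\I)$ is the class of objects isomorphic to objects of $\clS$. To verify that $\I$ is strongly homological, take any inflation $j:A\infl B$ with cokernel $S\in\clS$. By projectivity of $P_S$, the deflation $B\defl S$ allows us to lift $\pi_S$ to a morphism $\tilde\pi_S:P_S\to B$ with $(B\defl S)\circ\tilde\pi_S=\pi_S$. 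Since $\tilde\pi_S\circ i_S$ composes to zero with $B\defl S$, the kernel property of $j$ produces the left vertical morphism $K_S\to A$, yielding the required commutative diagram with conflations in rows.

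For case~(2), the role of projective covers is taken over by deflations from coproducts of generators, but because such coproducts are rarely projective the construction must be enlarged. For each $S\in\clS$ fix a deflation $\pi_S:G_S\defl S$ with $G_S$ a coproduct of objects from the generating set $\G$, and let $i^{(0)}_S:K_S\infl G_S$ be its kernel inflation. The naive choice $\{i^{(0)}_S\}$ is not strongly homological, so I would take $\I_S$ to consist of all inflations obtained as pushouts of $i^{(0)}_S$ along morphisms $K_S\to A'$, where $A'$ ranges over a representative set of objects built from bounded coproducts of generators (which is a set because $\Hom(K_S,A')$ is a set for each $A'$ and the indexing objects $A'$ form a set up to isomorphism). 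Each element of $\I_S$ has cokernel $S$ because pushouts of conflations along arbitrary morphisms give conflations with the same cokernel (see~\cite[Proposition~2.12]{Bu}). Setting $\I=\bigcup_{S\in\clS}\I_S$ gives a set with $\Coker(\I)$ equal to the isomorphism closure of $\clS$.

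The main obstacle is verifying that this enlarged $\I$ is strongly homological in case~(2). Given an inflation $j:A\infl B$ with cokernel $S\in\clS$, the strategy is to choose a deflation $\tilde G\defl B$ from a coproduct of generators; the composite $\tilde G\to B\defl S$ is a deflation whose kernel $\tilde K$ fits into a conflation $\tilde K\infl\tilde G\defl S$ mapping to $A\infl B\defl S$ over the identity on $S$. Because $S$ is generated by at most a set-bounded family of generators, one can select a sub-coproduct $\tilde G'\subseteq\tilde G$ of bounded size for which $\tilde G'\to S$ is still a deflation, obtaining a bounded-size witness and, via the pushout description, producing an element of $\I_S$ mapping into the original conflation. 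The hypothesis that every section has a cokernel and the existence of arbitrary coproducts are used here to form the relevant deflations and sub-coproducts. Once strong-homological-ness of $\I$ is established in both cases, (Ax3) holds, and adding the standing assumptions (Ax1) and (Ax2) shows $\C$ is efficient.
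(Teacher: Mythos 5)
Your treatment of case (1) is correct and coincides with the paper's: lift $\pi_S$ through $B\defl S$ by projectivity of $P_S$ and use the kernel property of $j$ to get the left vertical map; the ``In particular'' clause is then immediate. The problems are all in case (2).

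The first and most serious gap is structural: your $\I_S$ consists exclusively of pushouts of the \emph{single} conflation $\varepsilon_0\colon 0\to K_S\xrightarrow{i^{(0)}_S} G_S\to S\to 0$. If such an $\I_S$ were strongly homological, then by \cite[Proposition 2.12]{Bu} every conflation $0\to A\to B\to S\to 0$ would be a pushout of some element of $\I_S$, hence (a pushout of a pushout being a pushout) a pushout of $\varepsilon_0$ itself; equivalently, the connecting map $\Hom_\C(K_S,-)\la \Ext^1_\C(S,-)$ would be an epimorphism of functors. That holds when $G_S$ is projective --- which is exactly case (1) --- but in case (2) the generator need not be projective and the map $\Ext^1_\C(S,T)\la\Ext^1_\C(G_S,T)$ need not vanish (think of $\Qco\bbX$ for a non-affine scheme, where no nonzero object is projective). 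So no set built from pushouts of one fixed conflation per $S$ can work; one genuinely needs \emph{many} conflations ending in $S$, with varying middle terms.

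The second gap is the unproved assertion that ``one can select a sub-coproduct $\tilde G'\subseteq\tilde G$ of bounded size for which $\tilde G'\to S$ is still a deflation.'' In a general exact category a deflation from a coproduct does not restrict to a deflation from a subcoproduct, and you never say what the bound is or why one bound works for every $B$ (the coproduct $\tilde G\defl B$ may be arbitrarily large as $B$ varies over a proper class). The paper resolves both issues simultaneously: using Lemmas~\ref{lem:adding a deflation}, \ref{lem:(p,pu) deflation implies f deflation} and~\ref{lem:weakly terminal deflation} (this is where the hypothesis that sections have cokernels enters), it passes to a \emph{weakly terminal} subfamily of the components of $G^{(\Hom_\C(G,B))}\la B\la S$, so that the surviving subcoproduct is indexed by the subset $J=\{q\circ f\mid f\in\Hom_\C(G,B)\}$ of the \emph{fixed} set $\Hom_\C(G,S)$. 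The set $\I$ then consists of the kernel inflations $K_{S,J}\infl G^{(J)}$ over all $S\in\clS$ and all $J\subseteq\Hom_\C(G,S)$ for which $G^{(J)}\la S$ is a deflation (a set because each $\Hom_\C(G,S)$ is a set), and the comparison map $G^{(J)}\la B$ is built componentwise from the very definition of $J$, each $j\in J$ being of the form $q h_j$. You would need to replace your pushout-generated $\I_S$ by a family of this kind for the argument to go through.
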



\begin{proof}
To prove the statement when $\C$ has enough projectives, we only need to fix a conflation
$$ 0 \to K_S \overset{i_S}\la P_S \la S \to 0, $$
with $P_S$ projective, for each $S \in \clS$. A standard argument for module categories, also valid here, shows that $\Ext^1_\C(S,T) \cong \Coker \Hom_\C(i_S,T)$. Hence the set of inflations $\I= \{ i_S \mid S\in\clS \}$ is strongly homological.

The proof for case (2) is inspired by~\cite{G1}, while a similar argument is also used in the proof of~\cite[Lemma 4.3]{EGPT}. If there is a set of generators of $\C$, we can, by taking their coproduct, obtain a single generator $G \in \C$. Let $S\in\clS$ and suppose we have a conflation of the form
$$ 0 \to A \overset{u}\la B \overset{q}\la S \to 0. $$

We first claim that the canonical morphism $\pi: G^{(\Hom_\C(G,B))}\la B$ is a deflation. To see that, fix a deflation $\pi': G^{(I)} \defl B$, for some set $I$, and  denote by $\pi'_i: G\la B$ its $i$-th component, for each $i\in I$. Take now a set $I' \subseteq I$ of representatives of equivalence classes for the equivalence relation on $I$ given by $i\sim j$ if $\pi'_i = \pi'_j$. It is obvious that $(\pi'_i)_{i\in I'}$ is a weakly terminal subfamily of $(\pi'_i)_{i\in I}$ (see Definition~\ref{def:weakly terminal subfamily}). Lemma~\ref{lem:weakly terminal deflation} tells us that the induced morphism $\pi'': G^{(I')} \la B$ is also a deflation.
Viewing $I'$ as a subset of $\Hom_\C(G,B)$ in the obvious way, the morphism $\pi: G^{(\Hom_\C(G,B))} \la B$ can be expressed as
$$ \begin{pmatrix}\pi'' & \rho\end{pmatrix}: G^{(I')} \oplus G^{(\Hom_\C(G,B) \setminus I')} \la B, $$
and the claim follows by applying Lemma~\ref{lem:adding a deflation}.

Further, let us define a subset $J \subseteq \Hom_\C(G,S)$ as $J = \{ q\circ f \mid f \in \Hom_\C(G,B) \}$. As the composition $q\pi: G^{(\Hom_\C(G,B))}\la S$ is a deflation, one gets, invoking Lemma~\ref{lem:weakly terminal deflation} once again, that the natural morphism $\rho: G^{(J)} \la S$ is a deflation. That is, we have a commutative diagram
$$
\xymatrix{
0 \ar[r] &
K_{S,J} \ar[r]^{i_{S,J}} \ar[d] &
G^{(J)} \ar[r]^{\rho} \ar[d]_h &
S \ar[r] \ar@{=}[d] &
0
\\
0 \ar[r] &
A \ar[r]^{u} &
B \ar[r]^{q} &
S \ar[r] &
0
}
$$
with conflations in rows, where the morphisms $h: G^{(J)} \la B$ is constructed so that the component $h_j: G \la B$ corresponding to $j \in J \subseteq \Hom_\C(G,S)$ satisfies the equality $j = q h_j$.

It follows from the construction that the the set $\I = \{ i_{S,J}: K_{S,J} \infl G^{(J)} \}$, where $S$ runs over all elements of $\clS$ and $J$ runs over all subsets of $\Hom_\C(G,S)$ \st $\rho: G^{(J)} \la S$ is a deflation, is strongly homological and, up to isomorphism, $\clS = \Coker(\I)$.
\end{proof}

\begin{exems} \label{expl:efficient exact categories}
Using the last proposition, we get the following classes of examples of efficient exact categories:

\begin{enumerate}
\item A Grothendieck category with the abelian structure is efficient; see also Example~\ref{expl:grothendieck category}.

\item An accessible additive category $\A$ with arbitrary coproducts (see Definition~\ref{def:accessible categories}), considered with the split exact structure, is efficient. Similarly, the category $\Cpx\A$ of chain complexes  over such $\A$ with the componentwise split exact structure is efficient; see also Example~\ref{expl:semisplit structure}.

\item A Frobenius exact category (see Section~\ref{sec:frobenius}) is efficient \iff it satisfies the conditions of Setup~\ref{setup:nice exact category}. Using Proposition~\ref{prop:exact category with enough injectives}, it suffices to check for (Ax1) that all sections have cokernels and that all transfinite compositions of inflations exist.
\end{enumerate}
\end{exems}

As for module categories~\cite{GT}, a crucial tool for studying classes of objects in an exact category defined by vanishing of the $\Ext$ functor are so called filtrations. Let us give a definition:

\begin{defi} \label{def:filtrations}
Let $\C$ be an exact category and $\clS$ be a class of objects of $\C$. Then an object $X$ of $\C$ is called {\bf $\clS$-filtered} if the morphism $0\la X$ is the  composition of a $\lambda$-sequence $[(X_\alpha)_{\alpha<\lambda},(i_{\alpha\beta})_{\alpha<\beta<\lambda}]$ \st all $X_\alpha \la X_{\alpha+1}$ are inflations
with a cokernel in $\clS$. The $\lambda$-sequence is then called an {\bf $\clS$-filtration} of $X$, and the class of all $\clS$-filtered objects is denoted by $\Filt\clS$.
\end{defi}

While filtrations are in fact just transfinite extensions and in module categories the concept is very natural, at the level of generality we are dealing with one has to be careful. For instance, the following lemma for module categories with the abelian exact structure would be almost obvious.

\begin{lemma} \label{lem:refining filtrations}
Let $\C$ be as in Setup~\ref{setup:nice exact category}. Suppose we are given a conflation
$$ \varepsilon: \quad 0 \to A \overset{j}\la B \la F \to 0 $$
and a $\lambda$-sequence $[(F_\alpha)_{\alpha<\lambda},(i_{\alpha\beta})_{\alpha <\beta<\lambda}]$ \st $F_0 = 0$, $\colim_{\alpha<\lambda} F_\alpha = F$, and all the morphisms $i_{\alpha,\alpha+1}$ are inflations. Then there is a $\lambda$-sequence
$$ [(B_\alpha)_{\alpha<\lambda},(m_{\alpha\beta})_{\alpha <\beta<\lambda}] $$
whose composition is precisely $j: A \la B$ and \st all the morphisms $m_{\alpha,\alpha+1}$ are inflations and $\Coker m_{\alpha,\alpha+1} \cong \Coker i_{\alpha,\alpha+1}$ for all $\alpha<\lambda$.
\end{lemma}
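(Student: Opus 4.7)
The plan is to define each $B_\alpha$ as the pullback of the deflation $p: B \defl F$ appearing in $\varepsilon$ along the canonical morphism $f_\alpha: F_\alpha \la F$ into the colimit. Since $p$ is a deflation, this pullback exists in $\C$ and produces a deflation $p_\alpha: B_\alpha \defl F_\alpha$ whose kernel is canonically identified with $A$; in particular $B_0 = A$, because $F_0 = 0$. The universal property of pullbacks assembles the $B_\alpha$ into a direct system with transition maps $m_{\alpha\beta}: B_\alpha \la B_\beta$. By the pasting property of pullbacks, the square with vertices $B_\alpha, B_\beta, F_\alpha, F_\beta$ is itself a pullback of the deflation $p_\beta$ along the inflation $i_{\alpha\beta}$, so the $3\times 3$-lemma (or noble nine-lemma) for exact categories shows that $m_{\alpha\beta}$ is an inflation with $\Coker m_{\alpha\beta} \cong \Coker i_{\alpha\beta}$; specializing to $\beta=\alpha+1$ gives the required isomorphism of cokernels.

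The main step will be to check the colimit condition at limit ordinals, i.e.\ that $[(B_\alpha)_{\alpha<\lambda},(m_{\alpha\beta})]$ is genuinely a $\lambda$-sequence. I would argue by transfinite induction on limit ordinals $\gamma \leq \lambda$: assuming the condition at every limit below $\gamma$, the truncation $[(B_\alpha)_{\alpha<\gamma}]$ becomes a $\gamma$-sequence of inflations, so by (Ax1) the colimit $\tilde B := \colim_{\alpha<\gamma} B_\alpha$ exists and $A = B_0 \infl \tilde B$ is an inflation whose cokernel $\tilde F$ therefore exists in $\C$. Since cokernels are themselves colimits and colimits commute with colimits,
$$ \tilde F \;=\; \Coker(A \la \colim_\alpha B_\alpha) \;\cong\; \colim_\alpha \Coker(A \la B_\alpha) \;=\; \colim_\alpha F_\alpha \;=\; F_\gamma. $$
Hence $0 \to A \to \tilde B \to F_\gamma \to 0$ is a conflation, which we compare with $0 \to A \to B_\gamma \to F_\gamma \to 0$ via the canonical map $\tilde B \la B_\gamma$ supplied by the universal property of the pullback; the short five-lemma for exact categories then forces $\tilde B \cong B_\gamma$.

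Finally, the same argument applied at $\gamma=\lambda$ (or trivially when $\lambda$ is a successor) identifies $\colim_{\alpha<\lambda} B_\alpha$ with the pullback $B \times_F F = B$ and the composition $B_0 \la \colim_{\alpha<\lambda} B_\alpha$ with $j$, completing the proof. The crux of the argument, and the only place that is not purely formal, is the limit step: it rests on the interchange $\Coker(A \la \colim B_\alpha) \cong \colim \Coker(A \la B_\alpha)$. This is a general commutativity of colimits with colimits, but one has to justify that every colimit appearing in the chain actually exists in $\C$, which here is guaranteed by (Ax1) together with the existence of cokernels of inflations in any exact category.
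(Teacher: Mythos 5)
Your proposal is correct and follows essentially the same route as the paper: define $B_\alpha$ as the pullback of $\varepsilon$ along $F_\alpha\la F$, use the standard fact that pulling back a deflation along an inflation yields an inflation with isomorphic cokernel, and then handle the limit ordinals by transfinite induction using (Ax1) together with the interchange of the colimit with cokernels, finishing with a comparison of conflations. The paper's proof is the same argument, citing \cite[Propositions 2.12 and 2.15]{Bu} where you invoke the $3\times 3$-lemma and the short five lemma.
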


\begin{proof}
It is fairly easy to see what $[(B_\alpha)_{\alpha<\lambda},(m_{\alpha\beta})_{\alpha <\beta<\lambda}]$ should be. Namely, we can construct a well ordered direct system of conflations
$$ \varepsilon_\alpha: \quad 0\rightarrow A\stackrel{j_\alpha}\longrightarrow B_\alpha\longrightarrow F_\alpha\rightarrow 0 $$
as pullbacks of the original conflation $\varepsilon$ along the colimit morphisms $i_\alpha: F_\alpha \infl F$. Clearly, $j_0$ is an isomorphism, so we can take $B_0 = A$ and $j_0 = 1_A$. This way, we get a direct system $[(B_\alpha)_{\alpha<\lambda},(m_{\alpha\beta})_{\alpha <\beta<\lambda}]$. Using the axioms of exact categories and~\cite[Proposition 2.15]{Bu}, we immediately see that $m_{\alpha,\alpha+1}$ are all inflations and $\Coker m_{\alpha,\alpha+1} \cong \Coker i_{\alpha,\alpha+1}$.

The non-trivial part here is that this direct system is a $\lambda$-sequence and that $j: A \la B$ is its composition. What we will in fact prove is that for any limit ordinal $\gamma<\lambda$, the colimit morphism $\colim_{\alpha<\gamma}\varepsilon_\alpha \la \varepsilon_\gamma$ is an isomorphism, an so is the morphism $\colim_{\alpha<\lambda}\varepsilon_\alpha \la \varepsilon$. We prove the claim by induction on $\gamma\le\lambda$, where we put $\varepsilon_\lambda = \varepsilon$ by definition. To this end, first note that for each such $\gamma$ the colimit diagram
$$
\colim_{\alpha<\gamma} \varepsilon_\alpha: \quad
0\to A\la \colim_{\alpha<\gamma}B_\alpha\la \colim_{\alpha<\gamma}F_\alpha\to 0
$$
is a conflation since the morphism $A\la \colim_{\alpha<\gamma}B_\alpha$ is none other than the composition of the $\gamma$-sequence $[(B_\alpha)_{\alpha<\gamma},(m_{\alpha\beta})_{\alpha <\beta<\gamma}]$. Here we use the inductive hypothesis and the fact that direct limits commute with cokernels. Then the morphism $\colim_{\alpha<\gamma}\varepsilon_\alpha \la \varepsilon_\gamma$ gives rise to a diagram with conflations in rows:
$$
\xymatrix{
0 \ar[r] &
A \ar[r] \ar@{=}[d] &
\colim_{\alpha<\gamma} B_\alpha \ar[r] \ar[d] &
\colim_{\alpha<\gamma} F_\alpha \ar[r] \ar[d]^\cong &
0\phantom{,}
\\
0 \ar[r] &
A \ar[r]^{j_\alpha} &
B_\alpha \ar[r] &
F_\alpha \ar[r] &
0,
}
$$
which makes clear that $\colim_{\alpha<\gamma}B_\alpha \la B_\gamma$ is an isomorphism (use for example~\cite[Proposition 2.12]{Bu}).
\end{proof}

As an immediate consequence, using what is called interpolation of sequences in~\cite[Definition 10.2.11]{Hir}, we have the following:

\begin{cor} \label{cor:filtrations closed under transf ext}
Let $\C$ be as in Setup~\ref{setup:nice exact category}, $\clS$ be a class of objects of $\C$, and put $\F = \Filt\clS$. Then any $\F$-filtered object of $\C$ belongs to $\F$.
\end{cor}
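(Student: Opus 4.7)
The plan is to refine the given $\F$-filtration of $X$ one step at a time into an $\clS$-filtration, and then splice the refinements together using the interpolation of sequences mentioned in~\cite[Definition 10.2.11]{Hir}.

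Suppose $X\in\Filt\F$, so that $0\la X$ is the composition of a $\lambda$-sequence $[(X_\alpha)_{\alpha<\lambda},(i_{\alpha\beta})_{\alpha<\beta<\lambda}]$ in which each $i_{\alpha,\alpha+1}:X_\alpha\infl X_{\alpha+1}$ is an inflation with cokernel $F_\alpha\in\F=\Filt\clS$. By definition of $\Filt\clS$, for each $\alpha<\lambda$ I may fix a $\mu_\alpha$-sequence
$$[(F_{\alpha,\xi})_{\xi<\mu_\alpha},(k_{\alpha,\xi\eta})_{\xi<\eta<\mu_\alpha}]$$
whose composition is $0\la F_\alpha$ and whose successive cokernels $\Coker k_{\alpha,\xi,\xi+1}$ all lie in $\clS$.

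First I apply Lemma~\ref{lem:refining filtrations} to the conflation $X_\alpha\infl X_{\alpha+1}\defl F_\alpha$ together with the chosen filtration of $F_\alpha$. This yields a $\mu_\alpha$-sequence $[(B_{\alpha,\xi})_{\xi<\mu_\alpha},(m_{\alpha,\xi\eta})]$ whose composition is precisely the inflation $i_{\alpha,\alpha+1}:X_\alpha\infl X_{\alpha+1}$ and such that $\Coker m_{\alpha,\xi,\xi+1}\cong\Coker k_{\alpha,\xi,\xi+1}\in\clS$ for every $\xi+1<\mu_\alpha$. In particular, $B_{\alpha,0}=X_\alpha$ and the colimit is $X_{\alpha+1}=B_{\alpha+1,0}$, so the two consecutive refinements glue naturally.

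Finally, I concatenate. Set $\mu=\sum_{\alpha<\lambda}\mu_\alpha$, the ordinal sum, and define a $\mu$-sequence by splicing the refined sequences along the identifications $B_{\alpha,0}=X_\alpha$ and $\colim_{\xi<\mu_\alpha}B_{\alpha,\xi}=X_{\alpha+1}=B_{\alpha+1,0}$, while using the original continuity $\colim_{\alpha<\gamma}X_\alpha=X_\gamma$ at limit ordinals $\gamma<\lambda$ of the outer sequence. This produces a $\mu$-sequence $0\la X$ whose successive morphisms are inflations with cokernels in $\clS$, so $X\in\Filt\clS$. The main technical obstacle is verifying continuity of the spliced sequence at limit ordinals of the outer parameter $\alpha$; but this is exactly the content of the interpolation of sequences in~\cite[Definition 10.2.11]{Hir}, which is applicable because the continuity conditions for both the inner $\mu_\alpha$-sequences and the outer $\lambda$-sequence hold by construction.
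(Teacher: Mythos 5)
Your proposal is correct and follows essentially the same route as the paper: refine each step $X_\alpha\infl X_{\alpha+1}$ via Lemma~\ref{lem:refining filtrations} applied to the chosen $\clS$-filtration of the cokernel, then splice the refinements into a single $\clS$-filtration by interpolation of sequences as in~\cite[Proposition 10.2.8]{Hir}. No gaps.
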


\begin{proof}
Let $[(X_\alpha)_{\alpha<\lambda},(i_{\alpha\beta})_{\alpha <\beta<\lambda}]$ be an $\F$-filtration of $X$, that is, a $\lambda$-sequence \st $X_0 = 0$, $X = \colim_{\alpha<\lambda} X_\alpha$ and $\Coker(i_{\alpha,\alpha+1}) \in \Filt\clS$ for all $\alpha+1 < \lambda$. For each $\alpha+1 < \lambda$, let us fix an $\clS$-filtration $[(F^\alpha_\gamma)_{\gamma<\mu_\alpha},(i^\alpha_{\gamma\delta})_{\gamma <\delta<\mu_\alpha}]$ of $\Coker(i_{\alpha,\alpha+1})$. Hence, Lemma~\ref{lem:refining filtrations} provides us with a $\mu_\alpha$-sequence $[(B^\alpha_\gamma)_{\gamma<\mu_\alpha},(m^\alpha_{\gamma\delta})_{\gamma<\delta<\mu_\alpha}]$ whose composition is $i_{\alpha,\alpha+1}: X_\alpha \la X_{\alpha+1}$. A standard argument allows us to obtain an $\clS$-filtration of $X$ by interpolating the $\F$-filtration by the $\mu_\alpha$-sequences above. We refer to~\cite[Proposition 10.2.8]{Hir} for details.
\end{proof}

More importantly though, we prove a result which can be interpreted as the fact that the left hand class of a cotorsion pair (cf. Section~\ref{subsec:approx}) is closed under transfinite extensions. For module categories, this is known as the Eklof Lemma, see~\cite[Lemma 3.1.2]{GT}.

\begin{prop} \label{prop:eklof lemma}
Let $\C$ be an exact category as in Setup~\ref{setup:nice exact category}. Suppose we have a $\lambda$-sequence $[(X_\alpha)_{\alpha<\lambda},(i_{\alpha\beta})_{\alpha<\beta<\lambda}]$ with composition $0 \la X$, and \st all the morphisms $i_{\alpha,\alpha+1}$ are inflations. If $Y \in \C$ is \st $\Ext_\C^1(\Coker i_{\alpha,\alpha+1},Y) = 0$ for each $\alpha<\lambda$, then also $\Ext_\C^1(X,Y) = 0$.
\end{prop}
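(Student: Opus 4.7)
The plan is to show that every conflation $\varepsilon \colon 0 \to Y \to E \to X \to 0$ must split, which is precisely the vanishing $\Ext_\C^1(X,Y) = 0$. First I would invoke Lemma~\ref{lem:refining filtrations} applied to this $\varepsilon$ and the given $\lambda$-sequence $[(X_\alpha),(i_{\alpha\beta})]$; this produces a $\lambda$-sequence $[(E_\alpha),(m_{\alpha\beta})]$ of inflations whose composition is the inclusion $Y \infl E$, along with compatible pulled-back conflations
\[ \varepsilon_\alpha \colon \quad 0 \to Y \la E_\alpha \la X_\alpha \to 0 \]
satisfying $E_0 = Y$ and $\Coker m_{\alpha,\alpha+1} \cong \Coker i_{\alpha,\alpha+1}$ for every $\alpha+1 < \lambda$.

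The heart of the argument is then to construct, by transfinite induction on $\alpha < \lambda$, compatible retractions $r_\alpha \colon E_\alpha \la Y$ of the inflations $Y \infl E_\alpha$; i.e.\ with $r_0 = 1_Y$ and $r_\beta m_{\alpha\beta} = r_\alpha$ for $\alpha < \beta$. At a successor stage $\alpha+1$, the pushout of $r_\alpha$ along the inflation $m_{\alpha,\alpha+1}$ exists in $\C$ by the axioms of an exact category and, via~\cite[Proposition 2.12]{Bu}, produces a conflation
\[ 0 \to Y \overset{\iota}\la P \la \Coker i_{\alpha,\alpha+1} \to 0. \]
By hypothesis on $Y$, this conflation splits, so fixing a retraction $q \colon P \la Y$ of $\iota$ and composing with the other leg $p \colon E_{\alpha+1} \la P$ of the pushout gives $r_{\alpha+1} := qp$; the pushout identity $p\, m_{\alpha,\alpha+1} = \iota\, r_\alpha$ combined with $q\iota = 1_Y$ forces $r_{\alpha+1} m_{\alpha,\alpha+1} = r_\alpha$. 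At a limit ordinal $\gamma < \lambda$, the $\lambda$-sequence property $E_\gamma = \colim_{\alpha<\gamma} E_\alpha$ allows the compatible family $(r_\alpha)_{\alpha<\gamma}$ to be extended uniquely via the universal property of the colimit. Finally, because the composition of the whole $\lambda$-sequence is the inflation $Y \infl E = \colim_{\alpha<\lambda} E_\alpha$, the same colimit argument delivers a retraction $r \colon E \la Y$ of $Y \infl E$, splitting $\varepsilon$.

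The main potential obstacle, as is typical in Eklof-type arguments, lies in the limit steps: one needs that the intermediate objects $E_\alpha$ actually assemble into a $\lambda$-sequence whose composition recovers the given inflation $Y \infl E$. Fortunately all of this technical packaging has been done once and for all in Lemma~\ref{lem:refining filtrations}, so beyond invoking it the limit steps reduce to a formal application of the universal property of colimits, and the nontrivial work is concentrated in the successor step via the pushout and the hypothesis $\Ext_\C^1(\Coker i_{\alpha,\alpha+1}, Y) = 0$.
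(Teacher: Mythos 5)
Your proof is correct and is essentially the paper's argument: both rest on Lemma~\ref{lem:refining filtrations} followed by a transfinite induction in which the successor step uses the vanishing of $\Ext^1_\C(\Coker i_{\alpha,\alpha+1},Y)$ via a pushout and the limit steps use the universal property of the colimit. The only (cosmetic) difference is that the paper extends an arbitrary morphism $f\colon A\la Y$ along the inflation of a general conflation $0\to A\to B\to X\to 0$, whereas you specialize at once to extending $1_Y$ along $Y\infl E$, i.e.\ to building the retraction directly.
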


\begin{proof}
Denote $S_\alpha = \Coker i_{\alpha,\alpha+1}$ for each $\alpha<\lambda$. We need to prove that whenever
$$
0\to A\overset{j}\la B\la X\to 0
$$
is a conflation and $f: A \la Y$ is a morphism, then $f$ factors through $j$. Indeed, if this is the case, then any conflation $0\to Y\la E\la X\to 0$ splits.

Here, however, we apply Lemma~\ref{lem:refining filtrations} to get a $\lambda$-sequence $[(B_\alpha)_{\alpha<\lambda},(m_{\alpha\beta})_{\alpha <\beta<\lambda}]$ with composition equal to $j: A \la B$, and \st for all $\alpha<\lambda$ we have a conflation
$$
0 \la B_\alpha \xrightarrow{m_{\alpha,\alpha+1}} B_{\alpha+1} \xrightarrow{\phantom{m_{\alpha,\alpha+1}}} S_\alpha \la 0.
$$

Now, if we are given any morphism $f: A \la Y$, we can inductively extend it to a morphism $f_\alpha: B_\alpha \la Y$. Namely, we take $f_0 = f$, $f_\alpha$ always extends to $f_{\alpha+1}$ since $\Ext_\C^1(S_\alpha,Y)=0$, and at limit steps we just take $f_\alpha=\colim_{\alpha<\beta} f_\beta$.
\end{proof}

Now we are ready to connect the preceding results together in the main result of the section. Being somewhat technical due to its generality, we supplement the theorem with Corollary~\ref{cor:consequences of Theorem}, which is easier to use in our suggested applications.

\begin{theor} \label{thm:orthogonality of morphisms - cotorsion pairs}
Let $\C$ be an exact category as in Setup~\ref{setup:nice exact category}, that is, \st transfinite compositions of inflations exist and are inflations, and \st every object of $\C$ is small relative to the class of all  inflations. Let $\I$ be a homological set of inflations, and put $\clS=\Coker(\I)$ and $\T=\clS^{\perp_1}$. The
following assertions hold:
\begin{enumerate}
\item The class $\ICells$ is precovering in $\C$;

\item $\ICells \subseteq \Filt\clS$. If $\I$ is strongly homological, then $\ICells = \Filt\clS$;

\item $\T = (\ICells)^{\perp_1} = (\Filt\clS)^{\perp_1}$;

\item The class $\T$ is a preenveloping class in $\mathcal{C}$. In fact, for each object $M\in\C$, there is a conflation
$$
0 \to M \la T \la X \to 0,
$$
with $T\in\T$ and $X\in\ICells$;

\item If the class $^{\perp_1}\T$ is generating, then for each object $M\in\C$ there is a conflation
$$
0\to T' \la X' \la M \to 0,
$$
with $T'\in\T$ and $X'\in {^{\perp_1}\T}$. In particular, $(^{\perp_1}\T,\T)$ is a complete
cotorsion pair in $\C$;

\item If $\Filt\clS$ is a generating class of $\C$, then ${^{\perp_1}\mathcal{T}}$ consists precisely of direct summands of $\clS$-filtered objects.
\end{enumerate}
\end{theor}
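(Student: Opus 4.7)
The easy inclusion is that every direct summand of an $\clS$-filtered object lies in $^{\perp_1}\T$. Indeed, the Eklof Lemma (Proposition~\ref{prop:eklof lemma}) applied to the filtration gives $\Filt\clS \subseteq {^{\perp_1}\T}$, since each cokernel in the filtration is in $\clS$ and $\T = \clS^{\perp_1}$. Because $^{\perp_1}\T$ is defined by the vanishing of additive functors $\Ext_\C^1(-,T)$, it is closed under direct summands. Hence the inclusion ``direct summands of $\clS$-filtered objects $\subseteq {^{\perp_1}\T}$'' is immediate.

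For the converse, the plan is a version of the classical Salce trick combined with assertion (4) of the theorem. Fix $F\in{^{\perp_1}\T}$. Since $\Filt\clS$ is generating, pick a deflation $\pi: Y_0 \defl F$ with $Y_0 \in \Filt\clS$, and let $K = \Ker\pi$, giving a conflation $0\to K\to Y_0\to F\to 0$. Apply assertion (4) to $K$ to obtain a conflation
$$
0\to K\la T\la X\to 0
$$
with $T\in\T$ and $X\in\ICells$. By assertion (2), $X\in\Filt\clS$.

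Next, form the pushout of $K\to Y_0$ along $K\to T$. By the axioms of exact categories (see~\cite[Proposition 2.12]{Bu}), this yields a commutative diagram with conflations in rows and columns:
$$
\xymatrix{
0 \ar[r] & K \ar[r] \ar[d] & Y_0 \ar[r] \ar[d] & F \ar@{=}[d] \ar[r] & 0 \\
0 \ar[r] & T \ar[r] \ar[d] & Y \ar[r] \ar[d] & F \ar[r] & 0 \\
& X \ar@{=}[r] & X &&
}
$$
In particular, we get a conflation $0\to Y_0\to Y\to X\to 0$ in the middle column. Since $Y_0\in\Filt\clS$ and $X\in\Filt\clS$, the object $Y$ is $\Filt\clS$-filtered in two steps, so $Y\in\Filt\clS$ by Corollary~\ref{cor:filtrations closed under transf ext}.

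Finally, the conflation $0\to T\to Y\to F\to 0$ splits because $F\in{^{\perp_1}\T}$ and $T\in\T$ force $\Ext_\C^1(F,T)=0$. Therefore $Y\cong F\oplus T$, exhibiting $F$ as a direct summand of the $\clS$-filtered object $Y$. The main technical point is the use of Corollary~\ref{cor:filtrations closed under transf ext} to conclude that the extension $Y$ of $X$ by $Y_0$ is again $\clS$-filtered; once this is granted, the rest is a direct combination of assertions (2) and (4) with a standard pushout construction.
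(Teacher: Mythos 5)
Your argument only addresses assertion (6); assertions (1)--(5) are nowhere proved, and in fact your proof of (6) explicitly invokes (2) and (4) as known. Since those five assertions carry the real weight of the theorem --- the small object argument producing the $\ICells$-precover and the $\T$-preenvelope, the identification $\ICells=\Filt\clS$ for strongly homological $\I$, the Eklof-type computation $\T=(\Filt\clS)^{\perp_1}$, and the Salce argument giving completeness of the cotorsion pair --- the proposal as it stands is a proof of only one sixth of the statement, and the one part it does prove is not self-contained. That is the gap.

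That said, the argument you give for (6) is correct and is essentially identical to the paper's: the paper also takes a deflation $X''\defl M$ with $X''\in\Filt\clS$ (possible because $\Filt\clS$ is generating), applies (4) to the kernel, forms the Salce pushout, observes that the resulting middle object is an extension of two objects of $\Filt\clS$ and hence lies in $\Filt\clS$ by Corollary~\ref{cor:filtrations closed under transf ext}, and concludes via the splitting forced by $\Ext^1_\C(M,T)=0$. Your diagram is merely the transpose of the paper's. Your ``easy inclusion'' via Proposition~\ref{prop:eklof lemma} and additivity of $\Ext^1_\C(-,T)$ is also fine (the paper leaves it implicit in assertion (3)). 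So to complete the submission you would need to supply proofs of (1)--(5), which require Proposition~\ref{prop:small object argument} and Lemmas~\ref{lem:auxiliary lemma} and~\ref{lem:refining filtrations}, none of which appear in your write-up.
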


\begin{proof}
(1). Our version of the small object argument from Proposition~\ref{prop:small object argument} applies here. If $M\in\C$ is any object, then we decompose the map $0\la M$ as
$$ 0\overset{i}\la X \overset{p} \la M,$$
where $i$ is a relative $\I$-cell complex and $p\in\Iinj$. Therefore, $X\in\ICells$. We claim that $p$ is actually an $\ICells$-precover. Indeed, if $Z\in\ICells$ then the map $0\la Z$ is a relative $\I$-cell complex. In particular, all $\I$-injective morphisms must have the right lifting property with respect to $0\la Z$. Applying the fact to $p \in \Iinj$, this precisely means that every map $f:Z\la M$ factors through $p$. Thus assertion (1) holds.

(2). The inclusion $\ICells \subseteq \Filt\clS$ follows easily from Definition~\ref{def:filtrations}, since a pushout of a morphism from $\I$ is always an inflation with cokernel isomorphic to an object from $\clS$.

On the other hand, if $\I$ is strongly homological, then any inflation $A\infl B$ with a cokernel in $\clS$ is a relative $\I$-cell complex since it is a pushout of a map in $\I$ by Definition~\ref{def:homological set of inflations} and~\cite[Proposition 2.12]{Bu}. Since the class of relative $\I$-cell complexes is closed under taking transfinite compositions (cf.\ \cite[Lemma 2.1.12]{H2}), the inclusion $\Filt\clS\subseteq\ICells$ follows.

(3). In view of (2), it suffices to prove that $\Ext_\C^1(X,T) = 0$ for any $\clS$-filtered object $X$ and $T \in \T$. But this immediately follows from Proposition~\ref{prop:eklof lemma} and the fact that $\Ext_\C^1(S,T) = 0$ for each $S \in \clS$.

(4). First note that since $\I$ is homological, an object $T$ belongs to $\T=\clS^{\perp_1}$ \iff the morphism $T\la 0$ belongs $\Iinj$. The proof of $\T$ being preenveloping is then entirely dual to the proof of (1). That is, given $M \in \C$, we use the small object argument to find a factorization $M \overset{i}\la T \overset{p}\la 0$ of $M \la 0$, where $i$ is a relative $\I$-cell complex and $p \in \Iinj$, and readily show that $T \in \T$ and $i$ is a $\T$-preenvelope.
Moreover, $i$ is necessarily an inflation and $X=\Coker(i)\in\ICells$. The latter holds since $0\longrightarrow X$ is a pushout of $i$, hence a relative $\I$-cell complex by Lemma~\ref{lem:auxiliary lemma}.
%
%

(5). We first claim that ${^{\perp_1}\T}$ is closed under taking coproducts. Indeed, if we express the coproduct of a family $(A_\alpha)_{\alpha\in K}$ of objects of $\C$ as an $\clS$-filtration exactly as in the proof of Lemma~\ref{lem:existence of exact coproducts}, it suffices to apply Proposition~\ref{prop:eklof lemma} to see that $\coprod_{\alpha \in K} A_\alpha \in {^{\perp_1}\T}$. This proves the claim.

This allows us for each object $M$ to fix a deflation  $q:X''\la M$ with $X''\in {^{\perp_1}\T}$. Now, we put $K=\Ker(q)$ and apply assertion (4) to get a conflation $ 0\to K\la T\la X\la 0$, where $T\in\T$ and $X\in\ICells$. Following an idea of Salce from~\cite{Sal}, we form a pushout diagram with conflations in rows and columns:
$$
\xymatrix{
0 \ar[r] &
\xymoremargin{K} \ar[r] \arinfl[d] &
\xymoremargin{T} \ar[r] \arinfl[d] &
X \ar[r] \ar@{=}[d] &
0
\\
0 \ar[r] &
X'' \ar[r] \ardefl[d]_{q} &
X'  \ar[r] \ardefl[d] \xypushout &
X   \ar[r] &
0
\\
&
M \ar@{=}[r] &
M
}
$$
It follows that $X'\in{^{\perp_1}\T}$ since both $X$ and $X''$ have
this property. The central column of the diagram is then our desired
conflation.

(6). Assume $M \in {^{\perp_1}\T}$ and take a conflation
$$ 0\to K\la X''\overset{q}\la M\to 0 $$
with $X'' \in \Filt\clS$. This is possible since $\Filt\clS$ is closed under taking coproducts (cf. Corollary~\ref{cor:filtrations closed under transf ext}) and we assume that it is a generating class. Now, we apply assertion (4) to get a conflation
$$ 0\to K\la T\la X\to 0, $$
where $T\in\T$ and $X\in\ICells \subseteq \Filt\clS$. By constructing the same pushout diagram as above, we get a conflation
$$ 0\to T\la X'\la M\to 0 $$
with $T \in \T$ and $X' \in \Filt\clS$. Since $\Ext_\C^1(M,T) = 0$ by assumption, the latter conflation splits and $M$ is a summand of $X' \in \Filt\clS$.
\end{proof}

The precise relation between homological and strongly homological sets of morphisms is not fully understood yet. Consider the following example.

\begin{exem} \label{expl:homological vs strongly homological}
Let $\C$ be the category of abelian groups with the abelian exact structure and put $\I = \{ i: \mathbb{Z} \infl \mathbb{Q} \}$ and $\clS = \{ \mathbb{Q}/\mathbb{Z} \}$. Since $T = \mathbb{Q} \oplus \mathbb{Q}/\mathbb{Z}$ is an infinitely generated tilting abelian group in the sense of~\cite[Definition 5.1.1]{GT} (see also~\cite[Example 5.1.3]{GT}), the class $\clS^{\perp_1}$ is precisely the class of divisible groups by~\cite[Corollary 5.1.10(b)]{GT}. If now $Y$ is any abelian group such that $\Hom_\C(i,Y)$ is surjective, then every element $y \in Y$ of the form $y = f(x)$, for some morphism $f: \mathbb{Q} \la Y$ and $x = i(1)$. Therefore the trace of $\mathbb{Q}$ in $Y$ is the whole $Y$ and, hence, $Y$ is divisible. In particular, $\I$ is homological.

On the other hand, $\I$ cannot be strongly homological. One can see that by taking any free resolution $0 \to \mathbb{Z}^{(\aleph_0)} \overset{j}\la \mathbb{Z}^{(\aleph_0)} \la \mathbb{Q}/\mathbb{Z} \to 0$ of $\mathbb{Q}/\mathbb{Z}$. Then any commutative square with $i$ at the top and $j$ at the bottom has zero columns.

Nevertheless, we have $\ICells = \Filt\clS = \Add\{\mathbb{Q}/\mathbb{Z}\}$. This is because $\mathbb{Q}/\mathbb{Z}$ is injective in $\C$, so every $\clS$-filtration necessarily splits at each step.
\end{exem}

As promised before, we give an easier to apply corollary. Note that the conditions of $\C$ having a generator and $\C$ having enough projectives are independent; recall Remarks~\ref{rem:on projectives and generators, case I} and~\ref{rem:on projectives and generators, case II}.

\begin{cor} \label{cor:consequences of Theorem}
Assume $\C$ is an efficient exact category (see Definition~\ref{def:efficient exact category} and Examples~\ref{expl:efficient exact categories}), let $\clS$ be a set of objects of $\C$ and put $\T=\clS^{\perp_1}$. The following hold:
\begin{enumerate}
\item $\Filt\clS$ is a precovering class and $\T$ is a preenveloping class.

\item If $\clS$ is a generating set  of $\C$,
then $({^{\perp_1}\T}, \T)$ is a complete cotorsion pair in $\C$ and ${^{\perp_1}\T}$ consists precisely of summands of $\clS$-filtered objects.

\item If $\C$ has enough projectives, then $(^{\perp_1}\T,\T)$ is a complete cotorsion pair and an object $M$ is in ${^{\perp_1}\T}$ \iff it is a direct summand of an object
$E$ appearing in a conflation of the form
$$ 0 \to P \la E \la X \to 0, $$
with $P$ projective and $X \in \Filt\clS$.
\end{enumerate}
\end{cor}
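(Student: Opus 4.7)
The plan is to reduce every assertion to Theorem~\ref{thm:orthogonality of morphisms - cotorsion pairs}. First I would invoke (Ax3) to fix a strongly homological set of inflations $\I$ \st $\Coker(\I)$ coincides with $\clS$ up to isomorphism, and then apply Lemma~\ref{lem:homological sets} to upgrade $\I$ to a homological set. At this point Theorem~\ref{thm:orthogonality of morphisms - cotorsion pairs}(2) yields the identification $\ICells=\Filt\clS$, and part~(3) of the same theorem gives the inclusion $\Filt\clS\subseteq{^{\perp_1}\T}$ that will be used throughout.

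With this setup, assertion~(1) is immediate from parts~(1) and~(4) of Theorem~\ref{thm:orthogonality of morphisms - cotorsion pairs}. For~(2), if $\clS$ is generating, then so is $\Filt\clS$, and hence so is ${^{\perp_1}\T}\supseteq\Filt\clS$; I would then simply quote parts~(5) and~(6) of the theorem to obtain, respectively, completeness of $({^{\perp_1}\T},\T)$ and the description of ${^{\perp_1}\T}$ as the summands of $\clS$-filtered objects.

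The more interesting case is~(3). Every projective object of $\C$ lies in ${^{\perp_1}\T}$, so enough projectives force ${^{\perp_1}\T}$ to be generating, and Theorem~\ref{thm:orthogonality of morphisms - cotorsion pairs}(5) once again produces completeness. For the characterization, the ``if'' direction uses closure of ${^{\perp_1}\T}$ under extensions and under direct summands (both immediate from the vanishing of $\Ext_\C^1(-,T)$ for $T\in\T$), together with the observation that both $P$ (projective) and $X\in\Filt\clS$ lie in ${^{\perp_1}\T}$ by Theorem~\ref{thm:orthogonality of morphisms - cotorsion pairs}(3). For the ``only if'' direction I would fix $M\in{^{\perp_1}\T}$, pick a deflation $q:P\defl M$ with $P$ projective, apply Theorem~\ref{thm:orthogonality of morphisms - cotorsion pairs}(4) to $\Ker q$ to produce a conflation $0\to\Ker q\to T\la X\to 0$ with $T\in\T$ and $X\in\Filt\clS$, and then carry out Salce's pushout construction (exactly as in the proof of part~(5) of that theorem) to obtain a commutative $3\times 3$ diagram whose middle row is $0\to P\la E\la X\to 0$ and whose middle column is $0\to T\la E\la M\to 0$. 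Vanishing of $\Ext_\C^1(M,T)$ splits the latter and exhibits $M$ as a summand of the desired $E$.

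The step that will need the most care is the Salce pushout argument in~(3): the theorem does not give that precise characterization directly, so I must rebuild a small piece of its proof using a projective deflation in place of the ${^{\perp_1}\T}$-deflation used there. Everything else is bookkeeping that extracts the statement from Theorem~\ref{thm:orthogonality of morphisms - cotorsion pairs}.
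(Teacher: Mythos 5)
Your proposal is correct and follows essentially the same route as the paper: fix a strongly homological set $\I$ via (Ax3), identify $\ICells$ with $\Filt\clS$, read off parts (1) and (2) from Theorem~\ref{thm:orthogonality of morphisms - cotorsion pairs}(1), (2), (4), (5), (6), and for part (3) redo the Salce pushout with a projective deflation $P\defl M$ in place of the $\Filt\clS$-deflation. The only content you add beyond the paper's proof is the explicit (and correct) verification of the ``if'' direction in (3) and of the generating hypotheses needed to invoke parts (5) and (6), which the paper leaves implicit.
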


\begin{proof}
By definition, there is a strongly homological set of inflations $\I$ \st (up to isomorphism) $\Coker(\I)=\clS$. Then Theorem~\ref{thm:orthogonality of morphisms - cotorsion pairs}(1) and~(2) tell us that $\Filt\clS=\ICells$ is a precovering class. $\T$ is preenveloping by Theorem~\ref{thm:orthogonality of morphisms - cotorsion pairs}(4). This proves the first part. The second part is a direct consequence of Theorem~\ref{thm:orthogonality of morphisms - cotorsion pairs}(5)
and (6).

The third part is a variant of the proof of Theorem~\ref{thm:orthogonality of morphisms - cotorsion pairs}(6). Assume $M \in {^{\perp_1}\T}$. We have a conflation $0 \to K \la P \la M \to 0$ with $P$ projective. Theorem~\ref{thm:orthogonality of morphisms - cotorsion pairs}(4) yields a conflation $0 \to K \la T \la X \to 0$ with $T \in \T$ and $X \in \Filt\clS$. If we form the pushout diagram
$$
\xymatrix{ 0 \ar[r] & \xymoremargin{K} \ar[r] \arinfl[d] &
\xymoremargin{T} \ar[r] \arinfl[d] & X \ar[r] \ar@{=}[d] & 0
\\
0 \ar[r] & P \ar[r] \ardefl[d] & E \ar[r] \ardefl[d] \xypushout & X
\ar[r] & 0
\\
& M \ar@{=}[r] & M, }
$$
the middle column splits since $\Ext_\C^1(M,T)=0$, while the middle
row gives the desired conflation.
\end{proof}

\begin{rem} \label{rem:relation to El Bashir's result}
There is a well-known result by El Bashir~\cite{ElB,BEE} providing precovering classes under very general conditions. It says that a class $\F$ in a Grothendieck category $\G$ is precovering if it is closed under coproducts, direct limits, and there is a subset $\clS \subseteq \F$ \st each $X \in \F$ is a direct limit of objects of $\clS$.

It turns out that this is a consequence of our
Corollary~\ref{cor:consequences of Theorem}(1). Namely, one can show that there is a set $\clS' \subseteq \F$ \st $\F = \Filt\clS'$ in $\G$ considered with the exact structure formed by all $\kappa$-pure exact sequences for a suitable regular cardinal $\kappa$. This essentially follows from the theory for accessible categories~\cite[\S2]{AR} (cf.\ also Definition~\ref{def:accessible categories}) and~\cite[Lemma 3.1]{ElB}, since $\F$ as above can be checked to be accessible using standard arguments.

Several versions of El Bashir's result have appeared in the literature since then, to our best knowledge the most general of them being the one obtained in~\cite{Kr6} by Krause. We do not know whether Krause's result follows from our considerations, but our method has one advantage. As in~\cite{EGPT}, we do \emph{not} need our precovering classes to be closed under direct limits---an assumption essential in~\cite{ElB,Kr6}. Such an assumption would not allow us to prove that the class of locally projective quasi-coherent sheaves (see Example~\ref{expl:mock homotopy categories of projectives}) is precovering.
\end{rem}

\section{Cotorsion pairs in Frobenius exact categories}
\label{sec:frobenius}

All through this section, $\C$ is a {\bf Frobenius exact category}, that is, it has enough projectives and enough injectives in the sense of Section~\ref{subsec:exact categories}, and the injective and projective objects coincide.
We shall denote by $\stC$ the stable category of $\C$ modulo the projective objects. This is a triangulated category (see~\cite[\S I.2]{Ha}) with the suspension functor $?[1]=\Omega^{-1}$, the Heller functor, which assigns to each object $X$ the ``cosyzygy'' object $\Omega^{-1}X$ given by a conflation
$$ 0\to X\la I\la \Omega^{-1}X\to 0 $$
with $I$ injective. Note that $\Omega^{-1}X$ is uniquely determined (up to a unique isomorphism) in $\stC$, but not in $\C$. Such triangulated categories are called {\bf algebraic}, see~\cite{K,Kr}. The group of homomorphisms between objects $X$ and $Y$ in $\stC$ will be in a customary way denoted by $\stHom_\C(X,Y)$.

If $\X \subseteq \C$ is a full subcategory, we shall denote by $\stX$ the essential image of the composition $\X \infl \C \overset{\mathbf{p}}\la \stC$,
where $\mathbf{p}$ is the projection functor. Since $\C$ and $\stC$ have the same objects,
we can also view $\stX$ as a full subcategory of $\C$. The following lemma describes
 the relation between $\X$ and $\stX$ in $\C$:

\begin{lemma} \label{lem:projective closure}
An object $Y$ is in $\stX$ \iff there exist $X \in \X$ and $P,Q$
projective in $\C$ \st $X \oplus P \cong Y \oplus Q$ in $\C$.
\end{lemma}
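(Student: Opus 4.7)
\emph{Proof plan.} The direction $(\Leftarrow)$ is immediate: if $X\oplus P\cong Y\oplus Q$ in $\C$ with $P,Q$ projective, then both projective summands become zero in $\stC$, so $Y\cong X$ in $\stC$; consequently $X\in\X$ forces $Y\in\stX$.

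For $(\Rightarrow)$, assume $Y\in\stX$, so there exist $X\in\X$ and morphisms $f:X\la Y$ and $g:Y\la X$ in $\C$ whose images in $\stC$ are mutually inverse. Since $\C$ has enough injectives, we may fix an inflation $\iota:X\infl I$ with $I$ injective (hence also projective, $\C$ being Frobenius). The plan is to realise the single inflation $\binom{-f}{\iota}:X\la Y\oplus I$ as the left-hand map of a conflation
\[ 0\to X\xrightarrow{\begin{pmatrix}-f\\ \iota\end{pmatrix}} Y\oplus I\la P\to 0, \]
where $P$ is the pushout of the pair $(f,\iota)$. The existence of such a conflation is the standard fact that pushouts along inflations exist in exact categories and produce conflations of this shape, see~\cite[Proposition 2.12]{Bu}; in particular $\binom{-f}{\iota}$ is automatically an inflation.

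The key step is to argue that $P$ is projective in $\C$. Passing to $\stC$, the conflation yields a triangle $X\la Y\oplus I\la P\la X[1]$; since $I$ is projective, the summand $I$ vanishes in $\stC$ and the first map is identified with $-f:X\la Y$ up to the canonical isomorphism $Y\oplus I\cong Y$ in $\stC$. Because $-f$ is an isomorphism in $\stC$, the third vertex $P$ must be zero in $\stC$; that is, $1_P$ factors through a projective object of $\C$, whence $P$ is a direct summand of a projective and is therefore projective. Once this is established, the above conflation splits and gives $Y\oplus I\cong X\oplus P$; setting $Q:=I$ completes the argument. The only subtle point, and the main obstacle, is the implication ``$P\cong 0$ in $\stC$ $\Longrightarrow$ $P$ projective in $\C$'', which however is forced by the very definition of the stable category of a Frobenius exact category.
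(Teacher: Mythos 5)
Your argument is correct, and it is precisely the standard argument that the paper leaves implicit (its proof reads only ``This is easy and well-known''). The one point you flag as subtle is indeed fine: $P\cong 0$ in $\stC$ means $1_P$ factors through a projective, and a retract of a projective is projective by the usual lifting argument (pull back a deflation onto $P$ along the retraction to a deflation onto the projective, which splits), so the conflation $0\to X\to Y\oplus I\to P\to 0$ splits and yields $X\oplus P\cong Y\oplus I$ as claimed.
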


\begin{proof}
This is easy and well-known.
\end{proof}

Our goal here is to describe the interplay between cotorsion pairs in $\C$ and the corresponding concepts in $\stC$. To this end, we need the following general definition.

\begin{defi} \label{def:Hom-orthogonal pairs}
Let $\D$ be an additive category and $\clS$ be a class of objects. We denote $\clS^\perp = \{Y\in \D \mid \Hom_\D(S,Y)=0 \textrm{ for all } S\in\clS\}$ and dually ${^\perp \clS} = \{X\in \D \mid \Hom_\D(X,S)=0 \textrm{ for all } S\in\clS\}$. Then we call a pair $(\X,\Y)$ of full subcategories a {\bf Hom-orthogonal pair} if $\X^\perp = \Y$ and $\X = {^\perp \Y}$. Given $\X_0 \subseteq \X$ \st $\Y=\X_0^\perp$, we say that the Hom-orthogonal pair is {\bf generated by $\X_0$}.
\end{defi}

We see in the following proposition that Hom-orthogonal pairs in $\stC$ relate in a somewhat technical but very straightforward way to cotorsion pairs in $\C$. Here, by $\Omega X$ we mean a kernel of a deflation $P \defl X$ with $P$ projective, and $\Omega\F$ stands for a class $\{\Omega X \mid X \in \F\}$.

\begin{prop} \label{prop:Hom-orthogonal pairs versus cotorsion pairs}
Let $(\F,\T)$ be a pair of full subcategories of a Frobenius exact
category $\C$. The following assertions are equivalent:

\begin{enumerate}
\item $(\stF,\stT)$ is a cotorsion pair in $\C$;
\item $(\underline{\Omega\F},\stT)$ is a Hom-orthogonal pair in $\stC$.
\end{enumerate}
That is, the assignment $(\F,\T) \rightsquigarrow
(\underline{\Omega\F},\stT)$ establishes a bijective correspondence
between cotorsion pairs in $\C$ and Hom-orthogonal pairs in $\stC$.
Moreover, if  we put $(\X,\Y) = (\underline{\Omega\F},\stT)$, then
the following assertions are equivalent:

\begin{enumerate}
\item[(a)] $(\F,\T)$ is a complete cotorsion pair.
\item[(b)] For every object $M$ of $\C$, there is a conflation $0 \to M \la T \la F \to 0$ in $\C$ with $T\in\T$ and $F\in\F$.
\item[(c)] For every object $M$ of $\stC$, there is a triangle $X \la M \la Y \laplus$ in $\stC$ with $X\in\X$ and $Y\in\Y$.
\end{enumerate}
\end{prop}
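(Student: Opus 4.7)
The strategy is to reduce everything to the natural isomorphism
$$
\Ext_\C^1(A,B) \cong \stHom_\C(\Omega A, B),
$$
available in any Frobenius exact category, together with Lemma~\ref{lem:projective closure}, which ensures that $\Ext_\C^1(-,-)$ is invariant under stable equivalence and that $\stX$, regarded as a subcategory of $\C$, is precisely the closure of $\X$ under such equivalence. These two observations translate $\Ext^1$-orthogonality in $\C$ into $\stHom$-orthogonality in $\stC$ and back.

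For $(1)\Leftrightarrow(2)$ I would unpack the two orthogonality identities required by a (co)torsion pair. The identity $\stF^{\perp_1}=\stT$ in $\C$ says $Y\in\stT$ iff $\Ext_\C^1(F,Y)=0$ for every $F\in\F$; rewriting $\Ext_\C^1(F,Y)$ as $\stHom_\C(\Omega F,Y)$ turns this into $\underline{\Omega\F}^\perp=\stT$ in $\stC$. Dually, $\stF={}^{\perp_1}\stT$ says $X\in\stF$ iff $\Ext_\C^1(X,T)=0$ for every $T\in\T$; rewriting as $\stHom_\C(\Omega X,T)=0$ and using that $\Omega$ is an autoequivalence of $\stC$ yields $\underline{\Omega\F}={}^\perp\stT$ in $\stC$. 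The bijective correspondence then comes from the standard fact that the left and right classes of any cotorsion pair contain all projectives (equivalently, injectives, as $\C$ is Frobenius) and are closed under direct summands; by Lemma~\ref{lem:projective closure} this forces $\F=\stF$ and $\T=\stT$ as subcategories of $\C$, so the assignment is injective, while surjectivity is immediate by reversing the translation.

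Passing to $(a)\Leftrightarrow(b)\Leftrightarrow(c)$: the implication $(a)\Rightarrow(b)$ is tautological, and $(b)\Rightarrow(a)$ is Salce's classical trick. Given $M\in\C$, fix a conflation $0\to\Omega M\to P\to M\to 0$ with $P$ projective and apply $(b)$ to $\Omega M$ to get $0\to\Omega M\to T\to F\to 0$ with $T\in\T$, $F\in\F$. The pushout of the two inflations out of $\Omega M$ produces an object $E$ sitting in conflations $0\to P\to E\to F\to 0$ and $0\to T\to E\to M\to 0$; the former, combined with the fact that $\F$ contains the projective $P$ and is closed under extensions, forces $E\in\F$, so the latter supplies the required $\F$-precover sequence.

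Finally, $(b)\Leftrightarrow(c)$ is the translation between conflations in $\C$ and distinguished triangles in $\stC$. For $(b)\Rightarrow(c)$, the conflation $0\to M\to T\to F\to 0$ induces a triangle $M\la T\la F\laplus$ in $\stC$ whose rotation $\Omega F\la M\la T\laplus$ is precisely the triangle required by $(c)$, with $X=\Omega F\in\underline{\Omega\F}$ and $Y=T\in\stT$. For the converse, given a triangle $X\la M\la Y\laplus$ with $X\in\underline{\Omega\F}$ and $Y\in\stT$, choose $T\in\T$ representing $Y$, lift $M\la Y$ to a morphism $f:M\la T$ in $\C$, pick an inflation $\iota:M\infl I$ into an injective, and form the conflation
$$
0 \la M \xrightarrow{\binom{\iota}{f}} I\oplus T \la C \la 0.
$$
By the standard construction of distinguished triangles in an algebraic triangulated category, this conflation realizes the rotated triangle $M\la T\la X[1]\laplus$, so $C$ is stably isomorphic to $X[1]\in\stF$ and hence lies in $\F$; since $\T$ contains the injective $I$ and is closed under finite direct sums, $I\oplus T\in\T$, delivering $(b)$. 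The main subtlety I expect to have to handle carefully is precisely this last identification of $C$ with $X[1]$ in $\stC$: it rests on the uniqueness (up to non-canonical isomorphism) of the third vertex of a distinguished triangle and on the defining property of $\stC$ as the quotient of $\C$ by the ideal of morphisms factoring through projectives, and it requires that the chosen lift $f$ genuinely represents the original morphism $M\la Y$ once $Y$ has been transported to $T$.
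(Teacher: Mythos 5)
Your proposal is correct, and for the equivalence $(1)\Leftrightarrow(2)$ and the implications (a)$\Rightarrow$(b)$\Rightarrow$(c) it coincides with the paper's argument: the isomorphism $\Ext^1_\C(X,Y)\cong\stHom_\C(\Omega X,Y)$, the observation via Lemma~\ref{lem:projective closure} that $\F=\stF$ and $\T=\stT$ for any cotorsion pair, and the rotation of the triangle induced by a conflation. Where you differ is in closing the cycle. The paper proves (c)$\Rightarrow$(a) directly: it applies hypothesis (c) twice, to $M$ and to $\Omega M$, rotates both triangles, and reads off the two approximation conflations from the explicit construction of triangles in $\stC$. You instead prove (c)$\Rightarrow$(b) by realizing a single rotated triangle as the conflation $0\to M\to I\oplus T\to C\to 0$ and identifying $C$ with $X[1]$ up to stable isomorphism (the subtlety you correctly flag, resolved by the non-canonical uniqueness of the cone and by $\F=\stF$), and then you get (b)$\Rightarrow$(a) from Salce's pushout trick applied to a projective presentation $0\to\Omega M\to P\to M\to 0$, using that $\F$ contains projectives and is closed under extensions. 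Both routes are sound; yours trades the second application of (c) for the Salce argument (which the paper itself deploys later, in the proof of Theorem~\ref{thm:orthogonality of morphisms - cotorsion pairs}(5)), and has the small advantage of establishing (b)$\Rightarrow$(a) as a standalone implication rather than only as a consequence of the full cycle.
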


\begin{proof}
Since $\C$ is a Frobenius exact category, we have a natural isomorphism
$$ \stHom_\C(\Omega X,Y) \cong \Ext_\C^1(X,Y) $$
for all $X,Y \in \C$. The equivalence $(1)\Longleftrightarrow (2)$ follows easily. For the bijective correspondence, note that if $(\F,\T)$ is a cotorsion pair in $\C$, then $\F = \stF$ and $\T =
\stT$.

Regarding the equivalence between (a), (b) and (c), the implication (a) $\implies$ (b) is obvious. To show (b) $\implies$ (c), note that the conflation $0\to M\la T\la F\to 0$ gives a triangle $M\la T\la F\laplus$, where $T \in \stT = \Y$ and $F \in \stF = \X[1]$. Hence the desired triangle is obtained just by shifting.

Finally for (c) $\implies$ (a), we take the triangles for given $M$
and its syzygy $\Omega M$:
$$
X \la M \la Y \laplus
\qquad \textrm{and} \qquad
X'\la \Omega M \la Y' \laplus
$$
By shifting the triangles correspondingly, we obtain $M \la Y \la
X[1] \laplus$ and $Y' \la X'[1] \la (\Omega M)[1] \laplus$. Note
that $M \cong (\Omega M)[1]$ in $\stC$, and further we have $X[1],
X'[1] \in \X[1] = \F$ and $Y, Y' \in \Y = \T$. Using the explicit
construction of triangles in $\stC$, we obtain conflations
$$ 0 \to M \la Y\oplus Q \la \Omega^{-1}X \oplus P \to 0, $$
$$ 0 \to Y'\oplus Q' \la \Omega^{-1}X'\oplus P' \la M \to 0, $$
with $P,P',Q,Q'$ projective. Using Lemma~\ref{lem:projective
closure} and the equalities $\stF=\F$ and $\stT=\T$, we get that the
cotorsion pair $(\F,\T)$ is complete.
\end{proof}

\begin{warn} \label{warn:warning for t-structures}
The approximation triangles from Proposition~\ref{prop:Hom-orthogonal pairs versus cotorsion pairs}(c) are \emph{not} necessarily unique up to isomorphism in general, see Proposition~\ref{prop:uniqueness of the triangle} later in the section.
\end{warn}

A direct consequence of Proposition~\ref{prop:Hom-orthogonal pairs
versus cotorsion pairs} is the following.

\begin{cor} \label{cor:Distinguished triangle of a Hom-orthogonal pair}
Let $\C$ be an efficient Frobenius exact category (eg.\ $\C = \Cpx\A$ as in Example~\ref{expl:semisplit structure}). Assume further that $\X_0 \subseteq \C$ is a set of objects and $(\X,\Y)$ is the Hom-orthogonal pair in $\stC$ generated by $\X_0$. Then each object $M \in \stC$ appears in a triangle
$$ X\la M\la Y\laplus, $$
where $X\in\X$ and $Y\in\Y$. Moreover $X$ can be chosen, as an object of $\C$, to be $\X_0$-filtered.
\end{cor}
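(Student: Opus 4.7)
The plan is to reduce the statement to Theorem~\ref{thm:orthogonality of morphisms - cotorsion pairs}(4) applied to a syzygy of $M$, and then rotate the resulting triangle. Concretely, fix a representative of $M$ in $\C$ together with a conflation $0\to \Omega M \la P \la M\to 0$ with $P$ projective. Using axiom (Ax3) applied to the set $\X_0$, pick a strongly homological set of inflations $\I$ with $\Coker(\I) = \X_0$ (up to isomorphism). Theorem~\ref{thm:orthogonality of morphisms - cotorsion pairs}(4) applied to $\Omega M$ then yields a conflation
$$ 0 \to \Omega M \la T \la X \to 0 $$
in $\C$ with $T \in \X_0^{\perp_1}$ and $X \in \ICells$; by part~(2) of the same theorem $X \in \Filt\X_0$, since $\I$ is strongly homological. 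In $\stC$ this becomes a triangle $\Omega M \to T \to X \to (\Omega M)[1]$, and since $(\Omega M)[1] \cong M$ in $\stC$, rotating produces the candidate triangle $X\la M\la T[1]\la X[1]$, in which $X$ is already $\X_0$-filtered as an object of $\C$.

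It remains to verify that $T[1]\in\Y$ and $X\in\X$. The first is a quick consequence of the standard Frobenius isomorphism $\Hom_\stC(X_0, T[1]) = \stHom_\C(X_0,\Omega^{-1}T) \cong \Ext^1_\C(X_0,T)$, which vanishes for every $X_0 \in \X_0$ by the choice of $T$.

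The main obstacle is showing $X \in \X = {^\perp\Y}$. A naive induction along the $\X_0$-filtration would require closure of $\X$ under transfinite compositions in $\stC$, which is not available because $\stC$ has no such colimits. The fix is to transfer the Hom-orthogonality in $\stC$ back to an $\Ext^1$-condition in $\C$: since $\Omega$ is an equivalence of $\stC$, combining $\Hom_\stC(X,Y) = \Hom_\stC(\Omega X,\Omega Y) = \stHom_\C(\Omega X,\Omega Y)$ with the standard dimension-shift isomorphism $\stHom_\C(\Omega -,-) \cong \Ext^1_\C(-,-)$ yields
$$ \Hom_\stC(X,Y) \cong \Ext^1_\C(X,\Omega Y) $$
for any $Y \in \Y$. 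Applying the very same identity to elements of $\X_0$ gives $\Ext^1_\C(X_0,\Omega Y) \cong \Hom_\stC(X_0,Y) = 0$. Now Eklof's lemma (Proposition~\ref{prop:eklof lemma}) applied to the $\X_0$-filtration of $X$ in $\C$ forces $\Ext^1_\C(X,\Omega Y) = 0$, hence $\Hom_\stC(X,Y) = 0$, and thus $X \in \X$. The ``moreover'' clause is then immediate from the construction.
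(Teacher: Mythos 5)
Your proposal is correct and follows essentially the same route as the paper: apply Theorem~\ref{thm:orthogonality of morphisms - cotorsion pairs}(4) to a syzygy $\Omega M$, rotate the resulting triangle, and use Proposition~\ref{prop:eklof lemma} to place $X$ in $\X$. The only cosmetic difference is that the paper packages the identifications $T[1]\in\Y$ and $\stF=\X$ through the correspondence of Proposition~\ref{prop:Hom-orthogonal pairs versus cotorsion pairs}, whereas you verify them directly via the dimension-shift isomorphisms.
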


\begin{proof}
Obviously, the cotorsion pair $(\F,\T)$ in $\C$ corresponding to the Hom-orthog\-onal pair $(\X[-1],\Y[-1])$ in $\stC$ via
Proposition~\ref{prop:Hom-orthogonal pairs versus cotorsion pairs} is the cotorsion pair generated by the set $\X_0$.
Theorem~\ref{thm:orthogonality of morphisms - cotorsion pairs}(4)  gives us a conflation
$$ 0 \to \Omega M \la T \la X\to 0, $$
where $X\in \Filt\X_0$ and $T\in\T$. After shifting it, we get a triangle $X \la (\Omega M)[1] \la T[1] \laplus$ in $\stC$. Here again, $(\Omega M)[1] \cong M$ in $\stC$, $X \in \F = \X$ by Proposition~\ref{prop:eklof lemma}, and $T[1] \in \T[1] = \Y$.
\end{proof}

\subsection{Cohereditary cotorsion pairs, t-structures and localizations}
\label{subsec:t-struct}

Now we will build up a dictionary between the language of cotorsion pairs and well known concepts related to triangulated categories. Let us recall definitions from~\cite[\S1.3]{BBD} and~\cite[\S6]{KS} (we also refer to~\cite{HRS,BR} and~\cite[Chapter 9]{Nee2} for more information on the notions):

\begin{defi} \label{def:t-struct}
Let $\D$ be a triangulated category with suspension $?[1]$. A full subcategory $\X$ is called {\bf suspended} if it is closed under extensions, direct summands, and
$\X[1]\subseteq\X$. A pair $(\X,\Y)$ of full subcategories is called a {\bf t-structure} if
\begin{enumerate}
    \item $(\X[1],\Y)$ is a Hom-orthogonal pair in $\D$,
    \item $\X[1] \subseteq \X$, and
    \item each $M \in \D$ lies in a triangle of the form
    $$ X[1]\longrightarrow M\longrightarrow Y\stackrel{+}{\longrightarrow}, $$
  with $X\in\X$ and $Y\in\Y$.
\end{enumerate}
Note that assuming (1), condition (2) is equivalent to saying that $\X$ is suspended. A t-structure $(\X,\Y)$ is called a {\bf Bousfield localizing pair} if $\X$ is a triangulated subcategory of $\T$, or equivalently if condition (2) above is replaced by:
\begin{enumerate}
    \item[(2')] $\X = \X[1]$.
\end{enumerate}
\end{defi}


\begin{rem}
The term Bousfield localizing pair is based on the terminology used by Neeman~\cite{Nee2} and Krause~\cite{Kr3}. Such pairs are also called {\bf semi-orthogonal decompositions} in the literature, following the terminology by Bondal and Orlov~\cite{BO}.
\end{rem}

\begin{rem} \label{rem:functorial triangles}
If $(\X,\Y)$ is a t-structure, then the triangle from Definition~\ref{def:t-struct}(3) is unique up to a unique isomorphism, hence functorial. Note that the factorization of morphisms in Proposition~\ref{prop:small object argument} can be made functorial (see~\cite[Theorem 2.1.14]{H2}) and, thus, so can the triangle from Corollary~\ref{cor:Distinguished triangle of a Hom-orthogonal pair}. However, this functoriality is not canonical, for it depends on choices made in the construction, and has not proved to be very useful so far. What really seems to matter is the uniqueness, which provides us naturally with adjoints to the inclusions $\X \infl \D$ and $\Y \infl \D$.
\end{rem}

Now we establish  a bijective correspondence between t-structures and certain cotorsion pairs. For this purpose, we call a cotorsion pair $(\F,\T)$ {\bf hereditary} if $\Omega \F \subseteq \F$ and {\bf cohereditary} if $\Omega^{-1} \F \subseteq \F$.

\begin{prop} \label{prop:uniqueness of the triangle}
Let $\C$ be a Frobenius exact category  and denote by $\clP$ the full subcategory of all projective objects. Further, let $(\F,\T)$ be a cotorsion pair in $\C$ and $(\X,\Y) = (\underline{\Omega\F},\stT)$ the associated Hom-orthogonal pair in
$\stC$ as in Proposition~\ref{prop:Hom-orthogonal pairs versus cotorsion pairs}. Then the following assertions are equivalent:

\begin{enumerate}
\item $(\F,\T)$ is a complete cotorsion pair and $\F \cap \T = \clP$;
\item $(\F,\T)$ is a complete cohereditary cotorsion pair;
\item For each $M \in \stC$, there is a triangle $X \la M \la Y \laplus$ in $\stC$ with $X \in \X$ and $Y \in \Y$, which is unique up to isomorphism;
\item The pair $(\X[-1],\Y)$ is a t-structure in $\stC$
\end{enumerate}
In particular, the assignment $(\F,\T) \rightsquigarrow (\X[-1],\Y) = (\underline{\Omega^2 \F},\stT)$ gives a bijective correspondence between complete cohereditary cotorsion pairs in $\C$ and
t-structures in $\stC$. It restricts to a bijective correspondence $(\F,\T) \rightsquigarrow (\X[-1],\Y) = (\stF,\stT)$ between
complete hereditary and cohereditary cotorsion pairs in $\C$ and Bousfield localizing pairs in $\stC$.
\end{prop}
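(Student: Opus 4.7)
The plan is to establish the chain $(1)\Leftrightarrow(2)\Leftrightarrow(4)\Leftrightarrow(3)$ and then derive the two bijective correspondences. The structural device throughout is Proposition~\ref{prop:Hom-orthogonal pairs versus cotorsion pairs}, which identifies completeness of $(\F,\T)$ with the existence of approximation triangles $X\la M\la Y\laplus$ in $\stC$ (with $X\in\X$, $Y\in\Y$) and which already gives the bijection between cotorsion pairs in $\C$ and Hom-orthogonal pairs in $\stC$.

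For $(1)\Leftrightarrow(2)$ I work in $\C$ using the conflation $0\to F\to I\to\Omega^{-1}F\to 0$ with $I$ projective-injective. The direction $(2)\Rightarrow(1)$ is immediate: for $Z\in\F\cap\T$ cohereditariness gives $\Omega^{-1}Z\in\F$, so $\Ext_\C^1(\Omega^{-1}Z,Z)=0$ and the displayed conflation (with $F=Z$) splits, exhibiting $Z$ as a summand of $I\in\clP$. For $(1)\Rightarrow(2)$, I apply completeness to $\Omega^{-1}F$ to obtain $0\to T\to G\to \Omega^{-1}F\to 0$ with $G\in\F$, $T\in\T$, and form the pullback of $I\defl\Omega^{-1}F$ along $G\defl\Omega^{-1}F$. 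The middle column $0\to T\to E\to I\to 0$ splits as $I$ is projective, giving $E\cong T\oplus I$; the middle row $0\to F\to E\to G\to 0$ shows $E\in\F$, so $T\in\F\cap\T=\clP$ as a summand. Since $\C$ is Frobenius, $T$ is also injective, so $0\to T\to G\to\Omega^{-1}F\to 0$ splits and exhibits $\Omega^{-1}F$ as a summand of $G\in\F$.

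For $(2)\Leftrightarrow(4)$, I combine the triangle-existence translation of Proposition~\ref{prop:Hom-orthogonal pairs versus cotorsion pairs} with the observation that the suspended condition $\X=\X[-1][1]\subseteq\X[-1]$, i.e., $\underline{\Omega\F}\subseteq\underline{\Omega^2\F}$, unpacks---using that $\F$ is closed under summands and contains $\clP$---to $\Omega^{-1}\F\subseteq\F$. The implication $(4)\Rightarrow(3)$ is the standard uniqueness of approximation triangles in a t-structure (cf.\ Remark~\ref{rem:functorial triangles}). The converse $(3)\Rightarrow(4)$ is the most delicate step: starting from uniqueness, one must extract the shift-closure $\X[1]\subseteq\X$. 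I would argue this by applying uniqueness to the $(3)$-triangle of $X_0[1]$ for $X_0\in\X$, together with uniqueness applied to auxiliary objects of the form $X_0\oplus Y_0$ with $Y_0\in\Y$, to force $\Hom(\X,\Y[-1])=0$, equivalently $\Y[-1]\subseteq\Y$, which is equivalent to $\X[1]\subseteq\X$.

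The two bijective correspondences then follow formally. The first, between complete cohereditary cotorsion pairs in $\C$ and t-structures in $\stC$, is immediate from $(2)\Leftrightarrow(4)$ combined with Proposition~\ref{prop:Hom-orthogonal pairs versus cotorsion pairs}. For the Bousfield restriction, $(\X[-1],\Y)$ is a Bousfield localizing pair iff $\X[-1]$ is a triangulated subcategory; given the suspended condition already in force, this reduces to $\X[-1]=\X$, i.e., $\underline{\Omega^2\F}=\underline{\Omega\F}$, which by an argument dual to the cohereditary case amounts to $\Omega\F\subseteq\F$, i.e., $\F$ being hereditary. The principal obstacle in the whole proof is the step $(3)\Rightarrow(4)$: the hypothesis of uniqueness of approximation triangles is structurally subtle and teasing the shift-closure out of it requires care with the triangulated axioms, whereas the remaining implications are a direct diagram chase or a formal translation.
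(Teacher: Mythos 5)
Most of your proposal is sound and close in spirit to the paper's argument: the implications $(1)\Leftrightarrow(2)$ (your pullback along $I\defl\Omega^{-1}F$ is a valid, if slightly longer, substitute for the paper's one-line observation that in $0\to M\to T\to F\to 0$ the middle term $T$ lies in $\F\cap\T=\clP$ by extension-closure of $\F$), the translation $(2)\Leftrightarrow(4)$, the standard $(4)\Rightarrow(3)$, and the two bijective correspondences all check out.

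The genuine gap is exactly the step you flag as delicate, $(3)\Rightarrow(4)$: you do not actually prove it, you only announce a plan (``applying uniqueness to the triangle of $X_0[1]$ \dots together with auxiliary objects $X_0\oplus Y_0$ \dots to force $\Hom(\X,\Y[-1])=0$''), and as stated that plan is doubtful. From uniqueness alone the decompositions of the \emph{zero object} give you only $\X[1]\cap\Y=0$ (every $Z\in\X[1]\cap\Y$ yields a triangle $Z[-1]\to 0\to Z\laplus$ competing with the all-zero triangle); upgrading the vanishing of this \emph{intersection} to the vanishing of the \emph{Hom-spaces} $\Hom(\X[1],\Y)$ is not a formal consequence of the triangulated axioms --- it genuinely uses that $(\F,\T)$ is a complete cotorsion pair closed under extensions, which is precisely what your own implication $(1)\Rightarrow(2)$ exploits. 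The paper avoids the difficulty by closing the cycle as $(3)\Rightarrow(1)$ instead: for $Z\in\F\cap\T$ the conflation $0\to\Omega Z\to P\to Z\to 0$ with $P$ projective yields the triangle $\Omega Z\la 0\la Z\laplus$ in $\stC$ with $\Omega Z\in\X=\underline{\Omega\F}$ and $Z\in\Y=\stT$; uniqueness against the zero triangle forces $Z\cong 0$ in $\stC$, i.e.\ $Z\in\clP$, giving $(1)$, and then $(1)\Rightarrow(2)\Rightarrow(4)$ completes the circle. You should either adopt this route or supply an actual argument for your claimed derivation of $\Hom(\X,\Y[-1])=0$; as written, the one place where the uniqueness hypothesis must do real work is left unproved.
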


\begin{proof}
$(1)\Longrightarrow (2)$. Assume $M \in \F$. We have to prove that $\Omega^{-1}M \in \F$ for some (or equivalently any) cosyzygy of $M$. Let $0 \to M \longrightarrow T \longrightarrow F\to 0$ be a conflation with $T \in \T$ and $F \in \F$. Then $T \in \F \cap \T = \clP$, so $F$ is a cosyzygy of $M$.

$(2)\Longrightarrow (4)$. From condition (2) we get that the Hom-orthogonal pair $(\X,\Y)$ in $\stC$ satisfies $\X[1] \subseteq \X$ and that every object $M$ of $\stC$ is the central term of a triangle
$$ X \la M \la Y \laplus, $$
with $X \in \X$ and $Y \in \Y$, see Proposition~\ref{prop:Hom-orthogonal pairs versus cotorsion pairs}. Then $(\X[-1],\Y)$ is a t-structure in $\stC$.

$(4)\Longrightarrow (3)$. This is a well-known basic property of t-structures, see Remark~\ref{rem:functorial triangles}.

$(3)\Longrightarrow (1)$. By Proposition \ref{prop:Hom-orthogonal pairs versus cotorsion pairs}, we know that $(\F,\mathcal{T})$ is a complete cotorsion pair. If now $Z\in\F\cap\mathcal{T}$ and $0 \to \Omega Z \longrightarrow P \longrightarrow Z \to 0$ is a conflation with $P$ projective, we get the following triangle in $\underline{\mathcal{C}}$:
$$ \Omega Z \la 0 \la Z \laplus. $$
Note that we have $\Omega Z \in \underline{\Omega\F} = \X$ and $Z \in \stT = \Y$. But there is another triangle with the middle term equal to zero, namely the one whose all terms are zero objects. The uniqueness property implies that $Z \cong 0$ in
$\underline{\mathcal{C}}$. Hence, $Z\in\mathcal{P}$ and so $\mathcal{S} \cap \mathcal{T} = \mathcal{P}$.

Finally, the fact that $(\F,\T) \rightsquigarrow (\X[-1],\Y)$ gives a bijective correspondence between the classes in the statement follows immediately from the facts above and and the equalities $\F = \stF$ and $\T = \stT$ for any cotorsion pair $(\F,\T)$ in $\C$.
\end{proof}

\begin{rem}
There are several results available in the literature which are formally similar to Proposition~\ref{prop:uniqueness of the triangle}, but they do not imply it.
See for instance \cite[Corollary V.3.8]{BR}, \cite[Theorem 7.12]{Bel}, \cite[Proposition 0.4]{KS}, \cite[Corollary 6.3]{KS} or~\cite[Proposition A.4]{KS}.
\end{rem}

By combining Proposition~\ref{prop:uniqueness of the triangle} with Corollary~\ref{cor:Distinguished triangle of a Hom-orthogonal pair}, one sees that each set $\X_0$ of objects of an efficient Frobenius exact category $\C$ gives rise to a t-structure via the cohereditary cotorsion pair generated by $\X_0$. We apply this observation in \S\ref{subsec:well generated categories}.

There are, however, interesting t-structures coming from cotorsion pairs which are \emph{not} generated by a set. The Bousfield localizing pair $(\Htp{\Flat{R}}, \Htp{\Flat{R}}^\perp)$ in $\stC = \Htp{\Mod{R}}$ studied in~\cite{Nee} may serve as an example. In fact, $\Htp{\Flat{R}}$ is generated by a set \iff $R$ is a right perfect ring by~\cite[Proposition 4.4 and Theorem 5.2]{St}.  This phenomenon will be discussed more in detail in~\S\ref{subsec:adjoints without Brown representability}. A key tool to overcome the problem of not having a generating set is the following result by Neeman~\cite{Nee}, a special version of which can be found already in~\cite[Proposition 1.3]{KV} by Keller and Vossieck:

\begin{prop} \label{prop:Neeman's}
Let $\D$ be a triangulated category with splitting idempotents and
$\X \subseteq \D$ be a precovering suspended full subcategory. Then
the inclusion functor $\X \infl \D$ has a right adjoint. In
particular, $(\X,\Y)$ is a t-structure, where $\Y = \X^\perp [1]$.
\end{prop}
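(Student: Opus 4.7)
My plan is to construct, for each $M \in \D$, a universal morphism $\xi_M: R(M) \to M$ with $R(M) \in \X$, meaning that $\Hom_\D(X', R(M)) \to \Hom_\D(X', M)$ is a bijection for every $X' \in \X$. Such a universal object on each $M$ simultaneously provides the right adjoint to the inclusion and determines the counit.

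Starting from an $\X$-precover $f: X \to M$ (which exists by the precovering hypothesis), I complete it to a triangle $X \xrightarrow{f} M \to Y \to X[1]$ in $\D$. The precover property already yields surjectivity of $\Hom_\D(X', f)$ for $X' \in \X$, so the universal property will hold exactly when $Y \in \X^\perp[1]$, i.e.\ $\Hom_\D(X', Y[-1]) = 0$ for all $X' \in \X$. The crucial observation is that $X'[1]$ also lies in $\X$ because $\X$ is suspended; applying the precover property at $X'[1]$ gives surjectivity of $\Hom_\D(X'[1], X) \to \Hom_\D(X'[1], M)$, which by the long exact sequence associated with the triangle forces the connecting map $\Hom_\D(X'[1], Y) \to \Hom_\D(X'[1], X[1])$ to be injective. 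Shifting down, this identifies $\Hom_\D(X', Y[-1])$ with the subgroup of $\Hom_\D(X', X)$ consisting of those $h$ satisfying $fh = 0$.

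To improve the precover so that this kernel vanishes, given any obstruction $h: X' \to X$ with $X' \in \X$ and $fh = 0$, I extend $h$ to a triangle $X' \xrightarrow{h} X \xrightarrow{p} \tilde X \to X'[1]$. Since $\X$ is suspended (closed under extensions, and closed under summands in view of splitting idempotents), $\tilde X \in \X$. The equality $fh = 0$ lets $f$ factor as $\tilde f \circ p$ with $\tilde f: \tilde X \to M$ still an $\X$-precover, but now $ph = 0$ by the triangle axioms, so the specific obstruction $h$ has been killed. Iterating this refinement and collating the accumulated endomorphisms into a single idempotent whose splitting isolates the ``good'' summand produces the desired object $R(M) \in \X$ together with the universal morphism $\xi_M: R(M) \to M$. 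This assembly is the main technical step and is precisely where the splitting-idempotents hypothesis enters essentially: it allows a transfer from a would-be ``direct limit'' of refinements to a single well-defined object of $\X$ despite the absence of general colimits in $\D$.

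Finally, the triangle $R(M) \xrightarrow{\xi_M} M \to Y_M \to R(M)[1]$ then has $Y_M \in \X^\perp[1] = \Y$, so conditions~(1)--(3) of Definition~\ref{def:t-struct} hold for $(\X, \Y)$: (1) by definition of $\Y$, (2) from the suspended hypothesis, and (3) via the triangle just produced. Functoriality of the assignment $M \mapsto R(M)$ follows automatically from the universal property of $\xi_M$, yielding the right adjoint to $\X \infl \D$. The principal obstacle is the iterative idempotent-splitting argument in the third paragraph; this is the technical heart of the proof and the reason for the splitting-idempotents hypothesis, exactly as in Neeman's original argument in~\cite{Nee}.
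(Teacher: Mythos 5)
Your reduction of the problem is correct and is the standard first move: the right adjoint exists once every $M$ admits a morphism $\xi_M\colon R(M)\to M$ with $R(M)\in\X$ inducing bijections $\Hom_\D(Z,R(M))\to\Hom_\D(Z,M)$ for all $Z\in\X$, and for a precover $f\colon X\to M$ with triangle $X\to M\to Y\to X[1]$ the failure of bijectivity is measured exactly by $\Hom_\D(Z,Y[-1])\cong\ker\big(\Hom_\D(Z,X)\to\Hom_\D(Z,M)\big)$; your use of $Z[1]\in\X$ to get injectivity of the connecting map is the right observation. The gap is in your third paragraph, which is where the entire content of the proposition lies. Killing one obstruction $h$ at a time does not converge: passing from $X$ to $\tilde X=\mathrm{Cone}(h)$ may create new elements of $\ker(\tilde f_*)$ at $X'$ and at every other object of $\X$ (a proper class of them), no quantity decreases, and a triangulated category has no transfinite compositions over which to ``iterate''. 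The proposed rescue --- collating the accumulated endomorphisms into a single idempotent whose splitting isolates the good summand --- is not an argument: you never exhibit an idempotent, and the endomorphisms that do arise (composites $\sigma p\colon X\to\tilde X\to X$, where $\sigma$ comes from factoring $\tilde f$ back through the precover $f$) satisfy $f\sigma p=f$ but are not idempotent and have no reason to stabilize. This is precisely the step for which the paper defers to Neeman's proof of \cite[Proposition 1.4]{Nee}.

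The move your sketch is missing is to treat all obstructions at once: by your own identification, every $h\colon Z\to X$ with $fh=0$ factors through the connecting map $w\colon Y[-1]\to X$, so after choosing a single $\X$-precover $e\colon E\to Y[-1]$, every obstruction factors through the one map $we\colon E\to X$, and one passes to its cone. Even then the verification is genuinely delicate: the new obstruction group $\ker(\tilde f_*)$ embeds via the connecting map only into $\Hom_\D(Z,E[1])$, which need not vanish since $Z[-1]$ need not lie in $\X$, so one is not done after a single refinement; this residual difficulty is what Neeman's actual argument resolves, and it is not resolved by splitting an unspecified idempotent. A small additional misstatement: closure of $\X$ under direct summands is part of the definition of suspended subcategory in Definition~\ref{def:t-struct}, not a consequence of idempotents splitting in $\D$ (that hypothesis is needed elsewhere, e.g.\ to show $\X[1]={}^{\perp}\Y$ from the approximation triangles).
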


\begin{proof}
To get the right adjoint, the proof of~\cite[Proposition 1.4]{Nee} can be copied mutatis mutandis. The fact that $\X$ is the left hand class of a t-structure follows in a standard way; see~\cite[\S1]{KV}.
\end{proof}

We now have a useful consequence. Note that a typical case when the conditions below are met is when $\C = \Cpx\G$ for a Grothendieck category $\G$, $\mathbf{E}$ is the class of all componentwise split short exact sequences in $\Cpx\G$ and $\mathbf{E}'$ is the class of all short exact sequences in $\Cpx\G$. We also note that in some situations, a more direct way to compute the right adjunction to $\stF \infl \stC$ has been obtained in~\cite{EBIJR}.

\begin{cor} \label{cor:moving between exact structures}
Let $\C$ be an additive category on which we consider two exact
structures, $\C_\mathbf{E}$ and $\C_{\mathbf{E}'}$, given by classes
of conflations $\mathbf{E} \subseteq \mathbf{E}'$. Suppose that the
following two conditions hold:
\begin{enumerate}
\item $\mathcal{C}_{\mathbf{E}'}$ is an efficient exact category (see Definition~\ref{def:efficient exact category}),
\item $\mathcal{C}_\mathbf{E}$ is a Frobenius exact category.
\end{enumerate}
If $\clS$ is a set of objects and $\F$ is the class of
$\clS$-filtered objects in $\C_{\mathbf{E'}}$, then
$(\underline{\F},\underline{\F}^{\perp}[1])$ is a t-structure
(resp.\ a Bousfield localizing pair) in $\stC_\mathbf{E}$ whenever
$\underline{\Omega_\mathbf{E}^{-1}(\F)} \subseteq \underline{\F}$
(resp. $\underline{\Omega_\mathbf{E}^{-1}(\F)} = \underline{\F}$).
\end{cor}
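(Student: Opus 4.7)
The plan is to apply Neeman's Proposition~\ref{prop:Neeman's} to the class $\underline{\F}$ viewed in the triangulated category $\stC_\mathbf{E}$. For this I would need to verify three things: that $\stC_\mathbf{E}$ has splitting idempotents, that $\underline{\F}$ is precovering in $\stC_\mathbf{E}$, and that $\underline{\F}$ is a suspended subcategory of $\stC_\mathbf{E}$. Split idempotents come for free from the Frobenius structure on $\C_\mathbf{E}$.

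For the precovering property, I would invoke Corollary~\ref{cor:consequences of Theorem}(1) applied to the efficient exact category $\C_{\mathbf{E}'}$, which gives that $\F=\Filt\clS$ is precovering in $\C_{\mathbf{E}'}$. Since $\C$, $\C_\mathbf{E}$ and $\C_{\mathbf{E}'}$ share the same objects and morphisms, any $\F$-precover in $\C_{\mathbf{E}'}$ is an $\F$-precover in $\C$, and the canonical projection $\C\to\stC_\mathbf{E}$ (being the identity on objects and surjective on morphisms) turns it into an $\underline{\F}$-precover in $\stC_\mathbf{E}$.

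For the suspension property, there are three conditions to check. First, $\underline{\F}[1]\subseteq\underline{\F}$ is literally the hypothesis, because the suspension in $\stC_\mathbf{E}$ is the Heller functor $\Omega_\mathbf{E}^{-1}$. Second, closure of $\underline{\F}$ under triangulated extensions in $\stC_\mathbf{E}$ is the key place where the inclusion $\mathbf{E}\subseteq\mathbf{E}'$ enters: every distinguished triangle in $\stC_\mathbf{E}$ is represented (up to isomorphism in $\stC_\mathbf{E}$) by an $\mathbf{E}$-conflation, which is also an $\mathbf{E}'$-conflation, and $\F=\Filt\clS$ is closed under $\mathbf{E}'$-extensions by Corollary~\ref{cor:filtrations closed under transf ext} (extending a one-step filtration by another $\clS$-filtration). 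Third, closure under direct summands in $\stC_\mathbf{E}$ is automatic once we verify Hom-orthogonality with $\underline{\F}^{\perp}$; equivalently one can work from the outset with the summand closure of $\underline{\F}$, which has the same orthogonal class.

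With these three properties verified, Proposition~\ref{prop:Neeman's} produces a right adjoint to $\underline{\F}\hookrightarrow\stC_\mathbf{E}$ and exhibits $(\underline{\F},\underline{\F}^{\perp}[1])$ as a t-structure. The Bousfield localizing statement then follows immediately: under the strengthened hypothesis $\underline{\Omega_\mathbf{E}^{-1}(\F)}=\underline{\F}$, the inclusion $\underline{\F}[1]\subseteq\underline{\F}$ becomes an equality, so $\underline{\F}$ is a triangulated subcategory of $\stC_\mathbf{E}$, which by Definition~\ref{def:t-struct} upgrades the t-structure to a Bousfield localizing pair. The main technical subtlety throughout is bookkeeping the interplay between the two exact structures: filtrations are built out of $\mathbf{E}'$-conflations, whereas triangles in $\stC_\mathbf{E}$ come from $\mathbf{E}$-conflations, and the inclusion $\mathbf{E}\subseteq\mathbf{E}'$ is used in both the extension-closure step and implicitly in turning $\C_{\mathbf{E}'}$-precovers into $\stC_\mathbf{E}$-precovers.
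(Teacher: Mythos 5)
Your strategy is the paper's: verify the hypotheses of Proposition~\ref{prop:Neeman's} for $\D=\stC_\mathbf{E}$ and $\X=\underline{\F}$, and you handle the precovering step (via Corollary~\ref{cor:consequences of Theorem} applied to $\C_{\mathbf{E}'}$) and the extension-closure step (via $\mathbf{E}\subseteq\mathbf{E}'$ and Corollary~\ref{cor:filtrations closed under transf ext}) essentially as the paper does. There are, however, two genuine gaps, and both trace back to the same omission: you never establish that $\underline{\F}$ is closed under coproducts in $\stC_\mathbf{E}$. First, split idempotents do \emph{not} ``come for free from the Frobenius structure'': the stable category of a Frobenius exact category need not be idempotent complete in general. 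What actually yields them here is that $\C_{\mathbf{E}'}$ is efficient, so $\C$ has arbitrary coproducts by Lemma~\ref{lem:existence of exact coproducts}, hence so does the triangulated category $\stC_\mathbf{E}$, and a triangulated category with countable coproducts has split idempotents by~\cite[I.6.8]{Nee2}.

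Second, your treatment of closure under direct summands does not suffice. ``Suspended'' in Definition~\ref{def:t-struct} includes closure under summands, and the conclusion you are after asserts $\underline{\F}={}^{\perp}\bigl(\underline{\F}^{\perp}\bigr)$, so it is $\underline{\F}$ itself that must be summand-closed. Deducing this from ``Hom-orthogonality with $\underline{\F}^{\perp}$'' is circular, since that orthogonality is precisely what Proposition~\ref{prop:Neeman's} is supposed to deliver and it only applies once summand closure is known; and passing to the summand closure of $\underline{\F}$ proves a statement about a possibly different class, leaving the stated claim open. The correct repair is the one the paper uses: $\F=\Filt\clS$ is closed under coproducts and extensions in $\C_{\mathbf{E}'}$ by Corollary~\ref{cor:filtrations closed under transf ext}, hence $\underline{\F}$ is closed under countable coproducts, extensions and suspension in $\stC_\mathbf{E}$, and the B\"okstedt--Neeman homotopy-colimit argument (again~\cite[I.6.8]{Nee2}) then gives closure under direct summands. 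With these two points supplied, the rest of your argument, including the upgrade to a Bousfield localizing pair when $\underline{\Omega_\mathbf{E}^{-1}(\F)}=\underline{\F}$, goes through as written.
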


\begin{proof}
We simply check the assumptions of Proposition \ref{prop:Neeman's} for $\D=\stC_\mathbf{E}$ and $\X=\stF$. Since $\C_{\mathbf{E}'}$ is efficient, the underlying category $\C$ has arbitrary coproducts by Lemma~\ref{lem:existence of exact coproducts}. As a consequence, the triangulated category $\stC_\mathbf{E}$ has arbitrary coproducts, and consequently splitting idempotents by~\cite[I.6.8]{Nee2}. Since $\F$ is closed under coproducts and extensions in $\C_{\mathbf{E}'}$
by Corollary~\ref{cor:filtrations closed under transf ext} and $\mathbf{E} \subseteq \mathbf{E}'$, it follows that $\stF$ is closed under extensions and coproducts in $\stC_\mathbf{E}$. Note that $\stF$ is also closed under summands in $\stC_\mathbf{E}$, again by~\cite[I.6.8]{Nee2}. Now, the conditions $\underline{\Omega_\mathbf{E}^{-1}(\F)} \subseteq \underline{\F}$ (resp.\ $\underline{\Omega_\mathbf{E}^{-1}(\F)} = \stF$) precisely say that $\stF$ is a suspended (resp.\ triangulated) subcategory of $\stC_\mathbf{E}$. Finally, by applying Corollary~\ref{cor:consequences of Theorem} to the exact category $\C_{\mathbf{E}'}$, we see that $\F$ is a precovering class in $\C$, and so is $\stF$  in $\stC_\mathbf{E}$.
\end{proof}


\subsection{Hereditary cotorsion pairs and weight-structures}
\label{subsec:w-struct}

Cotorsion pairs are also closely related to weight structures in the sense of~\cite{B2,B,Pauk,MSSS}. We recall \cite[Definition 1.1.1]{B2}:

\begin{defi} \label{def:weight structure}
A {\bf weight structure} on a triangulated category $\D$ is a pair $(\X,\Y)$ of full subcategories satisfying the following properties:

\begin{enumerate}
\item $(\Y,\X[1])$ is a $\Hom$-orthogonal pair,
\item $\X[1] \subseteq \X$,
\item each object $M \in \D$ appears in a triangle of the form
$$ M \la X \la Y \laplus, $$
with $X \in \X$ and $Y \in \Y$.
\end{enumerate}
\end{defi}

\begin{rem} \label{rem:weight structure definition}
Since the formal definition of a weight structure is (somewhat deceptively) very similar to that of a t-structure, only interchanging the roles of $\X$ and $\Y$, the same notion has later appeared in~\cite{Pauk,MSSS} under the name {\bf co-t-structure}. The definition in papers~\cite{B2,B,Pauk,MSSS} is in fact slightly different, but equivalent by~\cite[Proposition 2.1]{Pauk}. In particular, in any weight structure $(\X,\Y)$ the class $\X$ is a suspended subcategory of $\D$.
\end{rem}

\begin{exem}
Let $\A$ be an additive category and $\D = \Htp\A$, the homotopy category of complexes over $\A$. Denote by $\Hneg\A$ and $\Hpos\A$ the full subcategories of complexes supported, up to homotopy isomorphism, only in non-positive and non-negative degrees, respectively. Then $(\Hneg\A,\Hpos\A)$ is the prototype of a weight structure on $\Htp\A$, see~\cite{B2,B}.
\end{exem}

Recall that we call a cotorsion pair $(\F,\T)$ in a Frobenius exact category hereditary if $\Omega\F \subseteq \F$ or, equivalently $\Omega^{-1} \T \subseteq \T$. The following is now an analogue of Proposition \ref{prop:uniqueness of the triangle}.

\begin{prop} \label{prop:weight structures versus cotorsion pairs}
Let $\C$ be a Frobenius exact category. Then the following are
equivalent for a pair $(\F,\T)$ of full subcategories of $\C$:
\begin{enumerate}
    \item $(\stF,\stT)$ is a complete hereditary cotorsion pair in $\C$;
    \item $(\stT,\stF)$ is a weight structure in $\stC$.
\end{enumerate}
In particular, the assignment $(\F,\T) \rightsquigarrow (\stT,\stF)$ gives a bijective correspondence between complete hereditary cotorsion pairs in $\C$ and weight structures on $\stC$.
\end{prop}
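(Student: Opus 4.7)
The plan is to reduce everything to Proposition~\ref{prop:Hom-orthogonal pairs versus cotorsion pairs}, which already gives a bijection between cotorsion pairs $(\F,\T)$ in $\C$ and Hom-orthogonal pairs $(\underline{\Omega\F},\stT)$ in $\stC$, together with the characterization of completeness in terms of approximation triangles. Since the suspension in $\stC$ is $?[1]=\Omega^{-1}$, we have $\underline{\Omega\F}=\stF[-1]$, so the Hom-orthogonal pair attached to $(\F,\T)$ is, up to a shift, precisely $(\stF[-1],\stT)$. This is what suggests passing between cotorsion pairs and weight structures by the assignment $(\F,\T) \rightsquigarrow (\stT,\stF)$.

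For $(1)\Rightarrow(2)$, assume $(\stF,\stT)$ is a complete hereditary cotorsion pair. First, from the Hom-orthogonality of $(\stF[-1],\stT)$, rotating by $[1]$ gives $\stF^\perp = \stT[1]$ and ${^\perp(\stT[1])}=\stF$; that is, $(\stF,\stT[1])$ is Hom-orthogonal in $\stC$, verifying axiom (1) of Definition~\ref{def:weight structure} for $(\stT,\stF)$. Next, hereditariness is, by the paper's own remark, equivalent to $\Omega^{-1}\T\subseteq\T$, which in $\stC$ reads $\stT[1]\subseteq\stT$, giving axiom (2). For axiom (3), completeness combined with Proposition~\ref{prop:Hom-orthogonal pairs versus cotorsion pairs}(c) produces for each $M\in\stC$ a triangle
$$ X'\la M\la Y'\laplus $$
with $X'\in\underline{\Omega\F}=\stF[-1]$ and $Y'\in\stT$. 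A single rotation delivers $M\la Y'\la X'[1]\laplus$ with $Y'\in\stT$ and $X'[1]\in\stF$, which is exactly the triangle required by Definition~\ref{def:weight structure}(3).

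For $(2)\Rightarrow(1)$, I run the same steps in reverse. The Hom-orthogonality of $(\stF,\stT[1])$ rotates back to $(\stF[-1],\stT)$, which by Proposition~\ref{prop:Hom-orthogonal pairs versus cotorsion pairs} means $(\stF,\stT)$ is a cotorsion pair. The condition $\stT[1]\subseteq\stT$ lifts (using Lemma~\ref{lem:projective closure}) to $\Omega^{-1}\T\subseteq\T$ in $\C$, so the pair is hereditary. Finally, a triangle $M\la X\la Y\laplus$ with $X\in\stT$, $Y\in\stF$ rotates to $Y[-1]\la M\la X\laplus$ with $Y[-1]\in\stF[-1]=\underline{\Omega\F}$ and $X\in\stT$, giving the approximation triangle of Proposition~\ref{prop:Hom-orthogonal pairs versus cotorsion pairs}(c), hence completeness of $(\stF,\stT)$.

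The bijective correspondence then falls out at once: the inverse of $(\F,\T)\rightsquigarrow(\stT,\stF)$ is given by sending a weight structure $(\X,\Y)$ to $(\Y,\X)$ viewed inside $\C$, using that a cotorsion pair in $\C$ is determined by its image in $\stC$ (since $\F=\stF$ and $\T=\stT$ as subcategories of $\C$, both classes containing the projective–injective objects). The main obstacle is really just bookkeeping with the shifts $\Omega$ vs.\ $[-1]$ and with the rotations of approximation triangles; once these translations are made carefully, each axiom of a weight structure is matched precisely against one of the three features of a cotorsion pair (Hom-orthogonality, hereditariness, completeness).
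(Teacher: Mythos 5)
Your proposal is correct and follows essentially the same route as the paper: reduce to Proposition~\ref{prop:Hom-orthogonal pairs versus cotorsion pairs} and match Hom-orthogonality, hereditariness ($\underline{\Omega^{-1}\T}\subseteq\stT$, i.e.\ $\stT[1]\subseteq\stT$) and completeness against the three axioms of Definition~\ref{def:weight structure}. The only cosmetic difference is that you work with the unshifted pair $(\underline{\Omega\F},\stT)$ and rotate triangles, whereas the paper passes directly to the shifted pair $(\stF,\underline{\Omega^{-1}\T})$; your write-up just makes explicit the shift bookkeeping that the paper dismisses as ``easy to check.''
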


\begin{proof}
By Proposition \ref{prop:Hom-orthogonal pairs versus cotorsion pairs}, the assignment $(\F,\T) \rightsquigarrow
(\stF,\underline{\Omega^{-1}\T})$ gives a bijection between cotorsion pairs in $\C$ and Hom-orthogonal pairs in $\stC$, and
$(\F,\T)$ is complete \iff each $M \in \stC$ admits a triangle $M \la X \la Y \laplus$ with $X \in \stT$ and $Y \in \stF$. In such a case, it is easy to check using the equality $\stT[1] = \underline{\Omega^{-1}\T}$, that $(\stT,\stF)$ is a weight structure \iff $(\F,\T)$ is complete and hereditary.
\end{proof}

\section{Applications}
\label{sec:applications}

Now we shortly describe how to apply the relations between cotorsion pairs, exact categories and triangulated categories. This is related to recent work by several authors. For all applications, there is a common important concept which we will need:

\begin{defi} \label{def:deconstructible}
Let $\C$ be an exact category. We say that a full subcategory $\F$ is {\bf deconstructible} if there is a set $\clS$ of objects of $\C$ \st
$\F = \Filt\clS$ in $\C$ in the sense of Definition~\ref{def:filtrations}.
\end{defi}

That is, a class is deconstructible if we can get all its objects from a set by forming transfinite extensions. For deconstructible classes in Grothendieck categories, we refer to~\cite{St2} for more information. Now we can proceed to the applications.

\subsection{Structure of certain algebraic triangulated categories}
\label{subsec:well generated categories}

In this part, we give a handy tool to prove that certain algebraic triangulated categories are well generated. As easy consequences, we recover some results by J\o{}rgensen~\cite{Jo} and Neeman~\cite[\S5]{Nee3}. Let us give the main statement, which we explain, prove and apply in the following paragraphs.

\begin{theor} \label{thm:accessible frobenius exact categories}
Let $\C$ be an accessible Frobenius exact category with arbitrary coproducts. Then
\begin{enumerate}
\item $\stC$ is a locally well generated triangulated category.
\end{enumerate}
If, in addition, transfinite compositions of inflations exist in $\C$, then
\begin{enumerate}
\item[(2)] $\C$ is efficient (i.e. satisfies the axioms of Definition~\ref{def:efficient exact category}).
\item[(3)] For any set $\clS$ of objects of $\C$ \st $\stS = \stS[1]$, the equality $\Loc\stS = \underline{\Filt\clS}$ holds. That is, the smallest localizing subcategory of $\stC$ containing $\clS$ consists, up to homotopy isomorphism, precisely of the
$\clS$-filtered objects in $\C$.

\item[(4)] A localizing subcategory $\clL \subseteq \stC$ is well generated \iff $\clL = \stF$ for some deconstructible subcategory $\F \subseteq \C$. In that case, $(\clL ,\clL^\perp)$ is a Bousfield localizing pair in $\stC$.
\end{enumerate}
\end{theor}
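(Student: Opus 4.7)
The plan is to prove parts (2), (3), and (4) in sequence, as they form the technical heart, and then derive (1) by a reduction to a setting where (Ax1) holds. For (2), accessibility of $\C$ forces every object to be $\kappa$-presentable for some regular $\kappa$ and thus $\kappa$-small relative to every class of morphisms, giving (Ax2); (Ax1) is the extra hypothesis, and (Ax3) is immediate from Proposition~\ref{prop:examples of strongly homological sets of maps}(1), since the Frobenius structure supplies enough projectives.

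The crux is (3). I would first verify that $\underline{\Filt\clS}$ is a localizing subcategory of $\stC$ containing $\stS$: it is closed under coproducts and extensions because $\Filt\clS$ is so in $\C$ (Corollary~\ref{cor:filtrations closed under transf ext}), closed under summands because $\stC$ has split idempotents, and the delicate point is closure under $[1] = \Omega^{-1}$. From $\stS = \stS[1]$ one also has $\stS = \stS[-1]$, and a horseshoe-type lifting along an $\clS$-filtration of $X \in \Filt\clS$, with glueing at limit stages provided by (Ax1), shows that $\Omega X$ lies in $\underline{\Filt\clS}$. This yields $\Loc\stS \subseteq \underline{\Filt\clS}$. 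For the reverse inclusion, any $\clS$-filtered $X = \colim_{\alpha<\lambda} X_\alpha$ fits, by (Ax1), into a standard mapping-telescope conflation $0 \to \coprod X_\alpha \to \coprod X_\alpha \to X \to 0$ in $\C$ whose image in $\stC$ realises $X$ as a homotopy colimit; a transfinite induction along the filtration then places $X$ in $\Loc\stS$.

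For (4) the direction $(\Leftarrow)$ goes as follows: given $\F = \Filt\clS$ deconstructible, enlarge $\clS$ to its orbit under $\Omega^{\pm 1}$ so that $\stS = \stS[1]$; by the closure property established in (3) this does not alter $\underline{\Filt\clS}$. Then (3) identifies $\stF = \Loc\stS$, a localizing subcategory. Well-generatedness follows because the enlarged $\clS$ can still be taken of bounded presentability degree inherited from the accessibility of $\C$. The Bousfield localizing pair structure $(\stF,\stF^\perp)$ then comes from Proposition~\ref{prop:Neeman's}: $\stF$ is precovering by Theorem~\ref{thm:orthogonality of morphisms - cotorsion pairs}(1) applied to the now efficient category $\C$, and it is triangulated, so Neeman's argument produces a right adjoint to $\stF \hookrightarrow \stC$, and the induced t-structure collapses to a Bousfield localizing pair because $\stF = \stF[1]$. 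For $(\Rightarrow)$, a set of $\alpha$-compact generators of a well-generated $\clL$, closed up under shifts, supplies the required $\clS$ with $\stS = \stS[1]$, and (3) gives $\clL = \underline{\Filt\clS}$.

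Finally, for (1), since (Ax1) is not assumed, I would argue by reduction: any set of objects of $\C$ lies in a $\kappa$-accessible full subcategory for some regular $\kappa$, and one can pass to a cocompletion that formally adjoins the missing transfinite composites of inflations while preserving the Frobenius structure and the smallness of objects; applying (4) in that enlargement shows that the localizing subcategory generated by any prescribed set of objects of $\stC$ is well generated, which is precisely the content of local well-generation. The principal obstacle is the suspension-closure step in (3), where the horseshoe lifting must be carried coherently across limit ordinals of arbitrary cofinality, together with the reduction argument in (1), which must track smallness data through the passage to the cocompletion.
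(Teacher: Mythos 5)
Your outline for part (2) matches the paper's, but there are two genuine gaps elsewhere.

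First, part (3), reverse inclusion. Your ``standard mapping-telescope conflation'' $0 \to \coprod_{\alpha<\lambda} X_\alpha \to \coprod_{\alpha<\lambda} X_\alpha \to X \to 0$ does not exist for a general $\lambda$-sequence: the successor-shift relations $x_\alpha \sim i_{\alpha,\alpha+1}(x_\alpha)$ do not generate the identifications needed at limit ordinals, so the cokernel of $1-\mathrm{shift}$ is not the colimit once $\lambda>\omega$ (already for $\lambda=\omega\cdot 2$ with all $X_\alpha=\mathbb{Z}$ and identity maps the cokernel is $\mathbb{Z}\oplus\mathbb{Z}$, not $\mathbb{Z}$). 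Correspondingly, the transfinite induction breaks at limit ordinals of uncountable cofinality, where no sequential homotopy colimit is available. This is precisely the point the paper singles out in Remark~\ref{rem:Loc S compared to Filt S}: the inclusion $\underline{\Filt\clS} \subseteq \Loc\stS$ is the intriguing one, for which no direct proof is known. The paper obtains it indirectly: by Corollary~\ref{cor:Distinguished triangle of a Hom-orthogonal pair} and Proposition~\ref{prop:uniqueness of the triangle}, $\underline{\Filt\clS}$ is the left class of the Hom-orthogonal pair in $\stC$ generated by $\stS$, i.e.\ $\underline{\Filt\clS} = {}^\perp(\stS^\perp)$; on the other hand, part (1) plus the Brown representability machinery for well generated categories shows $(\Loc\stS, \stS^\perp)$ is also a Hom-orthogonal pair, so $\Loc\stS = {}^\perp(\stS^\perp) = \underline{\Filt\clS}$.

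Second, part (1). Your reduction via a ``cocompletion that formally adjoins the missing transfinite composites of inflations while preserving the Frobenius structure'' is not an argument: no such construction is exhibited, and there is no reason it should preserve the exact structure, the coincidence of projectives and injectives, or the presentability ranks. Moreover your logical order is inverted: you derive (1) from (4), but (4) rests on (3), and (3) — as just explained — genuinely needs (1). The paper proves (1) first and unconditionally: write $\C = \bigcup_{\kappa\in\K}\C_\kappa$ with $\C_\kappa$ the $\kappa$-presentable objects, verify directly that $\Loc\underline{\C_\kappa}$ is $\kappa$-well generated by a representative set of $\C_\kappa$ (conditions (2) and (3) of Definition~\ref{def:well generated} follow from $\kappa$-presentability and the expression of a coproduct as a $\kappa$-direct limit of subcoproducts), and then invoke Krause's theorem that a localizing subcategory generated by a set inside a well generated one is again well generated. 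You should restructure the proof accordingly.
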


Let us start with explaining the terminology. First we recall a few concepts, for whose basic properties we refer to~\cite{MaPa} or~\cite[Chapter 2]{AR}:

\begin{defi} \label{def:accessible categories}
Let $\D$ be a category and $\kappa$ be a regular cardinal. A direct limit in $\D$ is called {\bf $\kappa$-direct} if the indexing set $I$ of the direct system is $\kappa$-directed, that is, each subset of $I$ of cardinality less than $\kappa$ has an upper bound in $I$. An object $M \in \D$ is called {\bf $\kappa$-presentable} if the functor $\Hom_\D(M,-)$ preserves all $\kappa$-direct limits which exist in $\D$. The category $\D$ is called {\bf $\kappa$-accessible} if
\begin{enumerate}
 \item $\D$ has all $\kappa$-direct limits, and
 \item there is a set $\D_0$ of $\kappa$-presentable objects of $\D$ \st each $M \in \D$ is a $\kappa$-direct limit of objects from $\D_0$.
\end{enumerate}
Finally, $\D$ is called {\bf accessible} if it is $\kappa$-accessible for some regular cardinal $\kappa$.
\end{defi}

The general shape of accessible categories is presented in~\cite[Theorem 2.26]{AR}. For our purposes, the following observations is useful:

\begin{lemma} \label{lem:Grothendieck categories are accessible}
Any module category $\Mod{R}$ and, more generally, any Grothendieck category is accessible.
\end{lemma}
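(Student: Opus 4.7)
The plan is to prove the module category case directly and then reduce the general Grothendieck case to it via the Gabriel--Popescu theorem.

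For $\Mod R$, I would show it is $\aleph_0$-accessible, in fact locally finitely presentable. This amounts to three observations: (i) $\Mod R$ is cocomplete, so in particular all $\aleph_0$-directed colimits exist; (ii) the finitely presented $R$-modules---cokernels of maps between finitely generated free modules---form a representative set of $\aleph_0$-presentable objects, since $\Hom_R(M,-)$ commutes with filtered colimits whenever $M$ admits a finite presentation; (iii) every $R$-module is a filtered colimit of finitely presented modules, as one sees by writing it as the colimit of the canonical filtered diagram of morphisms into it from finitely presented modules.

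For a general Grothendieck category $\G$ with generator $U$, I would invoke the Gabriel--Popescu theorem: setting $R = \End_\G(U)^{op}$, the functor $i = \Hom_\G(U,-): \G \la \Mod R$ is fully faithful and admits an exact left adjoint $a$. Thus $\G$ is equivalent to a reflective subcategory of $\Mod R$, and a general principle from accessible category theory (see \cite[Chapter 2]{AR}) states that a reflective subcategory of a locally presentable category whose inclusion preserves $\kappa$-directed colimits for some regular cardinal $\kappa$ is itself locally $\kappa$-presentable, and in particular accessible.

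The main obstacle is to produce such a $\kappa$ for which the inclusion $i$ preserves $\kappa$-directed colimits. Since the reflector $a$ is exact and preserves all colimits, this reduces to showing that for large enough $\kappa$ the unit $\eta : \mathrm{id}_{\Mod R} \la ia$ is an isomorphism on $\kappa$-directed colimits of objects already coming from $\G$. Equivalently, the Gabriel topology on $R$ corresponding to the localization admits a cofinal set of $\kappa$-generated right ideals, which is automatic once $\kappa$ exceeds the cardinality of $R$. Alternatively, one can argue intrinsically in $\G$: pick $\kappa$ large enough that $U$ is $\kappa$-presentable and every subobject of $U^{(I)}$ with $|I| < \kappa$ is $\kappa$-generated; the $\kappa$-generated quotients of coproducts of fewer than $\kappa$ copies of $U$ then form a representative set of $\kappa$-presentable objects through which every object of $\G$ can be written as a $\kappa$-directed colimit, establishing local $\kappa$-presentability directly.
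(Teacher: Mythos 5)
Your proposal is correct, and it is in effect a fully worked-out version of what the paper merely asserts: the paper's entire proof reads ``This is well known'' together with the precise claim that a Grothendieck category $\C$ is $\kappa$-accessible for any infinite regular $\kappa$ with $\kappa > \lvert \End_\C(G)\rvert$ for a generator $G$. Your Gabriel--Popescu route, with $R = \End_\G(U)^{\mathrm{op}}$ and the observation that every right ideal of $R$ is generated by at most $\lvert R\rvert$ elements, recovers exactly that bound, so the two arguments are the same in substance; yours simply supplies the justification the paper omits. The only place I would press you is the intrinsic alternative in your last sentence: there ``$\kappa$-generated'' should really be ``$\kappa$-presentable'', and the existence of a $\kappa$ for which the generator $U$ is $\kappa$-presentable is itself the nontrivial point (it is usually deduced from the Gabriel--Popescu embedding rather than proved directly), so as stated that variant risks a mild circularity. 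The main route via the reflection theorem for locally presentable categories (\cite[Chapter 2]{AR}) is sound as written.
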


\begin{proof}
This is well known. If $\C$ is a Grothendieck category, it is $\kappa$-accessible for any infinite regular cardinal $\kappa$ \st $\kappa > \lvert \End_\C(G) \rvert$ for some generator $G$.
\end{proof}

In applications we wish to work with complexes of modules or sheaves, so it is crucial to know that accessibility is preserved when passing to categories of complexes.

\begin{prop} \label{prop: accessibility passes to complexes}
Let $\A$ be an accessible additive category. Then the category $\Cpx\A$ of all chain complexes over $\A$ is accessible.
\end{prop}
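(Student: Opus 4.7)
The plan is to realise $\Cpx\A$ as a functor category over a small indexing category and invoke the closure of accessible categories under small diagram shapes.

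Let $\J$ be the small category with object set $\bbZ$, generated by arrows $d^n\colon n\la n+1$ for $n\in\bbZ$, subject to the relations $d^{n+1}d^n=0$. A functor $X\colon\J\la\A$ is precisely the same data as a chain complex in $\A$, and morphisms of functors correspond to chain maps, yielding a canonical isomorphism of categories $\Cpx\A\cong[\J,\A]$. Granting this, one applies the standard fact that for any accessible category $\mathcal{K}$ and any small category $\J$, the functor category $[\J,\mathcal{K}]$ is again accessible (see e.g.\ \cite[Theorem 2.39]{AR} and the surrounding discussion), and the proposition follows immediately.

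For readers preferring a more hands-on argument, one can verify the clauses of Definition~\ref{def:accessible categories} directly. Assume $\A$ is $\kappa$-accessible and let $\A_\kappa$ be a representative set of $\kappa$-presentable objects. First, $\Cpx\A$ admits $\kappa$-directed colimits formed degreewise: given a $\kappa$-directed system $(X_i^\bullet)_{i\in I}$, set $X^n=\colim_i X_i^n$, and note that the identities $d_i^{n+1}d_i^n=0$ pass to the colimit, so $X^\bullet$ is again a complex and is the desired colimit. Second, if $X^\bullet$ is a bounded complex with components in $\A_\kappa$, then $\Hom_{\Cpx\A}(X^\bullet,-)$ is expressible as a finite limit (an equaliser over the finite support of $X^\bullet$) of the degreewise hom-functors $\Hom_\A(X^n,(-)^n)$, each of which preserves $\kappa$-directed colimits; since finite limits commute with $\kappa$-directed colimits of sets, such complexes are $\kappa$-presentable in $\Cpx\A$.

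The main obstacle is the third clause: every complex $Y^\bullet\in\Cpx\A$ must be exhibited as a $\mu$-directed colimit of bounded complexes with $\mu$-presentable components, for some sufficiently large regular cardinal $\mu\ge\kappa$. Here one must coherently assemble approximations $A\la Y^n$ with $A\in\A_\kappa$ (or, after passing to a larger cardinal, $A\in\A_\mu$), together with matching approximations to the differentials and to the identities $d^{n+1}d^n=0$, into a single $\mu$-directed diagram whose colimit reconstructs $Y^\bullet$. This coherent organisation is the technical heart of the statement and is precisely the content of the accessible-diagram machinery of \cite[Chapter 2]{AR}; to avoid replaying it, the cleanest route is to appeal to the functor category theorem cited in the previous paragraph.
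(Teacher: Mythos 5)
Your proposal takes essentially the same route as the paper: the paper also realizes $\Cpx\A$ as a functor category over a small indexing category with object set $\bbZ$ and appeals to (a version of) \cite[Theorem 2.39]{AR}. The one point to tighten is that the relation $d^{n+1}d^n=0$ cannot be imposed in a plain small category $\J$, since an ordinary category has no zero morphisms and an unconstrained arrow $n\la n+2$ would impose no condition on a functor; the paper therefore takes the indexing category to be \emph{preadditive}, with $\Hom_\U(i,i+2)=0$ the zero group, and identifies $\Cpx\A$ with the category of \emph{additive} functors $\U\la\A$, invoking the additive analogue of \cite[Theorem 2.39]{AR} rather than the plain functor-category statement. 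With that adjustment your argument is exactly the paper's.
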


\begin{proof}
Using the obvious version of~\cite[Theorem 2.39]{AR} for additive categories and functors, one sees that for any small preadditive category $\U$, the category $\A^\U$ of all additive functors $F: \U \la \A$ is accessible. What remains to show is that $\Cpx\A$ is equivalent to $\A^\U$ for suitable $\U$. We define such $\U$ explicitly and leave the details about the equivalence $\A^\U \la \Cpx\A$ for the reader. The objects of $\U$ are integers and
$$
\Hom_\U(i,j) =
\left\{ \begin{array}{ll}
\mathbb{Z} & \textrm{for $j=i$ or $j=i+1$,} \\
0 & \textrm{otherwise.} \\
\end{array} \right.
$$
In particular,  if $\partial^i: i \la i+1$ is the generator of the free group $\Hom_\U(i,i+1)$, one has $\partial^{i+1}\circ\partial^i=0$ for all $i\in\mathbb{Z}$. Clearly, an object $X = (X^i,d^i)$ of $\Cpx\A$ is identified with the functor $X: \U \la \A$ taking $i\mapsto X^i$ and $\partial^i \mapsto d^i$.
\end{proof}

Let us turn our attention to well generated categories. The original definition was introduced by Neeman~\cite{Nee2} for triangulated categories, but as pointed out by Ji\v{r}\'\i{} Rosick\'y, it makes a good sense for any additive category with coproducts. The definition here differs slightly from Neeman's original definition, but it is equivalent for any triangulated category by~\cite[Theorem A]{Kr2} and~\cite[Lemmas 4 and 5]{Kr2}.

\begin{defi} \label{def:well generated}
Let $\D$ be an additive category with arbitrary coproducts and $\kappa$ be a regular cardinal. Then $\D$ is called {\bf $\kappa$-well generated} by a set of objects $\D_0 \subseteq \D$ provided that $\D_0$ satisfies the following conditions:
\begin{enumerate}
\item If $M \in \D$ is non-zero, there is a non-zero morphism $X \la M$ for some $X \in \D_0$;
\item each object $X \in \D_0$ is $\kappa$-small \wrt the class of all split monomorphisms in the sense of Section \ref{subsec:model categories};
\item For any morphism in $\D$ of the form $f: X \la \coprod_{i \in I} M_i$ with $X \in \D_0$, there exists a family of morphisms $f_i: X_i \la M_i$ \st $X_i \in \D_0$ for each $i \in I$ and $f$ factorizes as
$$
X \xrightarrow{\phantom{\coprod f_i}}
\coprod_{i \in I} X_i \xrightarrow{\coprod f_i}
\coprod_{i \in I} M_i.
$$
\end{enumerate}
The category $\D$ is called {\bf well generated} if it is
$\kappa$-well generated by a set of objects,  for some regular
cardinal $\kappa$.
\end{defi}

For algebraic triangulated categories, there is a weaker notion introduced in~\cite{St}, which is satisfied by many triangulated categories which are not well generated. Let us recall this together with some related concepts from~\cite{Nee2}.

\begin{defi} \label{def:locally well generated}
Let $\D$ be a triangulated category with arbitrary coproducts. A full triangulated subcategory $\clL \subseteq \D$ is called a {\bf localizing subcategory} if it is closed under forming coproducts in $\D$. If $\clS$ is a class of objects of $\D$, we denote the smallest localizing subcategory of $\D$ containing $\clS$ by $\Loc\clS$.

The triangulated category $\D$ is called {\bf locally well generated} if $\Loc\clS$ is a well generated category for any set $\clS$ of objects of $\D$.
\end{defi}

Now we have all necessary terminology and can prove the theorem.

\begin{proof}[Proof of Theorem~\ref{thm:accessible frobenius exact categories}]
(1) We learn from~\cite[Corollary 2.14]{AR} that there are arbitrary large regular cardinals $\kappa$ \st $\C$ is $\kappa$-accessible. Let us denote the class of all cardinals with this property by $\K$, and
let $\C_\kappa$ be the full subcategory of $\C$ formed by all $\kappa$-presentable objects. It follows from Definition~\ref{def:accessible categories} and~\cite[Proposition 1.16]{AR} that $\C = \bigcup_{\kappa \in \K} \C_\kappa$. Hence we
have
$$ \stC = \bigcup_{\kappa \in \K} \underline{\C_\kappa} = \bigcup_{\kappa \in \K} \clL_\kappa, $$
where we put $\clL_\kappa = \Loc \underline{\C_\kappa}$ in $\stC$.

We claim that $\clL_\kappa$ is $\kappa$-well generated for each $\kappa \in \K$. To this end, we can take a representative set $\D_0$ of $\C_\kappa$, which is possible by~\cite[Remark 2.15]{AR}. Then we prove that $\clL_\kappa$ is $\kappa$-well generated by $\D_0$ in the sense of Definition~\ref{def:well generated}. Indeed, each $X \in \D_0$ satisfies Definition~\ref{def:well generated}(2) because it is $\kappa$-presentable in $\C$. To prove condition (3) of Definition~\ref{def:well generated}, recall the well-known fact that if we have an arbitrary family $(M_i)_{i \in I}$ of objects of $\C$ and express each $M_i$ in $\C$ as a $\kappa$-direct limit $M_i = \varinjlim_{j \in J_i} X_{i,j}$ of $\kappa$-presentable objects, then we have the $\kappa$-direct limit
$$ \coprod_{i \in I} M_i = \varinjlim_{(j_i) \in \prod J_i} \big( \coprod_{i \in I} X_{i,j_i} \big). $$
Since each $X \in \D_0$ is $\kappa$-presentable, any morphism $X \la \coprod_{i \in I} M_i$ factors through $\coprod_{i \in I} X_{i,j_i}$ for some $(j_i) \in \prod_{i \in I} J_i$, as desired. Finally, condition (1) of Definition~\ref{def:well generated} easily follows from the fact that $\clL_\kappa = \Loc \underline{\C_\kappa} = \Loc \D_0$, see for example~\cite[Lemma 3.3]{St}. This proves the claim.

To finish the proof that $\stC$ is locally well generated, we proceed as in~\cite[Theorem 3.5]{St}.
Namely, if $\clS$ is a set of objects of $\stC$, then $\clS \subseteq \underline{\C_\kappa}$ for some $\kappa \in \K$.
As shown in~\cite[Corollary 7.2.2]{Kr3}, $\Loc\clS$ is then a $\kappa$-well generated triangulated subcategory of $\clL_\kappa$. Hence $\stC$ is locally well generated.

(2) Since $\C$ is accessible, it has splitting idempotents by~\cite[Observation 2.4]{AR}. In particular, all sections in $\C$ have cokernels. To see that $\C$ is efficient, we essentially repeat the arguments in Example~\ref{expl:efficient exact categories}(3). (Ax1) of Definition~\ref{def:efficient exact category} is satisfied by Proposition~\ref{prop:exact category with enough injectives}(2). (Ax2) is true since each object of $\C$ is $\kappa$-presentable for some infinite regular cardinal $\kappa$. Finally, (Ax3) follows from Proposition~\ref{prop:examples of strongly homological sets of maps}(1), using the fact that $\C$ has enough projectives.

(3) Consider the cotorsion pair $(\X,\Y)$ in $\C$ generated by $\clS$. Using the assumption that $\stS = \underline{\Omega \clS} = \underline{\Omega^{-1} \clS}$, a standard dimension shifting argument shows that $\stY = \underline{\Omega \Y}$ and $\stX = \underline{\Omega \X}$. That is, the cotorsion pair is hereditary and cohereditary. Using Proposition~\ref{prop:Hom-orthogonal pairs versus cotorsion pairs}, we see that $(\underline{\Omega \X},\stY) = (\stX,\stY)$ is the Hom-orthogonal pair in $\stC$ generated by $\clS$ (in the sense of Definition~\ref{def:Hom-orthogonal pairs}).

Now on one hand, Corollary~\ref{cor:Distinguished triangle of a Hom-orthogonal pair} together with Proposition~\ref{prop:uniqueness of the triangle} tell us that $\X = \stX = \underline{\Filt\clS}$. On the other hand, since $\stC$ is locally well generated by (1), it follows from the general theory for well generated triangulated categories that $(\Loc\stS, \stS^\perp)$ is a Hom-orthogonal pair; see for instance~\cite[Proposition 3.6]{St} and~\cite[Lemma 3.3]{BIK}. Since $\stX$ is a localizing subcategory of $\stC$ and $\stS \subseteq \stX$, we have $\Loc\stS \subseteq \stX$. However, $(\stX,\stY)$ is generated by $\stS$ as a $\Hom$-orthogonal pair, so we must also have $\stX = \Loc\stS$. Hence $\Loc\stS = \underline{\Filt\clS}$.

(4) This follows easily from (1) and (3) and the fact that Brown representability theorem holds for well generated triangulated categories.
\end{proof}

\begin{rem} \label{rem:Loc S compared to Filt S}
It is very easy to see that $\Loc\stS \subseteq \underline{\Filt\clS}$ for a set $\clS$ of objects of any efficient Frobenius exact category. The intriguing part is the inclusion $\underline{\Filt\clS} \subseteq \Loc\stS$, for which it would be interesting to have a direct proof.
\end{rem}

Now we give a consequence of Theorem~\ref{thm:accessible frobenius exact categories} which is easier to apply directly and immediately implies a result of J\o{}rgensen and Neeman (see Example~\ref{expl:K(Proj)} below). If $\A$ is a category with arbitrary coproducts, $\clS$ a set of objects and $\kappa$ an infinite regular cardinal, we denote by $\Sum_\kappa(\clS)$ the class of all coproducts of families of fewer than $\kappa$ objects from $\clS$. An instance to keep in mind is $\A = \Mod{R}$, $\kappa=\aleph_0$ and $\clS=\{R\}$, in which case $\Sum_\kappa(\clS)$ is the class of all free $R$-modules of finite rank.

\begin{prop} \label{prop:K(B) well generated}
Let $\A$ be an accessible category with coproducts, $\B$ be a class of objects of $\A$ closed under taking coproducts, $\kappa$ be an infinite regular cardinal, and $\clS\subseteq\B$ be a set of $\kappa$-presentable objects. Put $\U = \Cplus{\Sum_\kappa(\clS)}$, the category of all bounded below complexes over $\Sum_\kappa(\clS)$. Then the following assertions are equivalent:
\begin{enumerate}
\item $\B \subseteq \Add(\clS)$,

\item Each complex $X \in \Cpx\B$ is homotopy equivalent to a complex $Y \in \Cpx\A$ which is $\U$-filtered \wrt the componentwise split exact structure.

\item $\Htp\B = \Loc\underline{\U}$.
\end{enumerate}
If the assertions hold, $\Htp\B$ is $\max(\kappa,\aleph_1)$-well generated
by $\U$, and $(\Htp\B, \U^\perp)$ is a Bousfield localizing pair in
$\Htp\A$.
\end{prop}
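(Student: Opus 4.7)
The plan is to apply Theorem~\ref{thm:accessible frobenius exact categories} to $\C = \Cpx\A$ equipped with its componentwise split exact structure (Example~\ref{expl:semisplit structure}). By Proposition~\ref{prop: accessibility passes to complexes} and Example~\ref{expl:efficient exact categories}(3), $\C$ is an accessible, hence efficient, Frobenius exact category and $\stC = \Htp\A$. Since any shift of a bounded below complex over $\Sum_\kappa(\clS)$ is again such a complex, a representative set of $\U$ (viewed in $\Htp\A$) is stable under shifts, so Theorem~\ref{thm:accessible frobenius exact categories}(3) yields the key identity $\Loc \underline{\U} = \underline{\Filt\,\U}$ in $\Htp\A$. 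I will also freely use that localizing subcategories of a triangulated category with coproducts are automatically summand-closed (\cite[I.6.8]{Nee2}).

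I will establish the cycle $(1)\Rightarrow(3)\Rightarrow(2)\Rightarrow(1)$. For $(1)\Rightarrow(3)$, let $X \in \Cpx\B \subseteq \Cpx{\Add\clS}$. For each $n$ fix $T^n \in \A$ with $X^n \oplus T^n \in \Sum\clS$, and put $X' = X \oplus \bigoplus_n D_n$, where $D_n$ is the stalk complex with $T^n$ in degree $n$ and zero differentials; then $X$ is a componentwise split subcomplex of $X'$ and every $X'^n \in \Sum\clS$. Writing $X'^n = \coprod_{i \in I_n} S_{n,i}$ with $S_{n,i} \in \clS$, I well-order $\bigsqcup_n I_n$ and construct a chain $0 = Y_0 \subseteq Y_1 \subseteq \cdots \subseteq X'$ of componentwise split subcomplexes by adjoining at stage $\alpha+1$ the smallest differential-closed subcomplex of $X'$ containing the next generator $S_{n,i}$. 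By $\kappa$-presentability of $S_{n,i}$ and regularity of $\kappa$, this closure is a bounded below subcomplex with fewer than $\kappa$ summands in each degree, so each quotient $Y_{\alpha+1}/Y_\alpha$ lies in $\U$. Hence $X'$ is $\U$-filtered and $\underline{X'} \in \Loc\underline{\U}$; summand closure then gives $\underline{X} \in \Loc\underline{\U}$, i.e.\ $\Htp\B \subseteq \Loc\underline{\U}$. The reverse inclusion is immediate since $\U \subseteq \Cpx\B$ (from $\clS \subseteq \B$ and $\B$ closed under coproducts) and $\Htp\B$ is localizing in $\Htp\A$.

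Next, $(3)\Rightarrow(2)$ is immediate: for any $X \in \Cpx\B$, $\underline{X} \in \Htp\B = \underline{\Filt\,\U}$, so $X$ is homotopy equivalent to some $Y \in \Filt\,\U$. For $(2)\Rightarrow(1)$, given $B \in \B$ viewed as a stalk complex in degree $0$, (2) yields a $\U$-filtered $Y$ with $B \simeq Y$. Since $\U$-filtrations consist of componentwise split inflations with cokernels in $\U$, the component $Y^0$ is a (transfinite) coproduct of objects of $\Sum_\kappa(\clS)$, hence $Y^0 \in \Sum\clS$. Because $B$ is concentrated in degree $0$, the chain-homotopy identity between a pair of inverse chain maps $f : B \la Y$ and $g : Y \la B$ degenerates in degree $0$ to $g^0 \circ f^0 = 1_B$, exhibiting $B$ as a direct summand of $Y^0$ in $\A$; hence $B \in \Add\clS$.

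Finally, assuming the equivalent conditions hold, each $U \in \U$ is $\mu$-presentable in $\Cpx\A$ for $\mu = \max(\kappa,\aleph_1)$, because every component $U^n \in \Sum_\kappa(\clS)$ is $\kappa$-presentable in $\A$ as a $\kappa$-small coproduct of $\kappa$-presentable objects, and $U$ has only countably many nonzero components (so compatibilities with differentials are absorbed by cofinality $\geq \aleph_1$). Arguing as in the proof of Theorem~\ref{thm:accessible frobenius exact categories}(1), this shows $\underline{\U}$ witnesses the $\mu$-well-generation of $\Loc\underline{\U} = \Htp\B$. Brown representability then supplies a right adjoint to $\Htp\B \infl \Htp\A$, giving the Bousfield localizing pair $(\Htp\B,\Htp\B^\perp)$. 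The equality $\Htp\B^\perp = \U^\perp$ holds since for each $X \in \U^\perp$ the class $\{Y \in \Htp\A \mid \Hom_{\Htp\A}(Y,X) = 0\}$ is a localizing subcategory of $\Htp\A$ containing $\underline{\U}$, hence containing $\Loc\underline{\U} = \Htp\B$; the reverse inclusion $\Htp\B^\perp \subseteq \U^\perp$ is trivial. The principal technical obstacle in the plan is the $\U$-filtration construction in $(1)\Rightarrow(3)$, where one must verify carefully that iterating the differential on a single $\kappa$-presentable summand yields a bounded below subcomplex with strictly fewer than $\kappa$ summands in each degree.
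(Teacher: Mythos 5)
Your proposal is correct and follows essentially the same route as the paper: the equality $\Loc\underline{\U}=\underline{\Filt\U}$ from Theorem~\ref{thm:accessible frobenius exact categories}(3), the transfinite ``differential-closure'' construction of the $\U$-filtration (which is exactly the paper's argument for $(1)\Rightarrow(2)$, after the same reduction to components in $\Sum(\clS)$ via summand-closure), and the componentwise presentability count for $\max(\kappa,\aleph_1)$-well-generation. The only real variation is that you close the cycle with a direct proof of $(2)\Rightarrow(1)$ using the degeneration of the homotopy identity in degree $0$, where the paper instead cites the proof of \cite[Theorem 2.5]{St} for $(3)\Rightarrow(1)$; your argument is a clean self-contained substitute.
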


\begin{proof}
$(2)\Longleftrightarrow (3)$ The category $\Cpx\A$ is accessible by Proposition~\ref{prop: accessibility passes to complexes}. If we consider it with the componentwise split exact structure, it is an efficient Frobenius exact category; see Theorem~\ref{thm:accessible frobenius exact categories}(2) used for $\C = \Cpx\A$. Now, the equivalence is a consequence of Theorem~\ref{thm:accessible frobenius exact categories}(3).

$(1)\Longrightarrow (2)$ Note that if $\Sum(\clS)$ stands for the class of of all coproducts of objects from $\clS$, the inclusions
$\Htp{\Sum(\clS)} \infl \Htp\B \infl \Htp{\Add(\clS)}$ are triangle equivalences by~\cite[I.6.8]{Nee2}. We may, therefore, assume without loss of generality that
$\B=\Sum(\clS)$. Let $X \in \Htp\B$ be a complex
$$
X: \qquad
\cdots \la X^{-1} \overset{\partial^{-1}}\la
X^0 \overset{\partial^0}\la X^1 \la \cdots
\la X^n \overset{\partial^n}\la \cdots,
$$
where $X^n=\coprod_{i\in I_n} S_i^n$ for some index sets $I_n$ and objects $S_i^n \in \clS$. We will inductively construct a $\U$-filtration
$(X_\alpha)_{\alpha<\lambda}$ of $X$ with $X_\alpha$ of the form
$$
X_\alpha: \qquad
\cdots \la \coprod_{i\in I_{-1,\alpha}} S_i^{-1} \la
\coprod_{i\in I_{0,\alpha}} S_i^0 \la
\coprod_{i\in I_{1,\alpha}} S_i^1 \la \cdots,
$$
for suitably chosen subsets $I_{n,\alpha}\subseteq I_n$ ($n\in\mathbb{Z}$) and \st the morphisms $X_\alpha \la X_\beta$ are the canonical split inclusions in all components for each $\alpha<\beta$.

By definition, we must take $I_{n,0} = \emptyset$ for each $n$ and $I_{n,\gamma} = \bigcup_{\alpha<\gamma} I_{n,\alpha}$ for each limit ordinal $\gamma$.
For ordinal successors, assume we have constructed $X_\alpha \subsetneqq X$ for some $\alpha$. Then we choose  $n \in \mathbb{Z}$ \st $I_{n,\alpha}\subsetneqq I_n$ and take $x \in I_n \setminus I_{n,\alpha}$. Having this, we put $I_{m,\alpha+1} = I_{m,\alpha}$ for all $m<n$ and $I_{n,\alpha+1} = I_{n,\alpha} \cup \{x\}$. For $m>n$, we construct $I_{m,\alpha+1}$ by induction on $m$ so that $\partial^{m-1}(\coprod_{i\in I_{m-1,\alpha+1}} S_i^{m-1}) \subseteq \coprod_{i\in I_{m,\alpha+1}} S_i^m$ (with the obvious meaning) and $I_{m,\alpha+1} \setminus I_{m,\alpha}$ is of cardinality $<\kappa$. Note that we can do that since each $S_i^{m-1}$ is $\kappa$-presentable in $\A$ and $I_{m-1,\alpha+1} \setminus I_{m-1,\alpha}$ is of cardinality $<\kappa$ by induction. It follows directly from the construction that $(X_\alpha)_{\alpha<\lambda}$ is a $\U$-filtration in $\Cpx\A$ \wrt the componentwise split exact structure.

$(3)\Longrightarrow (1)$ This is a consequence of (the proof of) \cite[Theorem 2.5]{St}.

Finally, if the assertions (1)--(3) hold, $(\Htp\B, \U^\perp) = (\Htp\B, \Htp\B^\perp)$ is a Bousfield localizing pair in $\Htp\A$ by Theorem~\ref{thm:accessible frobenius exact categories}(4). To prove that $\Htp\B$ is $\max(\kappa,\aleph_1)$-well generated, we may in view of~\cite[2.11 and 2.13(1)]{AR} assume that $\kappa$ is uncountable, and as above, we may also assume that $\B = \Sum(\clS)$. One now proves that $\Htp\B$ is $\kappa$-well generated using a ``componentwise'' argument. Namely, taking a representative set $\D_0$ of $\Sum_\kappa(\clS)$, one readily sees from Definition~\ref{def:well generated} that $\B$ is $\kappa$-well generated by $\D_0$, from which it is not hard to derive that $\Htp\B$ is $\kappa$-well generated by the set $\Cplus{\D_0}$.
\end{proof}

As an immediate corollary, we get a generalization of~\cite[Theorem 5.2]{St}:

\begin{cor} \label{cor:K(B) well generated}
Let $\A$ be an accessible category with arbitrary coproducts and let $\B$ be a class of objects of $\A$ closed under taking coproducts. Then the following assertions are equivalent:
\begin{enumerate}
\item $\Htp{\B}$ is well generated.
\item $\B \subseteq \Add(\clS)$ for some set $\clS \subseteq \B$.
\end{enumerate}
\end{cor}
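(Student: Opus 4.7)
The plan is to derive both implications from Proposition~\ref{prop:K(B) well generated}.

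For $(2) \Rightarrow (1)$, given $\clS \subseteq \B$ with $\B \subseteq \Add(\clS)$, I would first use that $\A$ is accessible to find a single regular cardinal $\kappa$ for which every object of $\clS$ is $\kappa$-presentable; this is standard closure-under-set property of accessible categories (cf.\ \cite[2.11--2.13]{AR}). Then Proposition~\ref{prop:K(B) well generated}, applied to the triple $(\B,\clS,\kappa)$, immediately yields that $\Htp\B$ is $\max(\kappa,\aleph_1)$-well generated, and in particular well generated.

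For $(1) \Rightarrow (2)$, the plan is to manufacture, from a set of generators of $\Htp\B$, a set $\clS \subseteq \B$ that verifies condition $(3)$ of Proposition~\ref{prop:K(B) well generated}; the implication $(3) \Rightarrow (1)$ of that proposition then delivers $\B \subseteq \Add(\clS)$. Concretely, assume $\Htp\B$ is well generated by a set $\mathcal{G}$ of generators, and let $\clS$ be (a set of representatives of) the family of components $\{G^i \mid G \in \mathcal{G},\ i \in \mathbb{Z}\}$, which is a set since $\mathcal{G}$ is. Because $\A$ is accessible, I can choose a regular cardinal $\kappa$ so that every object of $\clS$ is $\kappa$-presentable. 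Put $\U = \Cplus{\Sum_\kappa(\clS)}$.

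The substantive step is the verification $\Htp\B = \Loc\underline{\U}$. The inclusion $\Loc\underline{\U} \subseteq \Htp\B$ is trivial since $\clS \subseteq \B$. For the reverse, note that Definition~\ref{def:well generated}(1) forces $\mathcal{G}^\perp = 0$ in $\Htp\B$, which combined with well-generatedness gives $\Loc\mathcal{G} = \Htp\B$ (cf.\ \cite[Proposition 3.6]{St} or \cite[Lemma 3.3]{BIK}); hence it is enough to show $\mathcal{G} \subseteq \Loc\underline{\U}$. For each $G \in \mathcal{G}$, consider the brutal truncations $G_n$ (agreeing with $G$ in degrees $\geq -n$ and zero below), which are sub-complexes of $G$ with components in $\clS$ and bounded below, hence lie in $\U$. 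The inclusions $G_0 \hookrightarrow G_1 \hookrightarrow \cdots$ are degreewise split monomorphisms with colimit $G$, and the standard homotopy-colimit triangle
$$ \coprod_{n \geq 0} G_n \xrightarrow{1-\mathrm{shift}} \coprod_{n \geq 0} G_n \la G \laplus $$
realizes $G$ as a homotopy colimit in $\Htp\A$, so $G \in \Loc\underline{\U}$.

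The main obstacle is precisely this last step: the set $\U$ consists of bounded-below complexes, while generators in $\mathcal{G}$ are in general unbounded, so one must realize unbounded complexes as homotopy colimits of their brutal truncations inside the triangulated category. Once $\Htp\B = \Loc\underline{\U}$ is in hand, the proof closes by a direct appeal to Proposition~\ref{prop:K(B) well generated}.
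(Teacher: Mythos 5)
Your proof is correct. The $(2)\Rightarrow(1)$ direction is exactly the paper's argument. For $(1)\Rightarrow(2)$ you take a genuinely different, though closely related, route: the paper picks a set $\D_0$ with $\Htp\B=\Loc\D_0$, lets $\clS$ be the components of complexes in $\D_0$, and then directly re-runs the argument of the cited Theorem~2.5 of~\cite{St} to conclude $X\in\Add(\clS)$ for every stalk complex $X\in\B$; it never needs to verify condition~(3) of Proposition~\ref{prop:K(B) well generated} for the full class $\U=\Cplus{\Sum_\kappa(\clS)}$. You instead establish $\Htp\B=\Loc\underline\U$ outright, which forces you to handle unbounded generators, and your homotopy-colimit-of-brutal-truncations argument (the same device the paper uses in the proof of Proposition~\ref{prop:left adjoint}) does this correctly: the truncations $\sigma^{\geq -n}G$ lie in $\U$ because each component of $G$ is a single object of $\clS$, hence in $\Sum_\kappa(\clS)$, and the degreewise split colimit gives the required triangle. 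The payoff of your version is that you can then quote the implication $(3)\Rightarrow(1)$ of Proposition~\ref{prop:K(B) well generated} as a black box rather than its proof; the cost is the extra hocolim step, which the paper's more direct argument avoids. Both proofs ultimately rest on the same external input (the argument of~\cite[Theorem 2.5]{St}), and the standard facts you invoke --- that well-generatedness yields $\Loc\mathcal{G}=\Htp\B$, and that a set of $\kappa$-presentable objects exists for a single regular $\kappa$ --- are exactly those the paper uses elsewhere.
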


\begin{proof}
$(2)\Longrightarrow (1)$ This follows from Proposition~\ref{prop:K(B) well generated} by choosing $\kappa$ large enough so that all objects from $\clS$ are $\kappa$-presentable in $\A$.

$(1)\Longrightarrow (2)$ $\Htp{\B}$ being well generated implies that $\Htp{\B} = \Loc \D_0$ for some set $\D_0 \subseteq \Htp{\B}$; see~\cite[Theorems 8.3.3 and 8.4.2]{Nee2}. In particular any object $X \in \B$, viewed as a complex concentrated in degree zero, belongs to $\Loc \D_0$. Let us now take a representative set $\clS \subseteq \B$ for all objects which occur in components of complexes in $\D_0$. The same argument as in the proof of~\cite[Theorem 2.5]{St} shows that $X \in \Add\clS$. Since $X \in \B$ has been chosen arbitrarily, we have proved that $\B \subseteq \Add(\clS)$.
\end{proof}

\begin{exem} \label{expl:K(Proj)}
The following fact was obtained for certain rings in~\cite{Jo} and in full generality in~\cite{Nee3}: Let $R$ be a ring, $\Proj{R}$ the category of projective right $R$-modules and $\U$ the full subcategory of $\Htp{\Proj{R}}$ consisting of bounded below complexes of free modules of finite rank. Then $\Htp{\Proj{R}} = \Loc\U$. It is also proved in~\cite{Nee3} that $\Htp{\Proj{R}}$ is $\aleph_1$-well generated. Both facts are now obtained by putting $\B=\Proj{R}$, $\clS =\{R\}$ and $\kappa=\aleph_0$ in
Proposition~\ref{prop:K(B) well generated}.
\end{exem}

\subsection{Triangulated adjoints without Brown representability}
\label{subsec:adjoints without Brown representability}

A typical problem for triangulated categories is to construct a left or right adjoint to a given triangulated functor $F: \C \la \D$. If $\C$ is well generated in the sense of Definition~\ref{def:well generated}, then $F$ has a right adjoint \iff it preserves coproducts, \cite[8.4.2 and 8.4.4]{Nee2}. This is because of the Brown Representability Theorem \cite[8.3.3]{Nee2}. However, Neeman~\cite{Nee3,Nee} and Murfet~\cite{Mur} recently constructed right adjoints to inclusion functors $\C \infl \D$ where $\C$ typically was not well generated, see~\cite[Theorem 5.2]{St}. Perhaps the simplest example of this type is the embedding $\Htp\F \infl \Htp{\mathrm{Ab}}$, where $\mathrm{Ab}$ is the category of abelian groups and $\F$ the full subcategory of all torsion-free groups (see~\cite[Example 5.3]{St} and Proposition~\ref{prop:adjoints in the homotopy category of a Grothendieck category} below). Their motivation was a connection to Grothendieck duality---a nice overview is given in~\cite{Nee4}. Here we give tools to recover these results as special cases of a general phenomenon.

First, we need to introduce some notation. All through this section $\G$ will be a Grothendieck category and we consider it as an exact category with the abelian exact structure, as in Example~\ref{expl:grothendieck category}. We recall the following notation from \cite{G1}, a part of which we need for a later use:

\begin{nota} \label{not:tilde F and dg-F}
Let $\F$ be a class of objects in $\G$ and put $\T = \F^{\perp_1}$ and $\U = {^{\perp_1} \F}$ (see Section~\ref{subsec:approx}).
\begin{enumerate}
\item We denote by $\tilde\F$ the class of all acyclic complexes $X\in \Cpx\G$ such that $Z^n(X)\in\F$ for all $n\in\bbZ$, where the $Z^n(X) = \Ker \partial^n$ are the cycle objects of $X$.

\item We denote by $\dg\F$ the class of all complexes $X\in \Cpx\F$ \st every chain complex map $X\la Y$, with $Y\in\tilde\T$, is null-homotopic. Here $\tilde\T$ follows the notation of (1) for $\T$ in place of $\F$.

\item Dually, we denote by $\codg\F$ the class of all complexes $X\in \Cpx\F$ \st every map $Y\la X$, with $Y\in\tilde\U$, is null-homotopic.
\end{enumerate}
\end{nota}

Now we give a general statement for construction of right adjoints to inclusions:

\begin{prop} \label{prop:adjoints in the homotopy category of a Grothendieck category}
Let $\G$ be a Grothendieck category, considered with the abelian exact structure, and let $\F \subseteq \G$ be a deconstructible class. Then the following hold:
\begin{enumerate}
 \item The inclusion $\Htp\F \infl \Htp\G$ has a right adjoint.
 \item The inclusion $\underline{\tilde\F }\infl \Htp\G$ has a right adjoint.
 \item If $\F$ contains a generator of $\G$, then also $\underline{\dg\F }\infl \Htp\G$ has a right adjoint.
\end{enumerate}
In particular, $(\Htp\F,\Htp\F^\perp)$,
$(\underline{\tilde\F},\tilde\F^\perp)$ and, if $\F$
contains a generator, then also
$(\underline{\dg\F},\dg\F^\perp)$ are Bousfield
localizing pairs (cf. Definition~\ref{def:t-struct}) in $\Htp\G$.
\end{prop}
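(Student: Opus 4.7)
The plan is to derive all three statements from Corollary~\ref{cor:moving between exact structures}. Set $\C = \Cpx\G$, let $\mathbf{E}$ be the componentwise split short exact sequences (making $\C_\mathbf{E}$ a Frobenius exact category, cf.\ Examples~\ref{expl:semisplit structure} and~\ref{expl:efficient exact categories}(3)), and let $\mathbf{E}'$ be the abelian exact structure on $\Cpx\G$ inherited from $\G$ (making $\C_{\mathbf{E}'}$ an efficient exact category, cf.\ Examples~\ref{expl:grothendieck category} and~\ref{expl:efficient exact categories}(1)). Under these choices $\stC_\mathbf{E} = \Htp\G$, and the Heller cosyzygy $\Omega_\mathbf{E}^{-1}$ coincides with the usual shift $[1]$ up to homotopy equivalence. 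Each application of Corollary~\ref{cor:moving between exact structures} then produces a Bousfield localizing pair $(\underline\X,\underline\X^\perp)$ in $\Htp\G$, whose associated approximation triangles (cf.\ Remark~\ref{rem:functorial triangles}) supply the desired right adjoint to the inclusion $\underline\X \infl \Htp\G$.

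For each $\X \in \{\Cpx\F,\,\tilde\F,\,\dg\F\}$ two verifications are needed: (a) $\underline{\Omega_\mathbf{E}^{-1}\X} = \underline\X$ in $\Htp\G$, which is immediate in each case because the defining conditions are shift-invariant; and (b) $\X = \Filt\clS$ in $\C_{\mathbf{E}'}$ for some set of complexes $\clS$ (i.e.\ deconstructibility in the abelian exact structure on $\Cpx\G$). All the real work is in (b).

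Fix a presentation $\F = \Filt\clS_0$ in $\G$. For~(1) we take $\clS$ to be the set of stalk complexes $S^n(A)$ with $A \in \clS_0$ and $n \in \mathbb{Z}$; a filtration of a given $Y \in \Cpx\F$ is built by combining a transfinite stupid-truncation filtration of $Y$ (needed to handle complexes unbounded in both directions) with degree-wise refinement by $\clS_0$-filtrations via Lemma~\ref{lem:refining filtrations}. For~(2) we take $\clS'$ to be the set of disk complexes $D^n(A) = (\cdots 0 \to A \xrightarrow{\mathrm{id}} A \to 0 \cdots)$, $A \in \clS_0$, $n \in \mathbb{Z}$; a filtration of an arbitrary $Y \in \tilde\F$ is assembled from the cycle objects $Z^n(Y) \in \F$ together with their $\clS_0$-filtrations.

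The main obstacle is case~(3). Here the assumption that $\F$ contains a generator of $\G$ is essential: it guarantees that $\T = \F^{\perp_1}$ is cogenerated by an injective object $E$ of $\G$, from which one assembles a set of test complexes $\clT'$ whose right $\perp_1$-orthogonal in $\Cpx\G$ is exactly $\tilde\T$. Applying Corollary~\ref{cor:consequences of Theorem}(2) to the efficient exact category $\Cpx\G$ then yields a complete cotorsion pair $({^{\perp_1}\tilde\T},\,\tilde\T)$ whose left class is, up to direct summands, deconstructible. A standard identification---resting on the fact that a chain map $X \to Y$ with $Y$ acyclic is null-homotopic if and only if a corresponding extension of $X$ by $Y$ built from the mapping cone splits---gives ${^{\perp_1}\tilde\T} = \dg\F$; enlarging $\clS_0$ to include a generator of $\G$ and its injective hull absorbs the summands into the filtration and delivers outright deconstructibility of $\dg\F$. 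This identification is both the subtlest point and the key place where the generator hypothesis is used.
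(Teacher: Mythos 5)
Your top-level reduction is exactly the one the paper uses: everything is funnelled through Corollary~\ref{cor:moving between exact structures} with $\mathbf{E}$ the componentwise split and $\mathbf{E}'$ the abelian exact structure on $\Cpx\G$, and the only substantive input is that $\Cpx\F$, $\tilde\F$ and $\dg\F$ are deconstructible in $\Cpx{\G}_{\mathbf{E}'}$. The paper simply quotes this from \cite[Theorem 4.2]{St2}; you instead try to prove it on the spot, and that is where the argument breaks down.

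The filtration sets you propose for (1) and (2) do not work. For (1), it is false in general that $\Cpx\F = \Filt{\{S^n(A) \mid A \in \clS_0,\, n\in\bbZ\}}$: take $\G = \Mod{R}$ with $R = \bbZ/4\bbZ$, $\F = \Proj{R} = \Filt{\{R\}}$, and let $Y$ be the complex with $Y^n = R$ for all $n$ and every differential multiplication by $2$. Then $Y \in \Cpx\F$, but $Y$ has \emph{no} nonzero subcomplex isomorphic to a stalk complex $S^n(R)$ (this would need $U \subseteq Y^n$ with $U \cong R$ and $dU = 0$, forcing $U = R$ and $2R = 0$), so a stalk filtration of $Y$ cannot even start; note that the stupid truncations $\sigma^{\geq m}Y$ only exhaust $Y$ in the downward direction, so they do not rescue complexes unbounded (or merely nontrivial) above. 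For (2), the complex $\cdots \to \bbZ/4 \overset{2}\la \bbZ/4 \overset{2}\la \cdots$ is acyclic with all cycle objects $\cong \bbZ/2$, yet it has no nonzero subcomplex isomorphic to any disk $D^n(A)$, because that would require $d$ to be injective on a nonzero submodule of $\bbZ/4$ while $\ker d = 2\bbZ/4$ meets every nonzero submodule; in fact $\Filt{\{D^n(A)\}}$ typically consists only of contractible complexes (each $D^n(A)$ with $A$ projective is projective in $\Cpx\G$, so the filtrations split). The correct filtration sets must consist of suitably presentable complexes lying \emph{in the class itself}, and producing them is the actual content of \cite[Theorem 4.2]{St2} (a Hill-lemma type argument). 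Finally, for (3) your outline only yields that ${^{\perp_1}\tilde\T}$ is deconstructible \emph{up to direct summands}, whereas Corollary~\ref{cor:moving between exact structures} requires the class to be literally of the form $\Filt\clS$; the claim that enlarging $\clS_0$ ``absorbs the summands'' is not justified. So the architecture of your proof is right, but the deconstructibility step---which is the mathematical core---is not established.
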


\begin{proof}
\cite[Theorem 4.2]{St2} says that under the given assumptions, $\Cpx\F$, $\tilde\F$ and $\dg\F$ are deconstructible in $\Cpx\G$ with the abelian exact structure. Now apply Corollary~\ref{cor:moving between exact structures}, with $\mathbf{E}$ and $\mathbf{E}'$ the classes of componentwise split exact and all exact sequences, respectively.
\end{proof}

Let us illustrate Proposition~\ref{prop:adjoints in the homotopy category of a Grothendieck category} on a few examples, where it allows us to construct recollements (cf.~\cite[\S1.4]{BBD}, \cite[\S9.3]{Nee2} or~\cite{NS} for the concept).

\begin{exem} \label{expl:derived via projectives}
Let $R$ be a ring, $\G = \Mod{R}$ and $\clP = \Proj{R}$ denote the
class of all projective modules. It is well known that
$\underline{\dg\clP} = \Loc\{R\}$ in $\Htp\G$, and that
$\dg\clP^\perp =\tilde\G$, the class of all acyclic
complexes of $R$-modules. It follows from
Proposition~\ref{prop:adjoints in the homotopy category of a
Grothendieck category} that the inclusions $\underline{\dg\clP}\infl
\Htp\G$ and $\tilde\G \infl \Htp\G$ have right adjoints and
$(\underline{\dg\clP},\tilde\G)$ is a Bousfield localizing pair.
This means nothing else than the fact that we have the well known
recollement
$$
\xymatrix@=1.3cm{
\xymoremargin{\tilde\G} \arinfl[r] &
\xymoremargin{\Htp\G} \ar[r] \ar@<1ex>[l] \ar@<-1ex>[l] &
\xymoremargin{\Der\G} \arinfl@<1ex>[l] \arinfl@<-1ex>[l]
}
$$
and the composition $\dg\clP \infl \Htp\G \overset{Q}\la
\Der\G$ is a triangle equivalence.
\end{exem}

\begin{exem} \label{expl:K(Proj) revisited}
Let us now focus on a little less known example
from~\cite{Nee3,Nee}. Let $R$ be a ring, $\clP = \Proj{R}$ and $\F =
\Flat{R}$, the class of all flat right $R$-modules. Then the
inclusion $\Htp\clP \infl \Htp{\Mod{R}}$ has a right adjoint by
Proposition~\ref{prop:adjoints in the homotopy category of a
Grothendieck category} (cf.\ also Example~\ref{expl:K(Proj)}), hence
also $\Htp\clP \infl \Htp\F$ has a right adjoint.
Interpreting~\cite[Theorem 8.6]{Nee3} in our terminology, we learn
that $(\Htp{\clP},\tilde\F)$ is a Bousfield localizing
pair in $\Htp\F$. Using the well-known fact that flat modules form a
deconstructible class in $\Mod{R}$, Proposition~\ref{prop:adjoints
in the homotopy category of a Grothendieck category} yields
existence of a right adjoint to $\underline{\tilde\F}\infl
\Htp{\Mod{R}}$. In particular, $\tilde\F\infl \Htp\F$
has a right adjoint. To summarize, we have obtained a recollement
$$
\xymatrix@=1.3cm{ \xymoremargin{\tilde\F} \arinfl[r] &
\xymoremargin{\Htp\F} \ar[r] \ar@<1ex>[l] \ar@<-1ex>[l] &
\xymoremargin{\Htp\F / \tilde\F} \arinfl@<1ex>[l]
\arinfl@<-1ex>[l] }
$$
\st the composition $\Htp\clP \infl \Htp\F \overset{Q}\la \Htp\F /
\tilde\F$ is a triangle equivalence.
\end{exem}

Before discussing a generalization of this example for schemes, we give a partial answer to a question in~\cite[Remark B.7]{Mur} as to when the class of flat quasi-coherent sheaves is closed under products. We prove this for {\bf Dedekind schemes}, that is, for noetherian integral schemes whose rings of regular functions on affine open subsets are Dedekind domains, \cite[(7.13), p. 188]{GW}. Note that any non-singular connected curve (affine or projective, cf.~\cite[\S I.6]{Hart2}) gives rise to a Dedekind scheme, see~\cite[Example 15.2]{GW}.

\begin{prop} \label{prop:sheaves closed under products}
Let $\bbX$ be a Dedekind scheme. Then the class $\F = \Flat\bbX$ of flat quasi-coherent sheaves over $\bbX$ is closed under taking products in $\Qco\bbX$.
\end{prop}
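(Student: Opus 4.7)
The plan is to characterize flatness of a quasi-coherent sheaf on the Dedekind scheme $\bbX$ via the unit of the adjunction $j^{*} \dashv j_{*}$ at the inclusion $j\colon \eta = \operatorname{Spec} K \hookrightarrow \bbX$ of the generic point ($K$ being the function field), and then transfer the criterion to products through naturality. A preliminary observation is that $j_{*}$ sends $K$-vector spaces into $\Qco\bbX$: for a $K$-vector space $G$ and an affine open $U = \operatorname{Spec} R$, $(j_{*} G)\vert_{U}$ has value $G$ on every non-empty open (since $\eta \in U$), and this matches the quasi-coherent sheaf associated to $G$ viewed as an $R$-module via $R \hookrightarrow K$ (localizing $G$ at any nonzero element of $R$ leaves $G$ unchanged). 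Being right adjoint to $j^{*}$, the functor $j_{*}$ preserves all limits, in particular products.

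The first step is the flatness criterion: a quasi-coherent sheaf $\F \in \Qco\bbX$ is flat if and only if the unit $\eta_{\F}\colon \F \to j_{*} j^{*} \F$ is a monomorphism in $\Qco\bbX$. Working over an affine open cover $\{U_{i} = \operatorname{Spec} R_{i}\}$ with each $R_{i}$ a Dedekind domain, the unit on sections over $U_{i}$ becomes the natural map $\F(U_{i}) \to \F(U_{i}) \otimes_{R_{i}} K$, which is injective precisely when the $R_{i}$-module $\F(U_{i})$ is torsion-free, equivalently flat over the Dedekind domain $R_{i}$. Since both flatness of a quasi-coherent sheaf and monicity of a morphism in $\Qco\bbX$ are local conditions, the equivalence follows.

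For the second step, let $(F_{i})_{i \in I}$ be a family of flat quasi-coherent sheaves and $P = \prod^{\Qco\bbX} F_{i}$ their product, with projections $\pi_{i}\colon P \to F_{i}$. The composites $\eta_{F_{i}} \circ \pi_{i}$ induce a canonical morphism $\Phi\colon P \to \prod j_{*}j^{*} F_{i}$, and the naturality identity $\eta_{F_{i}} \circ \pi_{i} = j_{*}j^{*}\pi_{i} \circ \eta_{P}$ shows that $\Phi$ factors as $\Psi \circ \eta_{P}$ for some $\Psi\colon j_{*}j^{*}P \to \prod j_{*}j^{*}F_{i}$ (using that $j_{*}$ preserves products). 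On the other hand, $\Phi$ coincides with $\prod \eta_{F_{i}}$, and by Step~1 each $\eta_{F_{i}}$ is monic; therefore $\Phi$ itself is monic (products preserve monomorphisms, being limits), which forces $\eta_{P}$ to be monic, so that $P$ is flat by Step~1 once more. The only technical point is the preliminary identification of $j_{*}j^{*}\F$ on affine opens, handled above using that every nonempty open of an integral scheme contains the generic point; the rest of the argument is purely formal.
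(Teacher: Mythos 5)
Your proof is correct, but it follows a genuinely different route from the paper's. The paper identifies $\Flat\bbX$ with the class of direct limits of locally free coherent sheaves, proves that $\vect\bbX$ is preenveloping in $\coh\bbX$ (by splitting off the torsion part of a coherent sheaf), and then invokes the Crawley--Boevey criterion \cite[Theorem 4.2]{CB}, which relates closure of $\varinjlim\mathcal{C}$ under products to $\mathcal{C}$ being preenveloping among the finitely presented objects; this in turn requires checking first that $\Qco\bbX$ is locally noetherian with $\coh\bbX$ as its subcategory of finitely presented objects. You bypass all of that machinery: reformulating flatness (torsion-freeness over the Dedekind local sections) as monicity of the unit $M \to j_*j^*M$ at the generic point $j\colon \operatorname{Spec} K \hookrightarrow \bbX$ turns the statement into a purely formal one, since $j_*j^*(-)$ is a functor landing in $\Qco\bbX$ and a product of monomorphisms is a monomorphism. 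The two points that genuinely need verification are exactly the ones you address: that $j_*$ of a $K$-vector space is quasi-coherent (so the adjunction restricts to $\Qco\bbX$ and $j_*$ is a right adjoint there), and that monicity in $\Qco\bbX$ can be tested on sections over affine opens (which holds because kernels of maps of quasi-coherent sheaves are quasi-coherent, so monomorphisms in $\Qco\bbX$ coincide with sheaf monomorphisms, even though infinite products in $\Qco\bbX$ are not sheaf products). Your argument is more elementary and self-contained; the paper's approach has the side benefit of establishing the preenveloping property of $\vect\bbX$ in $\coh\bbX$, which is of independent interest and fits the approximation-theoretic theme of the paper. Both arguments ultimately rest on the same structural fact that flat equals torsion-free over a Dedekind domain.
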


\begin{proof}
Let us recall a few standard facts: By~\cite[Exercise II.5.15(e)]{Hart2}, $\Qco\bbX$ is an $\aleph_0$-accessible category and the subcategory of $\aleph_0$-presentable sheaves is precisely $\coh\bbX$, the category of coherent sheaves. Moreover, each coherent sheaf satisfies the ascending chain condition on subobjects by~\cite[Proposition II.5.4]{Hart2}. In other words, $\Qco\bbX$ is what one calls a locally noetherian Grothendieck category.

Further, for each closed point $x \in \bbX$ there is a (unique) sheaf $S_{\{x\}}$ whose stalk at $x$ is $\OX{\bbX,x}/\mathfrak{m}_{X,x}$ and such that $(S_{\{x\}})_y = 0$ for each $y \in \bbX \setminus \{x\}$. It is immediate that $S_{\{x\}}$ a simple coherent sheaf supported at $\{x\}$, a so called skyscraper sheaf in~$x$ (cf.~\cite[Exercise II.1.17]{Hart2} or~\cite[Exercise 2.14]{GW}). We now claim the following:
\begin{enumerate}
 \item $M \in \Qco\bbX$ is flat \iff the stalk $M_x$ is a torsion-free $\OX{\bbX,x}$-module for each closed point $x \in \bbX$;
 \item The class $\vect\bbX$ of locally free coherent sheaves is preenveloping in $\coh\bbX$.
\end{enumerate}

To see (1), note that $M$ is flat \iff all stalks at closed points are flat by~\cite[Theorem 7.1]{Mat}. Since all such stalks $\OX{\bbX,x}$ are discrete valuation domains, $M_x$ is flat \iff it is torsion-free. For (2), note that $M \in \coh\bbX$ is locally free \iff $\Hom_{\coh\bbX}(S_{\{x\}},M) = 0$ for all closed points $x$. Indeed, each $S_{\{x\}}$ is supported at a single closed point, so this is a local question. The corresponding fact for affine Dedekind schemes follows from the isomorphisms:
$$
\Hom_{R_\mathfrak{m}}(R/\mathfrak{m}, M_\mathfrak{m}) \cong
\big(\Hom_R(R/\mathfrak{m}, M)\big)_\mathfrak{m} =
\Hom_R(R/\mathfrak{m}, M),
$$
where $R$ is a Dedekind domain and $\mathfrak{m} \subseteq R$ is maximal; see~\cite[Theorem 7.11]{Mat}. Given $M \in \coh\bbX$ and denoting by $tM$ the unique maximal $\clS$-filtered coherent subsheaf of $M$, where $\clS = \{S_{\{x\}} \mid x \in \bbX \textrm{ closed} \}$, we get an epimorphism $p_M: M \defl M/tM$, where $M/tM \in \vect\bbX$ by~\cite[Theorem 7.12]{Mat} and $p_M$ is a $\vect\bbX$-preenvelope. In fact, $p_M$ usually splits; see~\cite[p. 127]{AHK}. This proves the claim.

Finally, it follows from (1) that $M$ is flat \iff each coherent subsheaf is flat and, by~\cite[Theorem 7.12]{Mat}, a coherent sheaf is flat if and only if it is locally free. Therefore $\F$ is the class of direct limits of locally free coherent sheaves. Invoking~\cite[Theorem 4.2]{CB}, we learn that $\F$ is closed under products in $\Qco\bbX$ \iff $\vect\bbX$ is preenveloping in $\coh\bbX$, which has been proved in~(2).
\end{proof}

\begin{exem} \label{expl:mock homotopy categories of projectives}
A generalization of Example~\ref{expl:K(Proj) revisited} for non-affine schemes was first studied by Murfet~\cite{Mur}. Suppose we have a scheme $\bbX$ and denote by $\F = \Flat\bbX$ the class of all flat quasi-coherent sheaves. Since the class of flat modules is deconstructible over any commutative ring and deconstructibility of a class of sheaves can be tested locally by (the proof of) \cite[Theorem 3.8]{EGPT} (see also~\cite{St2}), we infer that $\F$ is deconstructible in $\Qco\bbX$. In particular, the inclusion $\tilde\F\infl \Htp{\Qco\bbX}$, and also $\tilde\F \infl \Htp\F$, have right adjoints by Proposition~\ref{prop:adjoints in the homotopy category of a Grothendieck category}. This provides us with a Bousfield localizing pair $(\tilde\F,\tilde\F^\perp)$ in $\Htp\F$. Hence, the Verdier quotient $\Hmock{\Proj\bbX} = \Htp\F / \tilde\F$, for which Murfet coined the term `mock homotopy category of projectives', is well behaved. For instance it is triangle equivalent to $\tilde\F^\perp$, and we immediately obtain a so-called localization sequence (cf.~\cite[Theorem 3.16]{Mur})
$$
\xymatrix@=1.3cm{ \xymoremargin{\tilde\F}
\arinfl@<.5ex>[r] & \xymoremargin{\Htp\F} \ar@<.5ex>[r]
\ar@<.5ex>[l] & \xymoremargin{\Hmock{\Proj\bbX}} \arinfl@<.5ex>[l] }
$$
Note, however, that we cannot expect to get a recollement here as in Example~\ref{expl:K(Proj) revisited}. Indeed, the inclusion $\Htp\F \infl \Htp{\Qco\bbX}$ has a left adjoint for a Dedekind scheme~$\bbX$ by the forthcoming Corollary~\ref{cor:left adjoint deconstructible}, since $\F$ is deconstructible and closed under products by Proposition~\ref{prop:sheaves closed under products}. If the mentioned recollement existed, the inclusion $\tilde\F \infl \Htp{\Qco\bbX}$ would also have a left adjoint, which would in particular imply that $\tilde\F$ would be closed under taking products in $\Htp{\Qco\bbX}$. Then a product of epimorphisms between flat quasi-coherent sheaves over $\bbX$ would have to be an epimorphism again. This is known to be false for $\bbX = \PoneC$, see \cite[Example 4.9]{Kr5}. One can prove even more, see~\cite[Corollary A.14]{Mur}.

Using this general principle, we can also construct adjoints to inclusions of other remarkable classes of sheaves. Let $\V = \Vect\bbX$ be the class
of all locally projective quasi-coherent sheaves. Using \cite[Theorem 3.8]{EGPT} and Proposition~\ref{prop:adjoints in the homotopy category of a Grothendieck category}
as above, we get a localization sequence
$$
\xymatrix@=1.3cm{ \xymoremargin{\tilde\V}
\arinfl@<.5ex>[r] & \xymoremargin{\Htp\V} \ar@<.5ex>[r]
\ar@<.5ex>[l] & \xymoremargin{\Htp\V / \tilde\V}
\arinfl@<.5ex>[l] }
$$
Now observe that $\underline{\tilde\V} = \Htp\V \cap \tilde\F$ in $\Htp\F$, since we have this equality for any affine scheme by~\cite[2.14 and 2.15]{Nee3} and all involved classes are defined by local properties (cf.~\cite{RG} and~\cite[\S2]{Dr}). This yields a natural triangulated
functor
$$ \Htp\V / \tilde\V \la \Htp\F / \tilde\F = \Hmock{\Proj\bbX}. $$
The obvious question is: Can this functor be an equivalence? Is it an equivalence for instance when $\bbX$ is a quasi-projective variety and we have enough vector bundles (see~\cite[Lemma 2.1.3]{TT})? If so, we would have a presentation of $\Hmock{\Proj\bbX}$ in terms of vector bundles.
\end{exem}

So far, we have been concerned only with right adjoints and precovers. Let us briefly discuss the dual situation, where again we have two options. First, if we have a product preserving triangulated functor $F: \C \la \D$ and $\C$ is compactly generated, then the existence of a left adjoint to $F$ follows from~\cite[Theorem 8.6.1]{Nee2} and~\cite[\S1.2]{Kr4}. Second, Proposition~\ref{prop:Neeman's} dualizes smoothly, so we can, under some circumstances, construct a left adjoint using preenvelopes.

\begin{prop} \label{prop:left adjoint}
Let $\E$ be an efficient exact category and let $\Y \subseteq \E$ be a class of objects. Then the inclusion $\Htp\Y \infl \Htp\E$ has a left adjoint and $({^\perp \Htp\Y}, \Htp\Y)$ is a Bousfield localizing pair in $\Htp\E$ provided  one of the following two conditions holds:
\begin{enumerate}
\item There is a set of objects $\clS \subseteq \E$ \st $\Y = \clS^{\perp_1}$.
\item $\Y$ is preenveloping and closed under taking coproducts in $\E$.
\end{enumerate}
In such a case, even if $\Y$ does not have all coproducts, the homotopy category $\Htp\Y$ does. 
\end{prop}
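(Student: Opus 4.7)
The plan is to verify the hypotheses of the dual of Proposition~\ref{prop:Neeman's} applied to $\D = \Htp{\E}$ and the subcategory $\Htp{\Y}$. Since $\E$ is efficient, $\Cpx{\E}$ equipped with the componentwise split exact structure is again efficient by Example~\ref{expl:semisplit structure} together with Examples~\ref{expl:efficient exact categories}(2)--(3); it is moreover a Frobenius exact category whose stable category is $\Htp{\E}$. Consequently $\Cpx{\E}$ has all coproducts by Lemma~\ref{lem:existence of exact coproducts}, while $\Htp{\E}$ inherits coproducts and splitting idempotents by~\cite[I.6.8]{Nee2}. Moreover, the shift of complexes preserves the property of having all components in $\Y$, so $\Htp{\Y}$ is a triangulated (in particular, cosuspended) subcategory of $\Htp{\E}$. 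It therefore suffices to prove that $\Htp{\Y}$ is preenveloping in $\Htp{\E}$; the dual of Proposition~\ref{prop:Neeman's} then supplies the left adjoint and the Bousfield localizing pair $({}^\perp\Htp{\Y}, \Htp{\Y})$, after which coproducts in $\Htp{\Y}$ arise as images of coproducts in $\Htp{\E}$ under the left adjoint.

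For Case~(1), I equip $\Cpx{\E}$ additionally with the termwise exact structure $\mathbf{E}'$, whose conflations are those that are conflations of $\E$ in every degree; this $(\Cpx{\E})_{\mathbf{E}'}$ is again efficient. For each $S \in \clS$ and $n \in \mathbb{Z}$, let $D^n(S)$ denote the disk complex with $S$ in degrees $n-1$ and $n$ and identity differential, and set $\tilde{\clS} = \{D^n(S) \mid S \in \clS,\, n \in \mathbb{Z}\}$. A pullback-and-pushout argument, using that a $D^n(S)$-extension of $Y$ in $(\Cpx{\E})_{\mathbf{E}'}$ is determined up to equivalence by the induced conflation on degree $n-1$ (the object $Z^n$ being recoverable as the pushout of $Y^{n-1} \to Z^{n-1}$ along $d^{n-1}_Y$, with all other differentials forced), identifies $\Ext^1_{(\Cpx{\E})_{\mathbf{E}'}}(D^n(S), Y)$ with $\Ext^1_\E(S, Y^{n-1})$. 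Hence $\tilde{\clS}^{\perp_1} = \Cpx{\Y}$. Applying Corollary~\ref{cor:consequences of Theorem}(1) in $(\Cpx{\E})_{\mathbf{E}'}$ shows that $\Cpx{\Y}$ is preenveloping there, and any resulting preenvelope $M \infl T$ is a chain map with $T \in \Cpx{\Y}$ which, upon passing to $\Htp{\E}$, remains a $\Htp{\Y}$-preenvelope: any morphism $M \to T'$ in $\Htp{\E}$ with $T' \in \Cpx{\Y}$ is represented by a chain map factoring through $M \infl T$ by the preenveloping property in $\mathbf{E}'$.

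For Case~(2), given $M \in \Cpx{\E}$, I plan to construct the preenvelope by a Cartan--Eilenberg-style argument: iteratively choose $\Y$-preenvelopes of the components $M^n$ in $\E$ and, using closure of $\Y$ under coproducts, enlarge and adjust the choices so that they assemble into a chain complex $T \in \Cpx{\Y}$ together with a chain map $M \to T$. The universality of the componentwise preenvelopes, together with the way in which obstructions can be absorbed into the coproducts, yields that the resulting $M \to T$ is a $\Htp{\Y}$-preenvelope in $\Htp{\E}$.

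The most delicate point in Case~(1) will be justifying the pushout-based classification of termwise $D^n(S)$-extensions so as to obtain the identification of $\Ext^1$ groups, and in Case~(2) the coherent Cartan--Eilenberg assembly, where closure of $\Y$ under coproducts is used essentially to reconcile the componentwise choices into a genuine chain complex and a chain map with the required universal property.
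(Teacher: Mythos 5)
Your overall strategy (reduce to preenveloping-ness of $\Htp\Y$ and invoke the dual of Proposition~\ref{prop:Neeman's}) is the paper's strategy, and your Case~(1) is essentially the paper's argument: disk complexes over $\clS$ in the termwise-exact structure $\mathbf{E}'$, the identification $\Ext^1_{\Cpx\E_{\mathbf{E}'}}(D^n(S),Y)\cong\Ext^1_\E(S,Y^{n-1})$, hence $\tilde\clS^{\perp_1}=\Cpx\Y$, and then the small object argument. One caveat there: you assert that $(\Cpx\E)_{\mathbf{E}'}$ is ``again efficient'' in order to quote Corollary~\ref{cor:consequences of Theorem}(1), but (Ax3) for \emph{arbitrary} sets in $\Cpx\E_{\mathbf{E}'}$ is not covered by Examples~\ref{expl:efficient exact categories} (that category need not have enough projectives or a generating set). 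The paper avoids this by taking a strongly homological set $\I$ for $\clS$ in $\E$ (which exists because $\E$ is efficient), checking that the induced set $\hat\I$ of disk inflations is strongly homological in $\Cpx\E_{\mathbf{E}'}$ with $\Coker(\hat\I)=\tilde\clS$, and applying Theorem~\ref{thm:orthogonality of morphisms - cotorsion pairs}(4) directly. You should patch your argument the same way; as written, the appeal to efficiency of $\Cpx\E_{\mathbf{E}'}$ is unjustified.

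The genuine gap is in Case~(2). A Cartan--Eilenberg-style degreewise assembly of $\Y$-preenvelopes into a complex is an \emph{induction on degrees} and needs a base case; it works for bounded-below complexes (this is the dual of~\cite[Lemma I.4.6(i)]{Hart}, and it is exactly what the paper does for $\Cplus\E$), but for a complex unbounded below there is no degree at which to start, and your plan to ``enlarge and adjust the choices'' using coproducts gives no mechanism to resolve this. Note that the naive unbounded candidate---assembling the componentwise preenvelopes $e^n\colon M^n\to E^n$ into a coproduct of disk complexes on the $E^n$---is contractible, hence zero in $\Htp\E$, so it cannot be a preenvelope. The paper bridges this gap with a genuinely different device: it first obtains the Bousfield localizing pair $({}^\perp(\Hplus\Y),\Hplus\Y)$ in $\Hplus\E$, then writes an arbitrary $X$ as the homotopy colimit of its brutal truncations $\sigma^{\ge -n}X$, takes the compatible tower of approximation triangles $\tau_n$ (compatibility being guaranteed by the uniqueness coming from the localizing pair), and uses the octahedral axiom on the ``$1-\mathrm{shift}$'' triangles of the three coproducts to produce the approximation triangle $F\to X\to Y\laplus$ with $F\in{}^\perp\Htp\Y$ and $Y\in\Htp\Y$; here closure of $\Y$ under coproducts is needed so that $\coprod_n Y_n$ and its cone stay in $\Htp\Y$. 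Without some such passage from the bounded-below case to the unbounded case, your Case~(2) is incomplete.
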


\begin{proof}
We have two exact structures on $\Cpx\E$, one defined by the class $\mathbf{E}$ of all componentwise split short exact sequences in $\E$, and the other defined by the class $\mathbf{E}'$ of sequences of complexes which are conflations in $\E$ in each component. We denote the corresponding exact categories by $\Cpx\E_\mathbf{E}$ and $\Cpx\E_{\mathbf{E}'}$, respectively. Then both $\Cpx\E_\mathbf{E}$ and $\Cpx\E_{\mathbf{E}'}$ are as in Setup~\ref{setup:nice exact category}; see Example~\ref{expl:semisplit structure}.

Assume first that $\Y = \clS^{\perp_1}$ in $\E$ for some set $\clS$. For each $X \in \E$, let $\hat X$ denote the complex concentrated in degrees $0$ and $1$
$$
\hat X: \qquad
\cdots \la 0 \la 0 \la X \overset{1_X}\la X \la 0 \la 0 \la \cdots,
$$
and denote by $\hat{\clS}$ the set of complexes of the form $\hat{X}[n]$, with $X\in\clS$ and $n\in\mathbb{Z}$. Now fix a strongly homological set $\I$ of inflations in $\E$ \st (up to isomorphism) $\Coker(\I)=\clS$, and consider the set $\hat{\I}$ of obvious chain maps $\hat{i}[n]: \hat{A}[n] \la \hat{B}[n]$, with $i:A \infl B$ in $\I$ and $n\in\mathbb{Z}$. It is routine to check that $\hat{\I}$ is strongly homological in $\Cpx\E_{\mathbf{E}'}$ and $\Coker(\hat{\I})=\hat{\clS}$. It follows from Theorem~\ref{thm:orthogonality of morphisms - cotorsion pairs}(4) used for $\C = \Cpx\E_{\mathbf{E}'}$ that $\hat{\clS}^{\perp_1}$ is preenveloping and we claim that $\hat{\clS}^{\perp_1}=\Cpx\Y$ in $\Cpx\E_{\mathbf{E}'}$. Indeed, the inclusion $\Cpx\Y \subseteq \hat{\clS}^{\perp_1}$ follows from the equalities and isomorphisms
$$
\Ext_{\Cpx\E_{\mathbf{E}'}}^1(\hat{X}[n],Y) = \Ext_{\Cpx\E_\mathbf{E}}^1(\hat{X}[n],Y) \cong \stHom_{\Cpx\E_\mathbf{E}}(\hat{X}[n-1],Y) = 0,
$$
for each $X \in \clS$ and $Y \in \Cpx\Y$. Conversely, the inclusion $\Cpx\Y \supseteq \hat{\clS}^{\perp_1}$ is implied by the following isomorphism for each $X \in \clS$ and $Y \in \Cpx\Y$, see~\cite[Lemma 3.1(5)]{G3}:
$$
\Ext_{\Cpx\E_{\mathbf{E}'}}^1(\hat{X}[-n],Y) \cong \Ext_\E^1(X,Y^n).
$$
The claim is proved and consequently $\Htp\Y$ is preenveloping in $\Htp\E$. It only remains to use the dual statement to Proposition~\ref{prop:Neeman's}.

Assume next that $\Y$ is preenveloping and closed under taking coproducts in $\E$. Using $\Y$-preenvelopes and pushouts, one shows by the same argument as for~\cite[Lemma I.4.6(i)]{Hart} that each $X \in \Cplus\E$ admits a $\Cplus\Y$-preenvelope concentrated in the same degrees as $X$. Hence the dual of Proposition~\ref{prop:Neeman's} tells us that  $\big({^\perp \big(\Hplus\Y}\big), \Hplus\Y\big)$ is a Bousfield localizing pair in $\Hplus\E$.

Let now $X\in \Cpx\E$ be arbitrary. Then $X$ can be expressed in $\Htp\E$ as the homotopy colimit (see~\cite[\S1.6]{Nee2} for details) of the chain $\sigma^{\geq 0} X \infl \sigma^{\geq -1} X \infl \cdots \infl \sigma^{\geq -n} X \infl \cdots$ of its brutal (also known as stupid) truncations.
%
%
By the previous paragraph, we have for each $n \ge 0$ a triangle
$$
\tau_n: \qquad
F_n \la \sigma^{\geq -n} X \la Y_n \laplus
$$
in $\Htp\E$ with $F_n \in {^\perp \Htp\Y}$ and $Y_n \in \Hplus\Y$, and the Bousfield localizing pair gives us a unique chain of maps of triangles $\tau_0 \la \tau_1 \la \cdots \la \tau_n \la \cdots$ extending the chain of truncations. By the octahedral axiom, we get a commutative diagram with triangles in rows and columns:
$$
\begin{CD}
\coprod_{n \ge 0} F_n @>>> \coprod_{n \ge 0} \sigma^{\geq -n} X @>>> \coprod_{n \ge 0} Y_n @>+>>    \\
@V{1-\mathrm{shift}}VV              @V{1-\mathrm{shift}}VV                        @VVV            \\
\coprod_{n \ge 0} F_n @>>> \coprod_{n \ge 0} \sigma^{\geq -n} X @>>> \coprod_{n \ge 0} Y_n @>+>>  \\
         @VVV                                @VVV                                 @VVV            \\
           F          @>>>                     X                @>>>                Y      @>+>>  \\
         @V+VV                               @V+VV                                @V+VV           \\
\end{CD}
$$
where clearly $F \in {^\perp \Htp\Y}$ and $Y \in \Htp\Y$. Hence
$({^\perp \Htp\Y}, \Htp\Y)$ is again a Bousfield localizing pair in
$\Htp\E$.

The final assertion is standard. Namely, $\Htp\E$ has coproducts by Lemma~\ref{lem:existence of exact coproducts} and a left adjoint functor takes coproducts to coproducts.
\end{proof}


Let us provide a few examples for the existence of a left adjoint functor.

\begin{exem} \label{expl:derived via injectives}
Let $\G$ be a Grothendieck category and $\I$ the class of all
injective objects in $\G$. We will see in Theorem~\ref{thm:model
structures on Grothendieck categories} that $(\tilde\G,\codg\I)$ is
a complete cotorsion pair in $\Cpx\G$ with the abelian exact
structure. In particular $\codg\I$ is preenveloping in $\Cpx\G$ and
$\underline{\codg\I} \infl \Htp\G$ has a left adjoint by
Proposition~\ref{prop:Neeman's}. It is well known that in this case
we have the Bousfield localizing pair
$(\tilde\G,\underline{\codg\I})$ and the composition
$\underline{\codg\I} \infl \Htp\G \overset{Q}\la \Der\G$ is a
triangle equivalence.
\end{exem}

\begin{exem} \label{expl:Inj}
Let $\G$ be a Grothendieck category and $\I$ the class of injective objects. Recall that there exists a set $\clS$ of objects of $\G$ \st $\I = \clS^{\perp_1}$. It follows from Proposition~\ref{prop:left adjoint} that the inclusion $\Htp\I \infl \Htp\G$ always has a left adjoint and that $\Htp\I$ has coproducts, even when $\G$ is not locally Noetherian.
\end{exem}

We end the section with another application of Proposition~\ref{prop:left adjoint}, which can be applied for instance to $\G = \Mod{R}$ and $\F = \Flat{R}$ for a left coherent ring $R$, or to $\G = \Qco\bbX$ and $\F = \Flat\bbX$ as in Proposition~\ref{prop:sheaves closed under products}.

\begin{cor} \label{cor:left adjoint deconstructible}
Let $\G$ be a Grothendieck category and $\F$ be a deconstructible class in $\G$. Then the following assertions are equivalent:
\begin{enumerate}
\item Every product of objects of $\F$ is a direct summand of an object of $\F$.
\item $\Htp\F$ is a localizing subcategory of $\Htp\G$ and the inclusion functor $\Htp\F \infl \Htp\G$ has a left (and a right) adjoint.
\end{enumerate}
\end{cor}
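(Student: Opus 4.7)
My plan is to deduce this corollary from two earlier results: Proposition~\ref{prop:adjoints in the homotopy category of a Grothendieck category}, which already produces the right adjoint to $\Htp\F \infl \Htp\G$ for any deconstructible $\F$ in $\G$, and Proposition~\ref{prop:left adjoint}, which produces the left adjoint under a preenveloping hypothesis on $\F$. The key extra input needed under hypothesis~(1) is that $\F$ is preenveloping in $\G$; the converse (2)$\implies$(1) will then follow by a direct chain-level calculation exploiting that $\prod_j F_j$ lives in degree zero.

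For (1)$\implies$(2), write $\F=\Filt\clS$ for a set $\clS$. The right adjoint is free from Proposition~\ref{prop:adjoints in the homotopy category of a Grothendieck category}(1). For the left adjoint I would verify condition~(2) of Proposition~\ref{prop:left adjoint} with $\E=\G$ (which is efficient by Example~\ref{expl:efficient exact categories}(1)) and $\Y=\F$: namely, that $\F$ is preenveloping in $\G$ and closed under coproducts. Coproduct-closure is already Corollary~\ref{cor:filtrations closed under transf ext}. For preenveloping, I would invoke a Hill-lemma-type result for deconstructible classes (cf.~\cite{St2}) to obtain a cardinal $\mu$, depending only on $M$ and $\clS$, such that every morphism $M \to F$ with $F \in \F$ factors through some subobject $F' \subseteq F$ with $F' \in \F$ and $|F'| \leq \mu$. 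Letting $\mathcal{T}$ be a set of representatives of $\F$-objects of cardinality at most $\mu$, I would form the set-indexed product $P=\prod_{T \in \mathcal{T}} T^{\Hom_\G(M,T)}$ in $\G$ with canonical morphism $M \to P$. Hypothesis~(1) makes $P$ a split subobject of some $\bar F \in \F$, and a routine check confirms that $M \to P \hookrightarrow \bar F$ is then an $\F$-preenvelope. Proposition~\ref{prop:left adjoint} now yields the Bousfield localizing pair $({^{\perp}\Htp\F},\Htp\F)$ in $\Htp\G$, giving both the left adjoint and the localizing property (note that $\Htp\F$ is automatically closed under shifts, coproducts, and mapping cones of its morphisms, since $\F$ is closed under coproducts and finite biproducts in $\G$).

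For (2)$\implies$(1), assume both adjoints $L \dashv i \dashv R$ for $i: \Htp\F \infl \Htp\G$ exist and take a family $(F_j)_{j\in J}$ in $\F$, viewed as complexes concentrated in degree~$0$. Products in $\Htp\G$ are computed componentwise, so $P := \prod_j F_j$ in $\Htp\G$ is the complex concentrated in degree~$0$ whose entry is the ordinary $\G$-product $\prod_\G F_j$. By adjunction, $R(P)$ is the product of $(F_j)$ in $\Htp\F$, while the existence of $L$ makes $i$ a right adjoint and hence limit-preserving, so $i(R(P)) \cong P$ in $\Htp\G$. Choosing a representative $Q \in \Cpx\F$ of $R(P)$, one obtains a homotopy equivalence $Q \simeq P$ in $\Htp\G$ given by chain maps $f: P \to Q$ and $g: Q \to P$ with $gf$ chain-homotopic to $1_P$. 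Because $P$ is concentrated in degree~$0$, every chain self-homotopy of $P$ is zero, so in fact $gf = 1_P$ on the nose. Therefore $f^0: \prod_\G F_j \to Q^0$ is a split monomorphism with retraction $g^0$, exhibiting $\prod_\G F_j$ as a direct summand of $Q^0 \in \F$.

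The principal obstacle is the preenveloping step in (1)$\implies$(2): without the Hill-lemma bound one has no way to collect the needed morphisms into a single set and assemble them into a preenvelope, and it is precisely at this point that hypothesis~(1) is invoked in order to replace the potentially $\F$-foreign product by a genuine object of $\F$. The remainder of the argument is bookkeeping with the two adjunctions together with the observation that complexes concentrated in a single degree admit no nontrivial chain self-homotopies.
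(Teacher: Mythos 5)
Your proposal is correct, and the main direction (1)$\implies$(2) coincides with the paper's argument: the same bound from \cite[Corollary 2.7]{St2} to collect a set of potential targets, the same product-indexed-by-Hom construction of an $\F$-preenvelope (with hypothesis~(1) used exactly where you use it, to replace the product by an honest object of $\F$), and then Proposition~\ref{prop:left adjoint}(2) together with Proposition~\ref{prop:adjoints in the homotopy category of a Grothendieck category} for the two adjoints. For (2)$\implies$(1) you take a slightly different, equally valid route: the paper observes that the degree-zero component of the unit $X \la iL(X)$ of the \emph{left} adjunction is an $\F$-preenvelope in $\G$ and that a preenvelope of a product of objects of $\F$ is automatically a section, whereas you apply the \emph{right} adjoint $R$ to the product $P$ (using $L$ only to see that $i$ preserves products, hence $iR(P)\cong P$) and then exploit that a stalk complex admits no nontrivial self-homotopies to upgrade the homotopy equivalence to a genuine split monomorphism in degree zero. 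Both variants rest on the same degree-zero rigidity; yours uses both adjoints but avoids the intermediate notion of preenvelope, while the paper's extracts the stronger intermediate fact that $\F$ is preenveloping in $\G$ from the left adjoint alone.
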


\begin{proof}
(2) $\implies$ (1) If $X \in \G$ and $\eta_X: X \la Y$ with $Y \in \Htp\F$ is the map coming from the unit of the adjunction, then induced morphism $\eta_X^0: X \la Y^0$ in degree zero is an $\F$-preenvelope in $\G$. Indeed, if $f': X \la F$ is any morphism in $\G$ with $F \in \F$, then the adjunction property gives in $\Htp\G$ a unique morphism $g: Y \la F$ \st $f' = g \circ \eta_X$. It follows that $f' = g^0 \circ \eta_X^0$ in $\G$ and, hence, $\eta_X^0$ is an $\F$-preenvelope. Therefore, $\F$ is preenveloping in $\G$. Now notice that an  $\F$-preenvelope of a product of objects of $\F$ must be a section, so (1) follows.

(1) $\implies$ (2) By~\cite[Corollary 2.7]{St2}, there is a regular cardinal $\kappa = \kappa(\G)$ with the following property: For each morphism $f: X \la F$ in $\G$ with $F \in \F$ and $X$ $\lambda$-presentable for a regular cardinal $\lambda$, there is a $\max(\kappa,\lambda)$-presentable subobject $F' \subseteq F$ \st $F' \in \F$ and $\Im f \subseteq F'$. It follows that if we fix $X \in \G$, there is a set $\clS \subseteq \F$ \st any morphism $f: X\longrightarrow F$ with $F \in \F$ factors through an inclusion $F' \subseteq F$ for some $F' \in \clS$. By our assumption there is $F'' \in \G$ \st $\prod_{F \in \clS} F^{\Hom_\G(X,F)} \oplus F'' \in \F$, and it follows that the obvious morphism
$$
h: X \la \prod_{F \in \clS} F^{\Hom_\G(X,F)} \oplus F''
$$
is an $\F$-preenvelope of $X$. Since $\F$ is also closed under coproducts by Corollary~\ref{cor:filtrations closed under transf ext}, Proposition~\ref{prop:left adjoint} implies that $\Htp\F \infl \Htp\G$ has a left adjoint. Moreover, $\Htp\F \infl \Htp\G$ has a right adjoint by Proposition~\ref{prop:adjoints in the homotopy category of a Grothendieck category}.
\end{proof}

\subsection{Constructing model structures for Grothendieck categories}
\label{subsec:constructing monoidal model structures}

Another area where cotorsion pairs and approximations are useful, is the construction of model category structures on abelian categories, \cite{H1,H3}. The particular case of recent interest was the construction of the derived category of the category $\Qco\bbX$ of quasi-coherent sheaves with the tensor product $\otimes$. The main incentive of Gillespie in his work~\cite{G3,G2,G1} was to give a clean and general framework to deal with the derived functor of $\otimes$. Generalizing and streamlining arguments from the subsequent work of Estrada, Guil, Prest and Trlifaj~\cite{EGPT} we have the following main statement, where we use Notation~\ref{not:tilde F and dg-F}:

\begin{theor} \label{thm:model structures on Grothendieck categories}
Let $\G$ be a Grothendieck category and $\F \subseteq\ \G$ be \st
\begin{enumerate}
 \item $\F$ is deconstructible in $\G$ with the abelian exact structure,
 \item $\F$ is closed under taking kernels of epimorphisms and summands in $\G$, and
 \item $\F$ contains a generator of $\G$.
\end{enumerate}
If we put $\T = \F^{\perp_1}$ in $\G$, then $(\dg\F,\tilde\T)$ and $(\tilde\F,\codg\T)$ are complete cotorsion pairs in $\Cpx\G$ with the abelian exact structure. Moreover, there is a model category structure (in the sense of~\cite{H2}) on $\Cpx\G$ \st
\begin{itemize}
 \item Cofibrations (resp. trivial cofibrations) are precisely monomorphisms whose cokernels are in $\dg\F$ (resp. $\tilde\F$).
 \item Fibrations (resp. trivial fibrations) are precisely epimorphisms whose kernels are in $\codg\T$ (resp. $\tilde\T$).
 \item Weak equivalences are precisely quasi-isomorphisms.
\end{itemize}
In particular, the homotopy category of $\Cpx\G$ is precisely the derived category $\Der\G$.
\end{theor}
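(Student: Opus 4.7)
The plan is to establish the two cotorsion pairs first and then extract the model structure via the Hovey correspondence. For the two cotorsion pairs, the strategy is to exhibit $\tilde\F$ and $\dg\F$ as deconstructible classes in $\Cpx\G$ equipped with the abelian exact structure, and then apply Corollary~\ref{cor:consequences of Theorem}(2). Since $\G$ is a Grothendieck category, so is $\Cpx\G$, hence $\Cpx\G$ is efficient in the sense of Definition~\ref{def:efficient exact category}. By~\cite[Theorem 4.2]{St2}, deconstructibility of $\F$ in $\G$ propagates to deconstructibility of $\tilde\F$ and $\dg\F$ in $\Cpx\G$; say $\tilde\F=\Filt\clS_1$ and $\dg\F=\Filt\clS_2$ for suitable sets. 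Moreover, since $\F$ contains a generator of $\G$, the associated disc and sphere complexes supply generators of $\Cpx\G$ lying in both $\tilde\F$ and $\dg\F$. Corollary~\ref{cor:consequences of Theorem}(2) then yields complete cotorsion pairs $(\tilde\F,\tilde\F^{\perp_1})$ and $(\dg\F,(\dg\F)^{\perp_1})$ in $\Cpx\G$, with the left-hand classes coinciding with summands of the corresponding filtered objects.

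The second step is to identify the right-hand classes as $\codg\T$ and $\tilde\T$ respectively. The key input is that $(\F,\T)$ is a hereditary cotorsion pair in $\G$: closure of $\F$ under kernels of epimorphisms combined with a standard dimension-shift gives $\Ext_\G^{n}(F,T)=0$ for all $n\ge 1$, $F\in\F$, $T\in\T$. Using this one computes $\Ext^1_{\Cpx\G}$ against the generators of $\tilde\F$ (resp.\ $\dg\F$) by means of mapping cones, showing that $Y\in\tilde\F^{\perp_1}$ forces each $Y^n\in\T$ and every chain map $\tilde\F\to Y$ to be null-homotopic, which is exactly the definition of $\codg\T$; dually $\dg\F^{\perp_1}=\tilde\T$. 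These are Gillespie's compatibility identities, spelled out in~\cite{G1,G2}; they go through verbatim at the present level of generality because we have only used that $(\F,\T)$ is hereditary.

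With the two complete cotorsion pairs in hand, the plan is to apply Hovey's correspondence~\cite[Theorem 2.2]{H1} between abelian model structures and \emph{Hovey triples} $(\C_{\mathrm{cof}},\W,\F_{\mathrm{fib}})$ consisting of a thick class $\W$ and two complete cotorsion pairs $(\C_{\mathrm{cof}}\cap\W,\F_{\mathrm{fib}})$ and $(\C_{\mathrm{cof}},\W\cap\F_{\mathrm{fib}})$. We take $\C_{\mathrm{cof}}=\dg\F$, $\F_{\mathrm{fib}}=\codg\T$, and let $\W$ be the class of acyclic complexes; $\W$ is well known to be thick in $\Cpx\G$. What remains is the pair of equalities $\dg\F\cap\W=\tilde\F$ and $\codg\T\cap\W=\tilde\T$, which follow by a direct ``relative (co)resolution'' argument inspired by~\cite{G1}: an acyclic $X\in\dg\F$ has cycles in $\F$ by hereditariness, hence $X\in\tilde\F$, and conversely $\tilde\F\subseteq\dg\F\cap\W$ is immediate; the dual argument handles the other equality. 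Hovey's theorem then produces exactly the advertised model structure, with the (trivial) cofibrations and (trivial) fibrations as stated, and weak equivalences equal to $\W\cdot\text{Iso}\cdot\W$ which, since $\W$ consists of the acyclics, is the class of quasi-isomorphisms. The homotopy category is therefore $\Cpx\G[\mathrm{qis}^{-1}]=\Der\G$.

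The main obstacle I anticipate is the second paragraph, namely pinning down the identifications $\tilde\F^{\perp_1}=\codg\T$ and $\dg\F^{\perp_1}=\tilde\T$. These are not automatic from deconstructibility alone; they require the hereditary hypothesis and a careful bookkeeping with extensions in $\Cpx\G$ through discs, spheres, and mapping cones. The rest of the argument is essentially a matter of assembling the small object argument machinery from Section~\ref{sec:small_obj}, the deconstructibility result~\cite[Theorem 4.2]{St2}, and Hovey's correspondence, none of which poses substantive new difficulties in the present setting.
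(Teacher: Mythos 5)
Your route is the same as the paper's: deconstructibility of $\tilde\F$ and $\dg\F$ in $\Cpx\G$ via \cite[Theorem 4.2]{St2}, Gillespie's identification of the induced cotorsion pairs, completeness via Corollary~\ref{cor:consequences of Theorem}(2), and then Hovey's correspondence \cite[Theorem 2.2]{H1} fed with the compatibility identities $\dg\F\cap\W=\tilde\F$ and $\codg\T\cap\W=\tilde\T$. Two points need attention. First, a misattribution of where hereditariness is used: the identifications $\tilde\F^{\perp_1}=\codg\T$ and $(\dg\F)^{\perp_1}=\tilde\T$ do \emph{not} require $(\F,\T)$ to be hereditary --- they hold for any cotorsion pair in a Grothendieck category whose left class is generating, which is exactly \cite[Proposition 3.6]{G3} as invoked in the paper; hereditariness enters only in the compatibility step. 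Second, and this is the genuine gap, your justification of the hard inclusion $\dg\F\cap\W\subseteq\tilde\F$ --- ``an acyclic $X\in\dg\F$ has cycles in $\F$ by hereditariness'' --- fails as stated: for an \emph{unbounded} acyclic complex, closure of $\F$ under kernels of epimorphisms gives no place to start the induction on cycles (acyclic complexes of projective modules with non-projective syzygies are the standard counterexample). The correct argument, which is what \cite[Corollary 3.9]{G1} supplies and what the paper relies on, uses the already-established \emph{completeness} of $(\tilde\F,\codg\T)$: take a special approximation $0\to W\to \tilde X\to X\to 0$ with $\tilde X\in\tilde\F$ and $W\in\codg\T$, deduce $W\in\tilde\T$ using that $\T$ is closed under cokernels of monomorphisms (this is where Lemma~\ref{lem:hereditary cotorsion pairs in Grothendieck categories} converts hypothesis (2) into the form of hereditariness actually consumed), and then split the sequence via $\Ext^1_{\Cpx\G}(X,W)=0$ to exhibit $X$ as a summand of $\tilde X$. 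If you defer to \cite{G1} for this step your proof is complete, but the parenthetical reasoning you offer in its place is not a valid substitute.
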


\begin{rem} \label{rem:explanation for module structures}
Note that Theorem~\ref{thm:model structures on Grothendieck categories} generalizes both~\cite[Theorem 4.12]{G1} (since any deconstructible class is Kaplansky due to~\cite[Corollary 2.7]{St2}) and the first paragraph of~\cite[Theorem 4.4]{EGPT}. As an illustration, for any Grothendieck category one can put $\F = \G$, which results in the construction of $\Der\G$ using injectives.
If $\bbX$ is a quasi-compact and separated scheme, we can put $\G = \Qco\bbX$ and $\F = \Flat\bbX$ by~\cite[Corollary 3.22]{Mur}. Using~\cite[Lemma 2.1.3]{TT}, the theorem applies to $\G = \Qco\bbX$ and $\F = \Vect\bbX$ if $\bbX$ is a quasi-projective variety.
\end{rem}

\begin{rem} \label{rem:explanation for monoidal module structures}
The main motivation for seeking alternatives to injectives in the construction of $\Der{\Qco\bbX}$ is that we wish the resulting model structure to be compatible with the tensor product of complexes of sheaves. For this we need cofibrant replacements to be complexes of flat sheaves. We refer to~\cite{EGPT} for details.
\end{rem}

The proof will be heavily based on Gillespie's ideas and follows the outline in~\cite[\S7.2]{H3} with replacements made only in a few necessary spots. One such place involves resolving Hovey's comment on a generalization of hereditary cotorsion pairs for Grothendieck categories, see~\cite[\S7.2, p. 292]{H3}.


\begin{lemma} \label{lem:hereditary cotorsion pairs in Grothendieck categories}
Let $\G$ be a Grothendieck category and $(\F,\T)$ a cotorsion pair
in $\G$ \st $\F$ contains a generator. Then the following assertions
are equivalent:
\begin{enumerate}
 \item $\Ext_\G^n(F,T) = 0$ for each $F \in \F$, $T \in \T$ and $n \ge 1$.
 \item $\Ext_\G^2(F,T) = 0$ for each $F \in \F$ and $T \in \T$.
 \item $\F$ is closed under taking kernels of epimorphisms.
 \item $\T$ is closed under taking cokernels of monomorphisms.
\end{enumerate}
\end{lemma}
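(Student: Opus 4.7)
The plan is to prove the chain of implications $(1) \Rightarrow (2) \Rightarrow (3) \Rightarrow (2) \Rightarrow (4) \Rightarrow (1)$, which yields the full set of equivalences. The implication $(1) \Rightarrow (2)$ is immediate. For $(2) \Rightarrow (3)$, I would apply the long exact sequence of $\Ext$ to a conflation $0 \to F' \to F \to F'' \to 0$ with $F, F'' \in \F$: the excerpt $\Ext_\G^1(F, T) \to \Ext_\G^1(F', T) \to \Ext_\G^2(F'', T)$ has zero outer terms by the cotorsion pair and~(2), forcing $F' \in \F$. Dually, for $(2) \Rightarrow (4)$, given $0 \to T' \to T \to T'' \to 0$ with $T', T \in \T$ and any $F \in \F$, the LES of $\Ext_\G(F, -)$ contains $\Ext_\G^1(F, T) \to \Ext_\G^1(F, T'') \to \Ext_\G^2(F, T')$ with zero outer terms.

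For $(4) \Rightarrow (1)$, I would fix $T \in \T$, choose an injective coresolution $0 \to T \to I^0 \to I^1 \to \cdots$ (available since $\G$ is Grothendieck), and let $Z^n$ denote the $n$-th cosyzygy. Since every injective belongs to $\T$, assumption~(4) applied inductively to the conflations $0 \to Z^{n-1} \to I^{n-1} \to Z^n \to 0$ yields $Z^n \in \T$ for every $n \ge 0$. Dimension shifting through these injectives then gives $\Ext_\G^n(F, T) \cong \Ext_\G^1(F, Z^{n-1}) = 0$ for all $F \in \F$ and $n \ge 1$.

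The main obstacle is $(3) \Rightarrow (2)$, since the naive dimension-shifting arguments are circular in the absence of projectives in $\G$. My approach is to work with the Yoneda description of $\Ext^2$ directly. Given a 2-extension $\xi \colon 0 \to T \to X \to Y \to F \to 0$ with $F \in \F$ and $T \in \T$, put $K = \ker(Y \to F)$. Since $\F$ contains a generator and is closed under coproducts, there is an epimorphism $\pi \colon G \twoheadrightarrow Y$ with $G \in \F$ a coproduct of the generator. The composite $G \twoheadrightarrow F$ then sits in a conflation $0 \to K' \to G \to F \to 0$, and assumption~(3) forces $K' \in \F$. Forming the pullback $\tilde X = X \times_K K'$ produces a 2-extension $\tilde\xi \colon 0 \to T \to \tilde X \to G \to F \to 0$ (with middle map factoring through $K' \hookrightarrow G$), together with evident morphisms $\tilde X \to X$ and $G \to Y$ that assemble into a chain map $\tilde\xi \to \xi$ extending the identities on $T$ and $F$; hence $[\tilde\xi] = [\xi]$ in $\Ext_\G^2(F, T)$. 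But $\tilde\xi$ is the Yoneda splice of $0 \to T \to \tilde X \to K' \to 0$, which lies in $\Ext_\G^1(K', T) = 0$ since $K' \in \F$, with $0 \to K' \to G \to F \to 0$; as the first factor already splits, the splice vanishes in $\Ext_\G^2(F, T)$, giving $[\xi] = 0$.
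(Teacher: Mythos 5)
Your proof is correct, but it takes a genuinely different route through the hard implication. The paper proves (3) $\implies$ (1) directly for all $n\ge 2$: given an $n$-extension $0\to T\la E_1\la\cdots\la E_n\la F\to 0$, it resolves the entire middle segment by a complex with entries in $\F$ (using the dual of the Cartan--Eilenberg-type resolution lemma and the fact that $\F$ is generating), notes via (3) that the intermediate images lie in $\F$, and concludes that the leftmost short exact piece splits; the implication (4) $\implies$ (1) is then obtained as the formal dual, using that $\T$ contains an injective cogenerator. You instead isolate only (3) $\implies$ (2) and handle it by an explicit one-step construction---covering $Y$ by a coproduct $G$ of the generator, pulling back along $K'=\Ker(G\to F)\to K$ to produce an equivalent $2$-extension that splices through $K'\in\F$, and killing it with $\Ext^1_\G(K',T)=0$---and then recover the vanishing for all $n$ through the chain (2) $\implies$ (4) $\implies$ (1) via injective coresolutions and dimension shifting. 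Your version is more elementary and self-contained for $n=2$ (it is essentially the $n=2$ case of the paper's resolution, made explicit), at the cost of leaning on enough injectives for the passage to general $n$; the paper's version keeps (3) $\implies$ (1) and (4) $\implies$ (1) as exact duals. Two minor points you assert without proof but which are standard and consistent with the paper's toolkit: that $\F={}^{\perp_1}\T$ is closed under coproducts (compute $\Ext^1_\G(\coprod F_i,T)\cong\prod\Ext^1_\G(F_i,T)$ with an injective resolution of $T$), and that a single morphism of $2$-extensions restricting to the identity on $T$ and $F$ forces equality of Yoneda classes.
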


\begin{proof}
The implications (1) $\implies$ (2) $\implies$ (3) and (2) $\implies$ (4) are easy. Let us focus on (3) $\implies$ (1). Assume that $n \ge 2$ and the exact sequence
$$ \varepsilon: \qquad 0 \to T \la E_1 \la E_2 \la \dots \la E_n \la F \to 0 $$
represents an element of $\Ext_\G^n(F,T)$. Using the dual version of~\cite[Lemma I.4.6(i)]{Hart}, one sees that there is a quasi-isomorphism from a complex of the form
$$
\dots \la F^0 \overset{\partial^0}\la
F^1 \overset{\partial^1}\la
F^2 \overset{\partial^2}\la \dots
\overset{\partial^{n-1}}\la F^n \la
0 \la 0 \la \dots
$$
with all $F_i \in \F$ to the complex $(E_1 \la E_2 \la \dots \la E_n)$ concentrated in degrees $1$ to $n$. One easily obtains the commutative diagram with exact rows:
$$
\xymatrix{ \varepsilon: & 0 \ar[r] & T \ar[r] & E_1 \ar[r] & E_2
\ar[r] & \dots \ar[r] & E_n \ar[r] & F \ar[r] & 0\phantom{,}
\\
\eta: & 0 \ar[r] & T \ar[r] \ar@{=}[u] & \Coker \partial^0 \ar[r]
\ar[u] & F_2 \ar[r]^{\partial^2} \ar[u] & \dots
\ar[r]^{\partial^{n-1}} & F_n \ar[r] \ar[u] & F \ar[r]  \ar@{=}[u] &
0, }
$$
from which it follows that $\varepsilon$ and $\eta$ represent the same element in $\Ext_\G^n(F,T)$. Moreover, assuming (3) we can easily prove by induction that $\Im \partial^i \in \F$ for all $1 \le i \le n-1$. Therefore, $0 \to T \la \Coker \partial^0 \la \Im \partial^1 \to 0$ splits and both $\varepsilon$ and $\eta$ represent zero in the $\Ext$ group. Finally, the proof of (4) $\implies$ (1), where we employ the fact that $\T$ always contains an injective cogenerator, is dual.
\end{proof}

We are in a position to give a proof of Theorem~\ref{thm:model structures on Grothendieck categories} now.

\begin{proof}[Proof of Theorem~\ref{thm:model structures on Grothendieck categories}]
Let $\F \subseteq \G$ be a class as in the statement and $\T = \F^{\perp_1}$ in $\G$. In particular, there is a generating set $\clS$ \st $\F = \Filt\clS$ in the sense of Definition~\ref{def:filtrations}, and $\T = \clS^{\perp_1}$ by Theorem~\ref{thm:orthogonality of morphisms - cotorsion pairs}(3). It follows from Corollary~\ref{cor:consequences of Theorem}(2) that $(\F,\T)$ is a complete cotorsion pair in $\G$. Invoking~\cite[Proposition 3.6]{G3}, we readily infer that $(\dg\F,\tilde\T)$ and $(\tilde\F,\codg\T)$ are cotorsion pairs in $\Cpx\G$ with the abelian exact structure. Here we use the fact that $\Cpx\G$ is a Grothendieck category, and that both $\dg\F$ and $\tilde\F$ are generating classes of $\Cpx\G$. More precisely, if $F \in \F$ is a generator of $\G$, then the complexes
$$
\dots \la 0 \la 0 \la F \overset{1_F}\la F \la 0 \la 0 \la \dots
$$
form a set of generators of $\Cpx\G$ which is both in $\tilde\F$ and $\dg\F$.  Assuming the deconstructibility of $\F$ in $\G$, both $\dg\F$ and $\tilde\F$ are deconstructible in $\Cpx\G$ by~\cite[Theorem 4.2]{St2}. Using Theorem~\ref{thm:orthogonality of morphisms - cotorsion pairs}(3) and Corollary~\ref{cor:consequences of Theorem}(2) again, it follows that $(\dg\F,\tilde\T)$ and $(\tilde\F,\codg\T)$ are complete.

Finally, the existence of the model structure follows directly from~\cite[Theorem 2.2]{H1}, provided we can prove the following two equalities:
$$
\dg\F \cap \W = \tilde\F \qquad \textrm{and} \qquad \codg\T \cap \W
= \tilde\T,
$$
where $\W \subseteq \Cpx\G$ is the class of all acyclic complexes. With help of Lemma~\ref{lem:hereditary cotorsion pairs in Grothendieck categories}, which tells us that $\T$ is closed under cokernels of monomorphisms, the equalities follow from~\cite[Corollary 3.9]{G1}. In the argument there, one just has to keep in mind that we already know that $(\dg\F,\tilde\T)$ is a complete cotorsion pair in $\Cpx\G$.
\end{proof}

\appendix
\section{More on exact categories}
\label{sec:appendix exact catg}

Throughout this appendix, let $\C$ be an exact category in the sense of \cite{Q,Kst}. We shall list and prove a few statements that we have used earlier in the paper. We will prove them from the axioms, since we work with exact categories which are typically not small, so it is not formally correct to use the embedding theorem from~\cite[Appendix A]{Kst} or~\cite[A.7.1 and A.7.16]{TT}. All the statements also have dual versions, the formulation of which is left to the reader. Let us start with a simple fact:

\begin{lemma} \label{lem:adding a deflation}
Let $p:X\la Z$ and $f:Y\la Z$ be morphisms in $\C$, where $p$ is a deflation. The induced morphism $\begin{pmatrix}p & f\end{pmatrix}: X\oplus Y\la Z$ is a deflation.
\end{lemma}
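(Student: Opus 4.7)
The plan is to factor the morphism $\begin{pmatrix}p & f\end{pmatrix}: X\oplus Y \la Z$ as a composition of two deflations, and then invoke the axiom that deflations are closed under composition. Explicitly, I would write
$$
X\oplus Y \xrightarrow{\; p \oplus 1_Y \;} Z \oplus Y \xrightarrow{\; \begin{pmatrix}1_Z & f\end{pmatrix} \;} Z,
$$
and check by a direct matrix computation that the composite agrees with $\begin{pmatrix}p & f\end{pmatrix}$.

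For the right-hand map, the column $\begin{pmatrix}1_Z \\ 0\end{pmatrix}: Z \la Z\oplus Y$ is a section of $\begin{pmatrix}1_Z & f\end{pmatrix}$, so the latter is a split epimorphism and hence a deflation by the basic axioms of an exact category.

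For the left-hand map $p\oplus 1_Y$, I would exhibit the commutative square
$$
\xymatrix{
X\oplus Y \ar[r]^{\pi_X} \ar[d]_{p \oplus 1_Y} \xypullback & X \ar[d]^{p} \\
Z\oplus Y \ar[r]_{\pi_Z} & Z
}
$$
as a pullback, via the universal property of the direct product $Z\oplus Y$: a pair of morphisms $W\la X$ and $W\la Z\oplus Y$ which become equal after postcomposition with $p$ and $\pi_Z$ respectively is the same datum as a pair of morphisms $W\la X$ and $W\la Y$, that is, a morphism $W\la X\oplus Y$. The axiom that pullbacks of deflations along arbitrary morphisms exist and are again deflations then shows that $p\oplus 1_Y$ is a deflation, and composing the two deflations yields the statement.

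There is essentially no obstacle here; the argument relies only on the foundational axioms (split epimorphisms are deflations, pullbacks of deflations are deflations, deflations compose), and on a routine identification of the above square as a pullback.
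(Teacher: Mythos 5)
Your argument is correct in substance but takes a genuinely different route from the paper. The paper forms the pullback of $p$ along $f$ itself and quotes the dual of B\"uhler's Proposition~2.12, one part of which says that for such a pullback $W$ the sequence $W \la X\oplus Y \la Z$, with second map $\begin{pmatrix}p & -f\end{pmatrix}$, is a conflation; the lemma follows in one line. You instead factor $\begin{pmatrix}p & f\end{pmatrix}$ through $Z\oplus Y$, pulling $p$ back along the projection $\pi_Z$ rather than along $f$. Your identification of $X\oplus Y$ (with $\pi_X$ and $p\oplus 1_Y$) as that pullback is correct, so $p\oplus 1_Y$ is a deflation by the axiom on pullbacks of deflations, and your route has the advantage of being self-contained: it uses only the axioms and the closure of deflations under composition, with no external citation.

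One step needs repair, though. The claim that $\begin{pmatrix}1_Z & f\end{pmatrix}$ is a deflation \emph{because} it is a split epimorphism is not an axiom and is false in a general exact category: a retraction need not be a deflation (it need not even have a kernel). This implication holds only under weak idempotent completeness (B\"uhler, Corollary~7.5), and the present lemma is stated for an arbitrary exact category --- the hypothesis that every section has a cokernel is only introduced afterwards, in Lemma~\ref{lem:(p,pu) deflation implies f deflation}. Fortunately your particular retraction is unproblematic: one has $\begin{pmatrix}1_Z & f\end{pmatrix} = \begin{pmatrix}1_Z & 0\end{pmatrix}\circ\smallpmatrix{1_Z & f \\ 0 & 1_Y}$, i.e.\ it is the canonical projection (a deflation, since split exact sequences are conflations) precomposed with an automorphism, hence a deflation. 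With that justification substituted for the appeal to split epimorphisms, the proof is complete.
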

\begin{proof}
Since the pullback of $p$ and $f$ exists by the axioms, the result is a consequence of the dual of \cite[Proposition 2.12]{Bu}.
\end{proof}

A more interesting problem is to determine when a converse statement holds, that is, when $\begin{pmatrix}p & f\end{pmatrix}$ being a deflation
implies that $p$ is a deflation. To do so, we have to be careful regarding existence of direct summands. Recall that given morphisms $r: X \la Y$ and $s: Y \la X$ \st $rs = 1_Y$, we call $s$ a section and $r$ a retraction. In the sequel, we will need the condition that every section has a cokernel or, equivalently by~\cite[Lemma 7.1]{Bu}, every retraction has a kernel. Note that in such a case, every section is an inflation and every
retraction is a deflation by~\cite[Corollary 7.5]{Bu}.

\begin{lemma} \label{lem:(p,pu) deflation implies f deflation}
Suppose that every section in $\C$ has a cokernel. Let $p:X\la Z$ and $u:Y\la X$ be any morphisms. The morphism $\begin{pmatrix}p & pu\end{pmatrix}:X\oplus Y\la Z$ is a deflation \iff $p$ is a deflation.
\end{lemma}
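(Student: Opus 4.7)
The ``if'' direction will follow immediately from Lemma~\ref{lem:adding a deflation} applied with $f = pu$, since the induced morphism $\begin{pmatrix}p & f\end{pmatrix}$ is then literally the morphism in the statement.

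For the converse, the plan is to factor $\begin{pmatrix}p & pu\end{pmatrix}$ as a composition $p \circ r$ where $r$ is a split deflation, and then to peel off $r$ using the obscure axiom. Specifically, I will set $r = \begin{pmatrix}1_X & u\end{pmatrix}: X \oplus Y \la X$ and observe, by direct matrix multiplication, that $p\circ r = \begin{pmatrix}p & pu\end{pmatrix}$. Denoting by $\iota_X: X \la X\oplus Y$ the canonical inclusion, one has $r\circ \iota_X = 1_X$, so $r$ is a retraction with $\iota_X$ as a section.

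The hypothesis that every section in $\C$ has a cokernel is equivalent, via~\cite[Lemma 7.1]{Bu}, to every retraction having a kernel, i.e., $\C$ is \emph{weakly idempotent complete}. Under this assumption, \cite[Corollary 7.5]{Bu} tells us that the retraction $r$ is in fact a deflation.

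The decisive step, and the main potential obstacle, is invoking the ``obscure axiom'' in the form valid for weakly idempotent complete exact categories~\cite[Proposition 7.6]{Bu}: if a composition $g\circ f$ is a deflation, then $g$ is a deflation. Applying this with $f = r$ and $g = p$ to the deflation $p\circ r = \begin{pmatrix}p & pu\end{pmatrix}$ will give that $p$ is a deflation. Without weak idempotent completeness the argument collapses precisely at this point, which explains why the hypothesis on sections having cokernels is indispensable in the statement.
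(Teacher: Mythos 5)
Your proof is correct, but it takes a genuinely different route from the paper's. You factor $\begin{pmatrix}p & pu\end{pmatrix}$ as $p\circ r$ with $r=\begin{pmatrix}1_X & u\end{pmatrix}$ a retraction and then invoke the ``obscure axiom'' for weakly idempotent complete exact categories (\cite[Proposition 7.6]{Bu}: if $g\circ f$ is a deflation then so is $g$), which immediately yields the claim; note that your intermediate observation that $r$ is itself a deflation is not actually needed for that application. The paper instead gives a self-contained argument from the axioms: it first reduces to the case $u=0$ by composing with the isomorphism $\smallpmatrix{1_X & u \\ 0 & 1_Y}$, and then shows directly that if $\begin{pmatrix}p & 0\end{pmatrix}$ is a deflation with kernel $K$, then $Y$ splits off of $K$ via the induced section $i:Y\la K$, after which a pushout along the projection $C\oplus Y\defl C$ exhibits $p$ as a deflation. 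The two arguments are essentially equivalent in content --- indeed, combined with Lemma~\ref{lem:adding a deflation}, the present lemma \emph{implies} \cite[Proposition 7.6]{Bu}, so the paper is in effect reproving that result from scratch, consistent with the appendix's stated aim of deriving everything directly from the axioms rather than from external structural results; your version buys brevity at the cost of that self-containment, but since Bühler's Proposition 7.6 is itself proved elementarily (not via an embedding theorem), there is no circularity and the citation is legitimate.
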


\begin{proof}
The ``if'' part follows from Lemma \ref{lem:adding a deflation}. Conversely, suppose that
$\begin{pmatrix}p & pu\end{pmatrix}:X\oplus Y\defl Z$ is a deflation. First we reduce the problem to the case $u=0$.
For this, note that the endomorphism of $X\oplus Y$ given by
$$
\smallpmatrix{
1_X & u   \\
0   & 1_Y
}: X\oplus Y\la X\oplus Y
$$
is an isomorphism, and $\begin{pmatrix} p & pu\end{pmatrix} =
\begin{pmatrix}p & 0\end{pmatrix} \circ \smallpmatrix{1_X & u \\ 0 &
1_Y}$.

Therefore, we are left to prove that if $\begin{pmatrix}p & 0\end{pmatrix}:X\oplus Y\defl Z$ is a deflation, then so is $p$. Considering the conflation
$$ 0 \to K \overset{\smallpmatrix{u \\ v}}\la X\oplus Y \overset{\smallpmatrix{p & 0}}\la Z \to 0 $$
and the fact that the composition
$$
Y \overset{\smallpmatrix{0\\ 1}}\la X\oplus Y
\overset{\smallpmatrix{p & 0}}\la Z
$$
vanishes, it follows that there exists a unique morphism $i:Y\la K$ \st $\smallpmatrix{u\\ v} i=\smallpmatrix{0\\ 1}$. That is, $i$ is a section, the cokernel $q: K \la C$ of $i$ is a retraction, and we have $s: C \la K$ \st $qs = 1_C$ and $iv + sq = 1_K$. A short computation gives
$$ vsq = v(1_K - iv) = v - viv = v - 1_Y v = 0. $$
Then also $vs = 0$ since $q$ is an epimorphism. It follows by a standard argument that $\begin{pmatrix}s & i\end{pmatrix}: C \oplus Y \la K$ is an isomorphism and if we put $k = u \circ s$, we have a conflation
$$
0 \to
C \oplus Y \overset{\smallpmatrix{k & 0 \\ 0 & 1}}\la
X \oplus Y \overset{\smallpmatrix{p & 0}}\la
Z \to
0.
$$
By constructing a pushout of this conflation along the projection $C \oplus Y \defl C$, we get the following commutative diagram whose lower row is a conflation:
$$
\xymatrix{
0 \ar[r] &
C \oplus Y \ar[r]^{\smallpmatrix{k & 0 \\ 0 & 1}} \ar[d]_{\smallpmatrix{1 & 0}} &
X \oplus Y \ar[r]^{\smallpmatrix{p & 0}} \ar[d]_{\smallpmatrix{1 & 0}} &
Z \ar[r] \ar@{=}[d] &
0
\\
0 \ar[r] &
C \ar[r]^{k} &
X \ar[r]^{p} &
Z \ar[r] &
0
}
$$
Hence, $p$ is a deflation as desired.
\end{proof}

Before pushing this idea further, let us give a definition:

\begin{defi} \label{def:weakly terminal subfamily}
Let $(f_i:X_i\la Y)_{i\in I}$ be a family of morphisms in $\C$ with the same codomain. A subfamily $(f_j)_{j\in J}$ indexed by a subset $J \subseteq I$ is called {\bf weakly terminal} if for every index $i\in I$, there exists an index $j\in J$ such that $f_i$ factors through $f_j$.
\end{defi}

\begin{rem} \label{rem:weakly terminal subfamily}
The terminology comes from the fact that, if we consider $\clS = \{f_i \mid i \in I\}$ as a full subcategory of the comma category $\C/Y$, then the condition of Definition~\ref{def:weakly terminal subfamily} precisely says that $\{f_j \mid j \in J\}$ is a weakly terminal set of objects of $\clS$.
\end{rem}

\begin{lemma} \label{lem:weakly terminal deflation}
Suppose that $\C$ has arbitrary coproducts and every section in $\C$ has a cokernel. Suppose also that $(f_j:X_j\la Y)_{j\in J}$ is a weakly terminal subfamily of
a family of morphisms $(f_i:X_i\la Y)_{i\in I}$. Then the morphism
$$ (f_i): \coprod_{i\in I} X_i \la Y $$
is a deflation \iff the following morphism is a deflation
$$ (f_j): \coprod_{j\in J} X_j \la Y. $$
\end{lemma}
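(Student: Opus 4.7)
My plan is to reduce both directions to the two preceding lemmas by decomposing $\coprod_{i \in I} X_i$ as $\coprod_{j \in J} X_j \oplus \coprod_{k \in I \setminus J} X_k$, a decomposition that follows from the standard associativity of coproducts.

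The ``if'' direction will be immediate. The canonical split inclusion $\iota: \coprod_{j \in J} X_j \hookrightarrow \coprod_{i \in I} X_i$ satisfies $(f_i) \circ \iota = (f_j)$, so under the decomposition above the morphism $(f_i)$ has the matrix form $\begin{pmatrix}(f_j) & (f_k)_{k \in I \setminus J}\end{pmatrix}$. Assuming $(f_j)$ is a deflation, Lemma~\ref{lem:adding a deflation} then gives that $(f_i)$ is a deflation.

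For the ``only if'' direction, I would exploit the weakly terminal property to express the block $(f_k)_{k \in I \setminus J}$ as $(f_j) \circ v$ for some $v$. Concretely, for each $k \in I \setminus J$ choose $j(k) \in J$ and $u_k : X_k \to X_{j(k)}$ with $f_k = f_{j(k)} u_k$; assembling the $u_k$ via the universal property of the coproduct yields a morphism $v: \coprod_{k \in I \setminus J} X_k \to \coprod_{j \in J} X_j$ with $(f_j) \circ v = (f_k)_{k \in I \setminus J}$. Hence
\[
(f_i) = \begin{pmatrix}(f_j) & (f_j)\circ v\end{pmatrix}: \coprod_{j \in J} X_j \oplus \coprod_{k \in I \setminus J} X_k \la Y,
\]
which is exactly the form to which Lemma~\ref{lem:(p,pu) deflation implies f deflation} applies (with $p = (f_j)$, $u = v$). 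Since we have the standing assumption that every section in $\C$ has a cokernel, that lemma delivers the conclusion that $(f_j)$ is a deflation.

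No step looks difficult here; the only mild subtlety is making the choice of the representatives $j(k)$ and the factorizing morphisms $u_k$ cleanly (pure axiom of choice, no set-theoretic obstructions since $I$ is a set) and noting that the case $J = I$ is trivial while the case of empty $I \setminus J$ uses only the fact that the empty coproduct is the zero object, so the ``$\oplus B$'' block disappears and the claim is tautological.
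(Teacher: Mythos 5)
Your proof is correct and follows essentially the same route as the paper's: decompose $\coprod_{i\in I}X_i$ as $\coprod_{j\in J}X_j\oplus\coprod_{k\in I\setminus J}X_k$, use weak terminality to factor the second block as $(f_j)\circ v$, and apply Lemma~\ref{lem:(p,pu) deflation implies f deflation}. The only (immaterial) difference is that the paper obtains the easy direction from the ``if'' part of that same lemma, whereas you invoke Lemma~\ref{lem:adding a deflation} directly.
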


\begin{proof}
For each subset $K\subseteq I$, let us put $X_K = \coprod_{i\in K} X_i$ and denote by $f_K: X_K\la Y$ the morphism such that $f_K\lambda_i=f_i$ for each $i\in K$, where $\lambda_i:X_i\la X_K$ denotes the coproduct inclusion.

By hypothesis, for each $i\in I\setminus J$ there exist an index $j\in J$ and a morphism $v_i:X_i\longrightarrow X_j$ \st $f_j v_i=f_i$. We shall denote by $u_i$ the composition
$$ X_i\overset{v_i}\la X_j\overset{\lambda_j}\la X_J $$
and consider the morphism $u:X_{I\setminus J}\la X_J$ determined by $u\lambda_i=u_i$, for all $i\in I\setminus J$. An easy computation reveals the equalities
$$ f_Ju\lambda_i = f_Ju_i = f_J\lambda_jv_i = f_jv_i = f_i = f_{I \setminus J} \lambda_i $$
for all $i\in I\setminus J$, so $f_Ju=f_{I\setminus J}$. Finally, we invoke Lemma~\ref{lem:(p,pu) deflation implies f deflation} which says that $f_I=\begin{pmatrix}f_J & f_Ju\end{pmatrix}: X_J\oplus X_{I\setminus J} \la Y$ is a deflation \iff $f_J: X_J \la Y$ is a deflation.
\end{proof}

We conclude with a proposition which says that coproducts and transfinite composition of inflations are exact in $\C$ provided we have enough injectives and cokernels of sections. Compare this to Setup~\ref{setup:nice exact category} and Lemma~\ref{lem:existence of exact coproducts}.

\begin{prop} \label{prop:exact category with enough injectives}
Suppose that $\mathcal{C}$ has enough injectives and that every section in $\mathcal{C}$  has a cokernel. Then the following assertions hold:
\begin{enumerate}
\item A morphism $f:X\la Y$ is an inflation \iff the map $\Hom_\C(f,I): \Hom_\C(Y,I)\la \Hom_\C(X,I)$ is an epimorphism of abelian groups for every injective object $I$.

\item If $[(X_\alpha)_{\alpha <\lambda},(i_{\alpha\beta})_{\alpha <\beta <\lambda}]$ is a $\lambda$-sequence of inflations having a composition $X_0 \la \colim_{\alpha < \lambda} X_\alpha$ in $\C$, then the composition is an inflation.

\item If $(0\to X_i\overset{f_i}\la Y_i\overset{g_i}\la Z_i\to 0)_{i\in I}$ is a family of conflations having a coproduct, then
$$
0 \la
\coprod_{i \in I} X_i \xrightarrow{\coprod f_i}
\coprod_{i \in I} Y_i \xrightarrow{\coprod g_i}
\coprod_{i \in I} Z_i \la
0
$$
is a conflation.
\end{enumerate}
\end{prop}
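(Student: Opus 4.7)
The plan is to establish part (1) first and then derive (2) and (3) formally from it. The ``only if'' direction of (1) is immediate: applying $\Hom_\C(-,I)$ to a conflation beginning with an inflation $f$ produces the required surjection on $\Hom$-groups, since $\Ext_\C^1(-,I)=0$ for any injective $I$.

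For the ``if'' direction of (1), fix $f: X \la Y$ making $\Hom_\C(f,I)$ surjective for every injective $I$. First I would embed $X \infl I$ into an injective via an inflation $j$ (using enough injectives) and lift $j$ to a morphism $g: Y \la I$ with $gf = j$ using the hypothesis. Next, form the pushout of $j$ along $f$, obtaining an inflation $j': Y \infl P$, a morphism $f': I \la P$, and, by the universal property of the pushout, a unique factorization $h: P \la I$ satisfying $hj' = g$ and $hf' = 1_I$. Thus $f'$ is a section of $h$; since sections have cokernels by hypothesis (equivalently, retractions have kernels), $h$ is a deflation with a kernel $K$, and the corresponding split conflation yields a decomposition $P \cong K \oplus I$. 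Write $\pi: P \la K$ for the projection complementary to $f'$.

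The central step is then to exploit the standard Mayer--Vietoris conflation produced by the pushout, $0 \to X \overset{\smallpmatrix{f \\ -j}}\la Y \oplus I \overset{(j',\,f')}\la P \to 0$, and transport it by the automorphism $\phi = \smallpmatrix{1_Y & 0 \\ g & 1_I}$ of $Y \oplus I$. Exploiting $gf - j = 0$ together with the splitting $P \cong K \oplus I$---in which $j'$ has components $(\pi j', g)$ and $f'$ has components $(0, 1_I)$---a routine matrix computation identifies the transported conflation as the direct sum of $0 \to X \overset{f}\la Y \overset{\pi j'}\la K \to 0$ and the trivial conflation $0 \to 0 \la I \overset{1_I}\la I \to 0$. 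Since retracts of conflations are conflations (a standard consequence of the axioms), $f$ is an inflation.

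Parts (2) and (3) will follow formally by invoking (1). For (2), let $\iota: X_0 \la \colim_{\alpha<\lambda} X_\alpha$ be the composition. Given any $\alpha_0: X_0 \la I$ into an injective, I would extend it to a compatible family $(\alpha_\beta: X_\beta \la I)_{\beta<\lambda}$ by transfinite induction: injectivity of $I$ handles successor steps (using that $X_\beta \infl X_{\beta+1}$ is an inflation), while the defining colimit property of a $\lambda$-sequence handles limit steps. The universal property of the colimit then yields the desired extension $\colim X_\alpha \la I$, so $\Hom_\C(\iota, I)$ is surjective and (1) gives the conclusion. For (3), the morphism $\coprod f_i$ is an inflation by (1), since $\Hom_\C(\coprod f_i, I) = \prod \Hom_\C(f_i, I)$ is a product of surjections; and because cokernels commute with coproducts, $\coprod g_i$ is the cokernel of $\coprod f_i$, making the coproduct sequence a conflation. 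The main obstacle throughout is the matrix manipulation in (1): the raw $\Hom$-surjectivity hypothesis produces no conflation \emph{a priori}, and the pushout construction together with the decomposition of the retraction $h$ is needed to extract one, with the splitting $P \cong K \oplus I$ depending crucially on the hypothesis that sections have cokernels.
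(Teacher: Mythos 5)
Your proof is correct and follows the same strategy as the paper's: establish (1) by testing against injectives, then derive (2) by a transfinite extension argument and (3) by exactness of products of abelian groups. Your explicit pushout-and-splitting computation for the ``if'' direction of (1) is essentially an unrolling of the paper's two-line appeal to the duals of Lemma~\ref{lem:adding a deflation} and Lemma~\ref{lem:(p,pu) deflation implies f deflation}; the only imprecision is your parenthetical claim that ``retracts of conflations are conflations'' is a standard consequence of the axioms --- in a general exact category it is not, but it does hold under weak idempotent completeness, which is exactly the standing hypothesis that every section has a cokernel (and is what Lemma~\ref{lem:(p,pu) deflation implies f deflation} encapsulates).
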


\begin{proof}
(1) The ``only if'' part of the statement is clear. For the converse, fix any inflation $j:X\infl I$, with $I$ injective. Using the assumption that $\Hom_\C(f,I): \Hom_\C(Y,I)\la \Hom_\C(X,I)$ is surjective, choose a morphism $g:Y\la I$ such that $gf=j$. By the dual of Lemma~\ref{lem:adding a deflation}, the morphism $\smallpmatrix{f \\ gf}: X\la Y\oplus I$ is an inflation. Then the dual of Lemma \ref{lem:(p,pu) deflation implies f deflation} tells us that $f$ is an inflation.

(2) Suppose that $[(X_\alpha )_{\alpha<\lambda},(i_{\alpha\beta}:X_\alpha\la X_\beta )_{\alpha<\beta <\lambda}]$ is a $\lambda$-sequence in $\C$ having a composition $f: X_0 \la X$. If $I$ is an injective object of $\C$, then
$$
\left[
\big( \Hom_\C(X_\alpha,I) \big)_{\alpha <\lambda},
\big( \Hom_\C(i_{\alpha\beta},I): \Hom_\C(X_\beta,I) \la \Hom_\C(X_\alpha,I) \big)_{\alpha <\beta <\lambda}
\right]
$$
is a continuous well-ordered inverse system of epimorphisms of abelian groups. Moreover, the limit of the inverse system is $\Hom_\C(X,I)$ and
$$ \Hom_\C(f,I): \Hom_\C(X,I) \la \Hom_\C(X_0,I) $$
is the corresponding limit morphism. It is not difficult to prove that $\Hom_\C(f,I)$ is surjective. Namely, given $x_0 \in \Hom_\C(X_0,I)$, one inductively constructs a sequence $(x_\alpha)_{\alpha<\lambda}$ \st $x_\alpha \in \Hom_\C(X_\alpha,I)$ and $x_\alpha = x_\beta \circ i_{\alpha\beta}$ for each $\alpha<\beta<\lambda$. The colimit property in $\C$ gives us $x = \colim_{\alpha<\lambda} x_\alpha \in \Hom_\C(X,I)$ \st $x_0 = x \circ f$. Finally, assertion (1) implies that $f: X_0 \la X$ is an inflation.

(3) One readily sees that $\coprod g_i:\coprod Y_i\la \coprod Z_i$ is the cokernel map of $\coprod f_i$. The proof is whence reduced to check that $\coprod f_i$ is an inflation. But this is a direct consequence of assertion (1) bearing in mind that $\Hom_\C(-,I)$ takes coproducts in $\C$ to products of abelian groups, and products of abelian groups are exact.
\end{proof}

\bibliographystyle{abbrv}
\bibliography{exact_catg_bib}

\begin{thebibliography}{10}

\bibitem{AR}
J.~Ad{\'a}mek and J.~Rosick{\'y}.
\newblock {\em Locally presentable and accessible categories}, volume 189 of
  {\em London Mathematical Society Lecture Note Series}.
\newblock Cambridge University Press, Cambridge, 1994.

\bibitem{AHK}
L.~Angeleri~H{\"u}gel, D.~Happel, and H.~Krause, editors.
\newblock {\em Handbook of tilting theory}, volume 332 of {\em London
  Mathematical Society Lecture Note Series}.
\newblock Cambridge University Press, Cambridge, 2007.

\bibitem{BBD}
A.~A. Be{\u\i}linson, J.~Bernstein, and P.~Deligne.
\newblock Faisceaux pervers.
\newblock In {\em Analysis and topology on singular spaces, {I} ({L}uminy,
  1981)}, volume 100 of {\em Ast\'erisque}, pages 5--171. Soc. Math. France,
  Paris, 1982.

\bibitem{Bel}
A.~Beligiannis.
\newblock Cohen-{M}acaulay modules, (co)torsion pairs and virtually
  {G}orenstein algebras.
\newblock {\em J. Algebra}, 288(1):137--211, 2005.

\bibitem{BR}
A.~Beligiannis and I.~Reiten.
\newblock Homological and homotopical aspects of torsion theories.
\newblock {\em Mem. Amer. Math. Soc.}, 188(883):viii+207, 2007.

\bibitem{BIK}
D.~Benson, S.~B. Iyengar, and H.~Krause.
\newblock Local cohomology and support for triangulated categories.
\newblock {\em Ann. Sci. \'Ec. Norm. Sup\'er. (4)}, 41(4):573--619, 2008.

\bibitem{BEE}
L.~Bican, R.~El~Bashir, and E.~Enochs.
\newblock All modules have flat covers.
\newblock {\em Bull. London Math. Soc.}, 33(4):385--390, 2001.

\bibitem{BO}
A.~I. Bondal and D.~O. Orlov.
\newblock Semiorthogonal decomposition for algebraic varieties.
\newblock Preprint, arXiv:alg-geom/9506012v1, 1995.

\bibitem{B}
M.~V. Bondarko.
\newblock Weight structures and motives; comotives, coniveau and {C}how-weight
  spectral sequences: a survey.
\newblock Preprint, arXiv:0903.0091v2, 2009.

\bibitem{B2}
M.~V. Bondarko.
\newblock Weight structures vs. t-structures; weight filtrations, spectral
  sequences, and complexes (for motives and in general).
\newblock To appear in the Journal of K-theory, arXiv:0704.4003v6, 2009.

\bibitem{EBIJR}
D.~Bravo, E.~E. Enochs, A.~C. Iacob, O.~M.~G. Jenda, and J.~Rada.
\newblock Cotorsion pairs in {C}({R}-{M}od).
\newblock Preprint, 2010.

\bibitem{Bu}
T.~B{\"u}hler.
\newblock Exact categories.
\newblock {\em Expo. Math.}, 28(1):1--69, 2010.

\bibitem{CB}
W.~Crawley-Boevey.
\newblock Locally finitely presented additive categories.
\newblock {\em Comm. Algebra}, 22(5):1641--1674, 1994.

\bibitem{Dr}
V.~Drinfeld.
\newblock Infinite-dimensional vector bundles in algebraic geometry: an
  introduction.
\newblock In {\em The unity of mathematics}, volume 244 of {\em Progr. Math.},
  pages 263--304. Birkh\"auser Boston, Boston, MA, 2006.

\bibitem{ElB}
R.~El~Bashir.
\newblock Covers and directed colimits.
\newblock {\em Algebr. Represent. Theory}, 9(5):423--430, 2006.

\bibitem{EGPT}
S.~Estrada, P.~Guil~Asensio, M.~Prest, and J.~Trlifaj.
\newblock Model category structures arising from {D}rinfeld vector bundles.
\newblock Preprint, arXiv:0906.5213v1, 2009.

\bibitem{Fr}
P.~Freyd.
\newblock {\em Abelian categories. {A}n introduction to the theory of
  functors}.
\newblock Harper's Series in Modern Mathematics. Harper \& Row Publishers, New
  York, 1964.

\bibitem{G3}
J.~Gillespie.
\newblock The flat model structure on {${\rm Ch}(R)$}.
\newblock {\em Trans. Amer. Math. Soc.}, 356(8):3369--3390 (electronic), 2004.

\bibitem{G2}
J.~Gillespie.
\newblock The flat model structure on complexes of sheaves.
\newblock {\em Trans. Amer. Math. Soc.}, 358(7):2855--2874 (electronic), 2006.

\bibitem{G1}
J.~Gillespie.
\newblock Kaplansky classes and derived categories.
\newblock {\em Math. Z.}, 257(4):811--843, 2007.

\bibitem{GT}
R.~G{\"o}bel and J.~Trlifaj.
\newblock {\em Approximations and endomorphism algebras of modules}, volume~41
  of {\em de Gruyter Expositions in Mathematics}.
\newblock Walter de Gruyter GmbH \& Co. KG, Berlin, 2006.

\bibitem{GW}
U.~G{\"o}rtz and T.~Wedhorn.
\newblock {\em Algebraic geometry {I}}.
\newblock Advanced Lectures in Mathematics. Vieweg + Teubner, Wiesbaden, 2010.
\newblock Schemes with examples and exercises.

\bibitem{Ha}
D.~Happel.
\newblock {\em Triangulated categories in the representation theory of
  finite-dimensional algebras}, volume 119 of {\em London Mathematical Society
  Lecture Note Series}.
\newblock Cambridge University Press, Cambridge, 1988.

\bibitem{HRS}
D.~Happel, I.~Reiten, and S.~O. Smal{\o}.
\newblock Tilting in abelian categories and quasitilted algebras.
\newblock {\em Mem. Amer. Math. Soc.}, 120(575):viii+ 88, 1996.

\bibitem{Hart}
R.~Hartshorne.
\newblock {\em Residues and duality}.
\newblock Lecture notes of a seminar on the work of A. Grothendieck, given at
  Harvard 1963/64. With an appendix by P. Deligne. Lecture Notes in
  Mathematics, No. 20. Springer-Verlag, Berlin, 1966.

\bibitem{Hart2}
R.~Hartshorne.
\newblock {\em Algebraic geometry}.
\newblock Springer-Verlag, New York, 1977.
\newblock Graduate Texts in Mathematics, No. 52.

\bibitem{Hir}
P.~S. Hirschhorn.
\newblock {\em Model categories and their localizations}, volume~99 of {\em
  Mathematical Surveys and Monographs}.
\newblock American Mathematical Society, Providence, RI, 2003.

\bibitem{H2}
M.~Hovey.
\newblock {\em Model categories}, volume~63 of {\em Mathematical Surveys and
  Monographs}.
\newblock American Mathematical Society, Providence, RI, 1999.

\bibitem{H1}
M.~Hovey.
\newblock Cotorsion pairs, model category structures, and representation
  theory.
\newblock {\em Math. Z.}, 241(3):553--592, 2002.

\bibitem{H3}
M.~Hovey.
\newblock Cotorsion pairs and model categories.
\newblock In {\em Interactions between homotopy theory and algebra}, volume 436
  of {\em Contemp. Math.}, pages 277--296. Amer. Math. Soc., Providence, RI,
  2007.

\bibitem{Jech}
T.~Jech.
\newblock {\em Set theory}.
\newblock Springer Monographs in Mathematics. Springer-Verlag, Berlin, 2003.
\newblock The third millennium edition, revised and expanded.

\bibitem{Jo}
P.~J{\o}rgensen.
\newblock The homotopy category of complexes of projective modules.
\newblock {\em Adv. Math.}, 193(1):223--232, 2005.

\bibitem{Kst}
B.~Keller.
\newblock Chain complexes and stable categories.
\newblock {\em Manuscripta Math.}, 67(4):379--417, 1990.

\bibitem{K}
B.~Keller.
\newblock On differential graded categories.
\newblock In {\em International {C}ongress of {M}athematicians. {V}ol. {II}},
  pages 151--190. Eur. Math. Soc., Z\"urich, 2006.

\bibitem{KV}
B.~Keller and D.~Vossieck.
\newblock Aisles in derived categories.
\newblock {\em Bull. Soc. Math. Belg. S\'er. A}, 40(2):239--253, 1988.
\newblock Deuxi{\`e}me Contact Franco-Belge en Alg{\`e}bre (Faulx-les-Tombes,
  1987).

\bibitem{Kr4}
H.~Krause.
\newblock Smashing subcategories and the telescope conjecture---an algebraic
  approach.
\newblock {\em Invent. Math.}, 139(1):99--133, 2000.

\bibitem{Kr2}
H.~Krause.
\newblock On {N}eeman's well generated triangulated categories.
\newblock {\em Doc. Math.}, 6:121--126 (electronic), 2001.

\bibitem{Kr5}
H.~Krause.
\newblock The stable derived category of a {N}oetherian scheme.
\newblock {\em Compos. Math.}, 141(5):1128--1162, 2005.

\bibitem{Kr}
H.~Krause.
\newblock Derived categories, resolutions, and {B}rown representability.
\newblock In {\em Interactions between homotopy theory and algebra}, volume 436
  of {\em Contemp. Math.}, pages 101--139. Amer. Math. Soc., Providence, RI,
  2007.

\bibitem{Kr6}
H.~Krause.
\newblock Approximations and adjoints in homotopy categories.
\newblock Preprint, arXiv:1005.0209v2, 2010.

\bibitem{Kr3}
H.~Krause.
\newblock Localization theory for triangulated categories.
\newblock In {\em Triangulated categories}, volume 375 of {\em London Math.
  Soc. Lecture Note Ser.}, pages 161--235. Cambridge Univ. Press, Cambridge,
  2010.

\bibitem{KS}
H.~Krause and {\O}.~Solberg.
\newblock Applications of cotorsion pairs.
\newblock {\em J. London Math. Soc. (2)}, 68(3):631--650, 2003.

\bibitem{McL}
S.~Mac~Lane.
\newblock {\em Homology}.
\newblock Die Grundlehren der mathematischen Wissenschaften, Bd. 114. Academic
  Press Inc., Publishers, New York, 1963.

\bibitem{MaPa}
M.~Makkai and R.~Par{\'e}.
\newblock {\em Accessible categories: the foundations of categorical model
  theory}, volume 104 of {\em Contemporary Mathematics}.
\newblock American Mathematical Society, Providence, RI, 1989.

\bibitem{Mat}
H.~Matsumura.
\newblock {\em Commutative ring theory}, volume~8 of {\em Cambridge Studies in
  Advanced Mathematics}.
\newblock Cambridge University Press, Cambridge, second edition, 1989.
\newblock Translated from the Japanese by M. Reid.

\bibitem{MSSS}
O.~Mendoza, E.~C. Saenz, V.~Santiago, and M.~J. Souto~Salorio.
\newblock {A}uslander-{B}uchweitz context and co-t-structures.
\newblock Preprint, arXiv:1002.4604v2, 2010.

\bibitem{Mur}
D.~Murfet.
\newblock {\em The Mock Homotopy Category of Projectives and {G}rothendieck
  Duality}.
\newblock PhD thesis, Australian National University, 2007.
\newblock Available at http://www.therisingsea.org/thesis.pdf.

\bibitem{Nee2}
A.~Neeman.
\newblock {\em Triangulated categories}, volume 148 of {\em Annals of
  Mathematics Studies}.
\newblock Princeton University Press, Princeton, NJ, 2001.

\bibitem{Nee4}
A.~Neeman.
\newblock Dualizing complexes---the modern way.
\newblock Preprint, available at http://www.crm.es/Publications/08/Pr840.pdf,
  2008.

\bibitem{Nee3}
A.~Neeman.
\newblock The homotopy category of flat modules, and {G}rothendieck duality.
\newblock {\em Invent. Math.}, 174(2):255--308, 2008.

\bibitem{Nee}
A.~Neeman.
\newblock Some adjoints in homotopy categories.
\newblock {\em Ann. of Math. (2)}, 171(3):2143--2155, 2010.

\bibitem{NS}
P.~Nicol{\'a}s and M.~Saor{\'{\i}}n.
\newblock Parametrizing recollement data for triangulated categories.
\newblock {\em J. Algebra}, 322(4):1220--1250, 2009.

\bibitem{Pauk}
D.~Pauksztello.
\newblock Compact corigid objects in triangulated categories and
  co-{$t$}-structures.
\newblock {\em Cent. Eur. J. Math.}, 6(1):25--42, 2008.

\bibitem{Q}
D.~Quillen.
\newblock Higher algebraic {$K$}-theory. {I}.
\newblock In {\em Algebraic {$K$}-theory, {I}: {H}igher {$K$}-theories ({P}roc.
  {C}onf., {B}attelle {M}emorial {I}nst., {S}eattle, {W}ash., 1972)}, pages
  85--147. Lecture Notes in Math., Vol. 341. Springer, Berlin, 1973.

\bibitem{RG}
M.~Raynaud and L.~Gruson.
\newblock Crit\`eres de platitude et de projectivit\'e. {T}echniques de
  ``platification'' d'un module.
\newblock {\em Invent. Math.}, 13:1--89, 1971.

\bibitem{Ro}
J.~Rosick{\'y}.
\newblock Flat covers and factorizations.
\newblock {\em J. Algebra}, 253(1):1--13, 2002.

\bibitem{Sal}
L.~Salce.
\newblock Cotorsion theories for abelian groups.
\newblock In {\em Symposia {M}athematica, {V}ol. {XXIII} ({C}onf. {A}belian
  {G}roups and their {R}elationship to the {T}heory of {M}odules, {INDAM},
  {R}ome, 1977)}, pages 11--32. Academic Press, London, 1979.

\bibitem{St2}
J.~{\v{S}}{\v{t}}ov{\'{\i}}{\v{c}}ek.
\newblock Deconstructibility and {H}ill lemma in {G}rothendieck categories.
\newblock To appear in Forum Math., arXiv:1005.3251v2, 2010.

\bibitem{St}
J.~{\v{S}}{\v{t}}ov{\'{\i}}{\v{c}}ek.
\newblock Locally well generated homotopy categories of complexes.
\newblock {\em Doc. Math.}, 15:507--525, 2010.

\bibitem{TT}
R.~W. Thomason and T.~Trobaugh.
\newblock Higher algebraic {$K$}-theory of schemes and of derived categories.
\newblock In {\em The {G}rothendieck {F}estschrift, {V}ol.\ {III}}, volume~88
  of {\em Progr. Math.}, pages 247--435. Birkh\"auser Boston, Boston, MA, 1990.

\end{thebibliography}

\end{document}